\algrenewcommand\algorithmicrequire{\textbf{Input:}}
\algrenewcommand\algorithmicensure{\textbf{Output:}}
\renewcommand{\bar}{\overline}
\renewcommand{\hat}{\widehat}
\newcommand{\bB}{\mathbb{B}}
\newcommand{\bR}{\mathbb{R}}
\newcommand{\bN}{\mathbb{N}}
\newcommand{\R}{\bR}
\newcommand{\eR}{{\overline\bR}}
\newcommand{\cA}{\mathcal{A}}
\newcommand{\cB}{\mathcal{B}}
\newcommand{\cP}{\mathcal{P}}
\newcommand{\cQ}{\mathcal{Q}}
\newcommand{\cD}{{\mathcal{D}}}
\newcommand{\cH}{{\mathcal{H}}}
\newcommand{\cK}{{\mathcal{K}}}
\newcommand{\cG}{\mathcal{G}}
\newcommand{\cN}{\mathcal{N}}
\newcommand{\cC}{\mathcal{C}}
\newcommand{\cU}{\mathcal{U}}
\newcommand{\cM}{\mathcal{M}}
\DeclareMathOperator{\epi}{epi}
\DeclareMathOperator*{\argmin}{arg\,min}
\DeclareMathOperator{\intr}{int}
\DeclareMathOperator{\bdry}{bdry}
\DeclareMathOperator{\cl}{cl}
\newcommand{\ip}[2]{\left\langle #1,\, #2\right\rangle}
\newcommand{\dom}[1]{\mathrm{dom}\left(#1\right)}
\newcommand{\dist}[2]{\mbox{dist}\left(#1\,\left|\, #2\right.\right)}
\newcommand{\bset}[2]{\left\{#1\,\left|\, #2\right.\right\}}
\newcommand{\ran}[1]{\mathrm{Ran}\left(#1\right)}
\newcommand{\Ran}[1]{\mathrm{Ran}\left(#1\right)}
\newcommand{\Null}[1]{\mathrm{Null}\left(#1\right)}
\newcommand{\diag}{\mathrm{diag}}
\newcommand{\ncone}[2]{N\left(#1\, |\, #2\right)}
\newcommand{\tcone}[2]{T\left(#1\, |\, #2\right)}
\newcommand{\ri}[1]{\mathrm{ri}\left(#1\right)}
\newcommand{\parr}[1]{\mathrm{par}\left(#1\right)}
\DeclareMathOperator{\aff}{aff}
\DeclareMathOperator{\gph}{gph}
\newcommand{\sd}{\partial}
\newcommand{\hsd}{{\hat\partial}}
\newcommand{\hmu}{{\hat \mu}}
\newcommand{\eps}{\epsilon}
\newcommand{\bc}{\bar c}
\newcommand{\bd}{{\bar d}}
\newcommand{\by}{\bar y}
\newcommand{\bk}{\bar k}
\newcommand{\bx}{\bar x}
\newcommand{\bv}{\bar v}
\newcommand{\bmu}{{\bar\mu}}
\newcommand{\hx}{{\hat x}}
\newcommand{\hc}{{\hat c}}
\newcommand{\hy}{{\hat y}}
\newtheorem{theorem}{Theorem}[section]
\newtheorem{corollary}{Corollary}[section]
\newtheorem{definition}{Definition}[section]
\newtheorem{lemma}{Lemma}[section]
\newtheorem{proposition}{Proposition}[section]
\newtheorem{example}{Example}
\theoremstyle{definition}
\newtheorem{remark}{Remark}
\newtheorem{assumptions}{Assumption}
\algnewcommand\algorithmicinput{\textbf{Initialize:}}
\algnewcommand\Initialize{\item[\algorithmicinput]}
\algnewcommand\algorithmicstepk{\textbf{Step }k\textbf{:}}
\algnewcommand\Step{\item[\algorithmicstepk]}
\newcommand{\calR}{\mathcal{R}}
\begin{document}
\title[PLQ Convex-Composite]{Strong Metric (Sub)regularity of KKT Mappings for Piecewise Linear-Quadratic Convex-Composite Optimization}
\author{J.~V.~Burke}
\thanks{Department of Mathematics, University of Washington, Seattle, WA. \texttt{\{jvburke,aengle2\}@uw.edu}. Supported in part by the U.S. National Science Foundation grant DMS-1514559.}
\author{A.~Engle}
\keywords{Convex-composite optimization, generalized equations, Newton's method, quasi-Newton methods, partial smoothness and active manifold identification, piecewise linear-quadratic, strong metric subregularity, strong metric regularity}
\begin{abstract}
\noindent 
This work concerns the local convergence theory of Newton and quasi-Newton methods for \emph{convex-composite} optimization: minimize 
$f(x):=h(c(x))$, where $h$ is an infinite-valued proper convex function and $c$ is $\cC^2$-smooth. We focus on the case where $h$ is infinite-valued piecewise linear-quadratic and convex. Such problems include nonlinear programming, mini-max optimization, 
estimation of nonlinear dynamics with non-Gaussian noise as well as many modern approaches to large-scale data analysis and machine learning. Our approach embeds the optimality conditions for convex-composite optimization problems into a generalized equation. We establish conditions for strong metric subregularity and strong metric regularity of the corresponding set-valued mappings. This allows us to extend classical convergence of Newton and quasi-Newton methods to the broader class of non-finite valued piecewise linear-\emph{quadratic} convex-composite optimization problems. In particular we establish local quadratic convergence of the Newton method under conditions that parallel those in nonlinear programming when $h$ is non-finite valued piecewise linear. 
\end{abstract}
\maketitle 

\section{Introduction}
This work concerns local convergence theory of Newton and quasi-Newton methods for the solution of the \emph{convex-composite} problem:
\begin{mini}
	{x\in\R^n}{f(x):=h(c(x)),}{\tag{$\mathbf{P}$}}{\label{theprogram}}
\end{mini}
where $h:\R^m\to\R\cup\set{+\infty}$ is piecewise linear-quadratic (PLQ) and convex, and $c:\R^n\to\R^m$ is $\cC^2$-smooth. When $h=\frac{1}{2}\norm{\cdot}^2$, \ref{theprogram} is the classical
nonlinear least-squares problem.
Numerous other problems fall within this class including nonlinear programming (NLP), mini-max optimization, 
estimation of nonlinear dynamics with non-Gaussian noise
as well as many modern approaches to large-scale data analysis and machine learning \cite{aravkin2013sparse,aravkin2014optimization, davis2017nonsmooth}.
Convex-composite optimization has a long history with 
investigations in the 1970s \cite{powell1978algorithms, powell1978fast}, 1980s \cite{burke1985descent,burke1987second,kawasaki1988second,rockafellar1985extensions,rockafellar1989second, wright1987local, yuan1985superlinear}, and 1990s \cite{burke1995gauss, burke1992optimality, deng1996uniqueness, rockafellar_wets_1998}, where much of the emphasis was on a calculus for compositions and its relationship to nonlinear programming (NLP) and exact
penalization \cite{fletcher2013practical}. Recently, there has been a resurgence of interest in local \cite{drusvyatskiy2018error, 2017arXiv170502356D} and global \cite{cui2018composite, davis2018stochastic, drusvyatskiy2018error,drusvyatskiyefficiency, duchi2017stochastic, lewis2016proximal} algorithms for this
class of problems especially with respect to establishing the iteration complexity of first-order methods for \ref{theprogram}. Much of this work has focused on the case where the
function $h$ is finite-valued. 

These, and almost all other methods for solving \ref{theprogram}, use a direction-finding subproblem similar to
\begin{mini}
	{x\in\R^n}{h(c(x^k)+\nabla c(x^k)[x-x^k]) + \frac{1}{2}[x-x^k]^\top H_k[x-x^k],}{\tag{$\mathbf{P}_k$}}{\label{introsubprob}}
\end{mini}
where $H_k$ is the Hessian of a Lagrangian for \ref{theprogram} \cite{burke1987second}. When the Hessian $H_k$ is used in the subproblems, the method corresponds to a Newton method \eqref{eq:geqnewton}, and when $H_k$ is approximated by a matrix $B_k$, it corresponds to a quasi-Newton method \eqref{eq:geqqn}. In either case, the subproblems \ref{introsubprob} may or may not be convex depending on whether $H_k,\,B_k\succeq0$. In the context of the broader class of prox-regular $h$, Lewis and Wright \cite{lewis2016proximal} take $B_k=\mu_kI$ at each iteration, thereby guaranteeing existence and uniqueness of the ``proximal step" and a global descent algorithm. Instead, our focus is on developing methods possessing fast local rates of convergence by taking advantage of second-order information together with the convex geometry of $\dom{h}$ developed by Rockafellar \cite{rockafellar1989second}.

When $h$ is assumed to be a finite-valued piecewise linear convex function, Womersley \cite{womersley1985local} established second-order rates of convergence for these algorithms under conditions comparable to those used in NLP, i.e., linear independence of the active constraint gradients, strict complementarity, and strong second-order sufficiency. Notwithstanding this correspondence to NLP, the method of proof differs significantly from the standard methodology to establishing such results in the NLP case developed by Robinson \cite{Robinson1972, Robinson1974}. Notably, in the case of NLP, the function $h$ is piecewise linear but not finite-valued. In subsequent work, Robinson  \cite{robinson1980strongly} introduced the revolutionary idea of \emph{generalized equations}, whose variational properties can be used to establish local rates of convergence for Newton's method for NLP. By employing the techniques of generalized equations, Cibulka et.~al.~\cite{cibulka2016strong} recently connected classical second-order necessary and sufficient conditions for a local minimizer of \ref{theprogram}
with strong metric subregularity (see \Cref{def:subreg}) of the underlying KKT mapping when $h$ is piecewise linear convex but not necessarily finite-valued. However, their analysis relies heavily on the fact that $h$ is piecewise linear. And so, the old question of what conditions imply local quadratic convergence when $h$ is not piecewise linear remains open. However, their technique created the possibility of an extension to the case where $h$ is a member of the PLQ class. This extension is our goal. It is hoped that the methods and techniques developed in this paper provide insight into how to extend these results beyond the PLQ class.

As noted above, we couch the analysis in the context \emph{Newton's method} for generalized equations. 
The first-order necessary conditions of a local minimum of \ref{theprogram} are encoded through a generalized equation of the form $g(x,y)+G(x,y)\ni0$, where $g:\R^{n+m}\to\R^{n+m}$ is a $\cC^1$-smooth function, $G:\R^{n+m}\rightrightarrows\R^{n+m}$ is a set-valued mapping, $(x,y)$ represents a primal-dual pair, and the function $\nabla g(x,y)$ is a KKT matrix for \ref{theprogram} (see \Cref{def:g+G}). Newton's method \eqref{eq:geqnewton} for solving this generalized equation corresponds to solving the optimality conditions for \ref{introsubprob}. The Newton iterate at $(x^k,y^k)$ is obtained by solving the following linearized generalized equation: 
\begin{equation}\label{eq:introNewton}
\text{Find }(x^{k+1},y^{k+1})\text{ such that }g(x^k,y^k)+\nabla g(x^k,y^k)\begin{pmatrix}
x^{k+1}-x^k\\
y^{k+1}-y^k
\end{pmatrix} + G(x^{k+1},y^{k+1})\ni 0.
\end{equation}
The details of this derivation appear in \Cref{sec:cvxcomp}.

The goal of this paper is to establish local convergence rates for algorithms based on iteratively solving \ref{introsubprob} in the case where $h$ is a PLQ convex function. We do this by augmenting the strategy of Cibulka et.~al.~\cite{cibulka2016strong} with additional innovations by Lewis \cite{lewis2002active} and Rockafellar \cite{rockafellar1989second}. In particular, we are able to establish conditions under which these algorithms are locally quadratically convergent. The first phase of our analysis involves extensive application of the first- and second-order PLQ calculus \cite{ rockafellar1989second,rockafellar_wets_1998} to establish conditions under which the underlying generalized equation is strongly metrically subregular. This allows us to establish sufficient conditions for the superlinear convergence of quasi-Newton methods for algorithms whose direction finding subproblems are based on \ref{introsubprob}. The second phase of our analysis  employs the technique of partly smooth functions in the sense of \cite{hare2004identifying,lewis2002active}
to establish conditions under which a local approximation to the underlying generalized equation is strongly metrically regular (see \Cref{def:mr}). This allows us to give conditions for the local quadratic convergence of the Newton method based on \ref{introsubprob}.

We also note that recent work by Drusvyatskiy and Lewis \cite{drusvyatskiy2018error} considers similar types of results for convex-composite optimization problems of the form $\varphi(x)=h(c(x))+g(x)$, where $h$ is finite-valued and $L-$Lipschitz, $\nabla c$ is $\beta$-Lipschitz, and $g$ is closed, proper, convex, but infinite-valued. One of their goals is to understand the convergence of prox-linear type methods through either the subregularity \cite[Theorems 5.10 and 5.11]{drusvyatskiy2018error} or strong regularity \cite[Theorem 6.2]{drusvyatskiy2018error} of $\sd\varphi$ at \emph{stable} strong minima or sharp minima of $\varphi$ \cite[Theorems 7.1 and 7.2]{drusvyatskiy2018error}.

When $h$ is only assumed to be finite-valued convex and $g$ is zero, the first result on the local quadratic convergence for convex-composite problems was that of Burke and Ferris \cite{burke1995gauss}. In that work, the authors established a constraint qualification for the inclusion $c(\bx)\in\argmin h$ that ensures the local quadratic convergence of constrained Gauss-Newton methods. In \cite{burke1995gauss}, the authors assumed $\argmin h$ was a set of \emph{weak sharp minima} \cite{burke1993weak}. However, it was observed by Li and Wang \cite{li2002convergence} that the sharpness hypothesis was not required. Rather, a local quadratic growth condition \cite[Theorem 2]{li2002convergence} was sufficient for the proof  techniques in \cite{burke1995gauss} to succeed. The authors continued research \cite{li2007majorizing} in relaxations of the constraint qualification on $c(\bx)\in\argmin h $ and studied proximal methods \cite{hu2016convergence} for their convergence.

Our focus on the PLQ class is motivated by the great variety of modern problems in 
data analysis, estimation of dynamical systems, inverse problems, and machine learning that are posed within this class. 
The key to the success of the convex-composite structure is that it separates the data associated to the problem, the function $c$, from the model within which we wish to explore the data, the function $h$. Consequently, the broader the class of functions $h$ available, the greater the variety of ways within which we can explore underlying extremal properties of the input function $c$, e.g., sparsity, robustness, network structure, dynamics, influence of hyperparameters, etc.  Importantly, we have learned that features of the data can be more readily extracted by imposing nonsmoothness in the function $h$.

The roadmap of the paper is as follows. \Cref{sec:notation} collects tools from convex and variational analysis used throughout the paper. \Cref{sec:cvxcomp} formally presents the convex-composite problem class. We take advantage of the structure of the problem class to rewrite the general first-order optimality conditions for proper functions in the presence of various constraint qualifications used in this work. We also present the generalized equation \eqref{kktS} associated with the first-order optimality conditions for \ref{theprogram}. \Cref{sec:geometry} discusses the convex geometry and differential theory of piecewise linear-quadratic functions collected in \cite{rockafellar_wets_1998}. The second-order theory of \cite{rockafellar_wets_1998} allows us to rewrite the general second-order necessary and sufficient conditions for a local minimum of \ref{theprogram}. We extract a crucial result from \cite{rockafellar_wets_1998} that highlights natural candidates for manifolds of partial smoothness \cite{lewis2002active} inherent to the function $h$. \Cref{sec:subreg} extends the result \cite[Theorem 7.1]{cibulka2016strong} relating the strong metric subregularity of \eqref{kktS} to the second-order sufficient conditions of local minima and ends with a convergence study of quasi-Newton methods for \ref{theprogram}. \Cref{sec:partialsmoothness} establishes conditions for the partly smooth structure of PLQ convex functions and sets the stage for  \Cref{sec:metricreg}, where we analyze the local quadratic convergence of Newton's method as in \cite{dontchev2014implicit}.
%

%

\section{Notation}
\label{sec:notation}
These sections summarize the relevant notation and tools of convex and variational analysis used in this work. Unless otherwise stated, we follow the notation in \cite{lewis2002active, rockafellar_wets_1998, dontchev2014implicit}.
\subsection{Preliminaries} 
We work in $(\R^n, \ip{\cdot}{\cdot})$ with the standard inner product $\ip{x}{y}=x^\top y = \sum_{i=1}^n x_iy_i$ and $\norm{x}^2=x^\top x$. Throughout, we switch between the notations $\ip{x}{y}$ and $x^\top y$ for clarity considerations. Let $\bB:=\bset{x\in\R^n}{\norm{x}\le1}$ be the closed unit ball.
For $A\in \R^{m\times n}$, its \emph{range}, \emph{null space}, and \emph{transpose} are $\Ran{A}, \Null{A}, A^\top$ respectively, and for a finite collection of mappings $\set{A_k}_{k\in J}$ with index set $J$, let $\diag{A_k}$ denote the block diagonal matrix with $k$th block $A_k$. Let $e_j\in\R^\ell$ denote the standard unit coordinate vector. 
\subsection{Convex Analysis}
A set $C\subset \R^m$ is \emph{locally closed} at a point $\bc$, not necessarily in $C$, if there exists a closed neighborhood $V$ of $\bc$ such that $C\cap V$ is closed. Any closed set is locally closed at all of its points, and the closure and interior of $C$ is denoted by $\cl{C}$ and $\intr{C}$, respectively.

For a closed convex set $C\subset\R^m$, let $\aff{C}$ denote the \emph{affine hull} of $C$ and $\parr{C}$ the \emph{subspace parallel} to $C$. Then, for any $c\in C$,
\(
\parr{C} := \aff{C} - c = \R(C-C),
\)
where we employ \emph{Minkowski set algebra} for addition of sets: for sets $C_1,C_2\subset\R^m$ and $t\in \R$, define $C+C':=\bset{c+c'}{c\in C,\ c'\in C'}$ and $\Lambda C := \bset{\lambda c}{\lambda\in\Lambda,\ c\in C}$. When $C=\set{c}$, we omit the set braces and write $c+C'$. The \emph{relative interior} of $C$ is given by
\(
\ri{C} = \bset{x \in \aff{C}}{\exists \,(\epsilon>0)\ (x+\epsilon\bB)\cap\aff{C}\subset C}.
\)
\subsection{Variational Analysis}
The functions in this paper take values in the extended reals $\eR:=\R\cup\set{\pm\infty}$. For $f:\R^n\to\eR$, the \emph{domain} of $f$ is $	\dom{f} := \bset{x\in\R^n}{f(x)<\infty}$,
and the \emph{epigraph} of $f$ is $\epi{f} := \bset{(x,\alpha)\in \R^n\times\R}{f(x)\le\alpha}$.
\\
We say $f$ is \emph{closed} if $\epi{f}$ is a closed subset of $\R^{n+1}$, $f$ is \emph{proper} if $\dom{f}\neq\emptyset$ and $f(x)>-\infty$ for all $x\in\R^n$, and $f$ is \emph{convex} if $\epi{f}$ is a convex subset of $\R^{n+1}$. 

Suppose $f:\R^n\to\eR$ is finite at $\bx$ and $w,v\in\R^n$. The \emph{subderivative} $\dif f(\bar{x}):\R^n\to\eR$ and \emph{one-sided directional derivative} $f'(\bx;\cdot)$ at $\bx$ for $w$ are
\begin{align*}
\dif f(\bar{x})(w) &:= \liminf_{\substack{t\searrow0\\ w'\to w}} \frac{f(\bar{x}+t w)-f(\bar{x})}{t}, &
f'(\bx;w) &:= \lim_{t\searrow0}\frac{f(\bx + t w) - f(\bx)}{t}.
\end{align*}
At points $w\in\R^n$ such that $f'(\bx;w)$ exists and is finite, the \emph{one-sided second directional derivative} is 
\begin{equation*}
f''(\bx;w) := \lim_{t\searrow0}\frac{f(\bx + t w) - f(\bx) - t f'(\bx; w)}{\frac{1}{2}t^2}.
\end{equation*}
For any $w,v\in\R^n$, the \emph{second subderivative at $\bx$ for $v$ and $w\in\R^n$} is
\begin{equation*}
\dif{^2} f(\bx|v)(w):=\liminf_{\substack{t\searrow0\\ w'\to w}}\Delta_t^2 f(\bx|v)(w),\text{ where }\Delta_t^2 f(\bx|v)(w):=\frac{f(\bx+t w') - f(\bx) - t\ip{v}{w'}}{\frac{1}{2}t^2}.
\end{equation*}
The structure of our problem class allows the classical one-sided first and second directional derivatives $f'(\bx;\cdot)$ and $f''(\bx;\cdot)$ to entirely capture the variational properties of their more general counterparts.
\\
Suppose $f:\R^n\to\eR$ is finite at $\bx$. Define the \emph{(Fr\'echet) regular subdifferential} \[\hat \sd f(\bx) := \bset{v\in\R^n}{f(x)\ge f(\bx)+\ip{v}{x-\bx} + o(\norm{x-\bx})},\]
and the \emph{(limiting or Mordukhovich) subdifferential} by
\begin{equation}\label{eq:limitingsd}
\sd f(\bx) := \bset{v\in\R^n}{\exists\, (x^n\xrightarrow[f]{}\bx)\ \exists\,(v^n\to v)\ \forall\,(n\in\bN)\ v^n\in \hsd f(x^n)},
\end{equation}
where $x^n\xrightarrow[f]{}\bx$ denotes \emph{$f$-attentive convergence}, i.e., that $x^n\to\bx,\text{ with }f(x^n)\to f(\bx)$. In the case of a closed, proper, convex function $f$, the set $\sd f(\bx)$ is the usual subdifferential of convex analysis.
The tools of first and second subderivative functions and subdifferential sets allow us to concisely write first-order necessary conditions and second-order necessary and sufficient conditions for local minima.
\begin{theorem}[First-order necessity, second-order necessity and sufficiency] \cite[Theorems 10.1, 13.24] {rockafellar_wets_1998} For a proper function $f:\R^n\to\eR$, consider the problem
	\label{thm:fonc}
	\label{thm:sonsc}
	$\min_x f(x)$.
	\begin{enumerate}[label=(\alph*)]
		\item If $f$ has a local minimum at $\bx$, then $0\in \sd f(\bx)$ and for all $w\in\R^n,\ 
		\dif f(\bx)(w)\ge0$ and $
		\dif{^2}f(\bx|0)(w)\ge0$.
		\item If $0\in\sd f(\bx)$ and $\dif{^2} f(\bx|0)(w)>0$ for $w\ne0$, then $\bx$ is a local minimizer of $f$.
		\item The statement $0\in\sd f(\bx)$ and $\dif{^2} f(\bx|0)(w)>0$ for $w\ne0$ is equivalent to $\bx$ being a \emph{strong local minimizer} of $f$, i.e., there exists a neighborhood $U$ of $\bx$ and a constant $\gamma>0$ such that
		\begin{equation}\label{eq:SLM}
		f(x) \ge f(\bar{x}) + \gamma \norm{x-\bar{x}}^2 \text{ for all }x\in U\cap \dom{f}.
		\end{equation}
	\end{enumerate}
\end{theorem}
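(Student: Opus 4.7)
The plan is to treat the three parts as a graduated sequence: (a) is direct from the definitions at a local minimizer, (c) is the main content and rests on a compactness-based contradiction applied to the liminf defining $\dif{^2} f(\bx|0)$, and (b) follows as an immediate consequence of (c).

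For part (a), I would start from the local-minimizer property: there is a neighborhood $U$ of $\bx$ with $f(x) \ge f(\bx)$ for all $x \in U \cap \dom{f}$. Since the inequality $f(x) \ge f(\bx) = f(\bx) + \ip{0}{x-\bx}$ is stronger than any $o(\norm{x-\bx})$ tolerance, this instantly yields $0 \in \hsd f(\bx) \subset \sd f(\bx)$. For the subderivative, $f(\bx + tw') - f(\bx) \ge 0$ whenever $\bx + tw' \in U$, so dividing by $t > 0$ and taking the liminf as $t \searrow 0$ and $w' \to w$ gives $\dif f(\bx)(w) \ge 0$. The same quotient, divided by $t^2/2$ and with $v = 0$ so the linear correction vanishes, gives $\Delta_t^2 f(\bx|0)(w') \ge 0$, hence $\dif{^2} f(\bx|0)(w) \ge 0$.

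For the equivalence in (c), the forward direction is immediate: if $\bx$ is a strong local minimizer with parameter $\gamma > 0$, then for $\bx + tw' \in U$ we have $\Delta_t^2 f(\bx|0)(w') \ge 2\gamma\norm{w'}^2$, so the liminf produces $\dif{^2} f(\bx|0)(w) \ge 2\gamma \norm{w}^2 > 0$ whenever $w \ne 0$, while $0 \in \sd f(\bx)$ follows because a strong local minimum is in particular a local minimum. For the reverse direction I would argue by contraposition: if $\bx$ is not a strong local minimizer, then for each $n \in \bN$ there exists $x^n \in \dom{f}$ with $\norm{x^n - \bx} < 1/n$ and $f(x^n) - f(\bx) < (1/n)\norm{x^n - \bx}^2$. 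Necessarily $x^n \ne \bx$; setting $t_n := \norm{x^n - \bx}$ and $w^n := (x^n - \bx)/t_n$, compactness of the unit sphere lets me pass to a subsequence with $w^n \to w$, $\norm{w} = 1$, along which
$$
\Delta_{t_n}^2 f(\bx|0)(w^n) = \frac{f(x^n) - f(\bx)}{t_n^2/2} < \frac{2}{n} \to 0.
$$
Since the liminf defining $\dif{^2} f(\bx|0)(w)$ is bounded above by the limit along any admissible sequence, this forces $\dif{^2} f(\bx|0)(w) \le 0$, contradicting the hypothesis. Part (b) follows from (c) since a strong local minimum is in particular a local minimum.

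The main obstacle is the contraposition step above: one must align the single-sequence bound extracted from the failure of the strong-minimum inequality with the liminf in the definition of $\dif{^2} f(\bx|0)(w)$, which is taken over \emph{all} approximating pairs $(t, w') \to (0^+, w)$. This is resolved by the elementary principle that the liminf is bounded above by the limit along any admissible sequence. A secondary remark worth recording is that the first-order inclusion $0 \in \sd f(\bx)$ in (b) and (c) is actually forced by strict positivity of $\dif{^2} f(\bx|0)$ away from zero---a genuine first-order descent direction $w$ would push $\Delta_t^2 f(\bx|0)(w) \sim -2 f'(\bx;w)/t$ to $-\infty$---so including it in the hypothesis is redundant but mirrors the standard NLP template.
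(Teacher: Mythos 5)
Your proof is correct: part (a) follows directly from the definitions, the contraposition argument for (c) via the normalized sequence $w^n=(x^n-\bx)/\norm{x^n-\bx}$ and compactness of the unit sphere is exactly the standard argument, and (b) is indeed an immediate corollary of (c). The paper itself offers no proof of this statement---it is quoted verbatim from Rockafellar--Wets (Theorems 10.1 and 13.24)---and your reconstruction, including the observation that the first-order inclusion is redundant in the sufficiency direction, matches the textbook proof.
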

A set-valued mapping $S:\R^n\rightrightarrows\R^m$ is a mapping from $\R^n$ into the power set of $\R^m$, so for each $x\in\R^n,\ S(x)\subset\R^m$. The \emph{graph} and \emph{domain} of $S$ are defined to be
\[
\gph{S} := \bset{(x,y)\in\R^n\times\R^m}{y\in S(x)} \text{ and } \dom{S} := \bset{x\in\R^n}{S(x)\neq\emptyset},
\]
and $S$ is \emph{graph-convex} whenever $\gph{S}$ is a convex subset of $\R^n\times\R^m$.
For a point $(\bx,\by)\in\gph{S}$, and neighborhoods $U$ of $\bx$ and $V$ of $\by$, a \emph{graphical localization} of $S$ at $\bx$ for $\by$ is a set-valued mapping $\widetilde S$ defined by $\gph{\widetilde S} = \gph{S}\cap(U\times V)$.	A \emph{single-valued localization} of $S$ \emph{at} $\bx$ for $\by$ is a graphical localization that is also function. If the domain of $\widetilde S$ is a neighborhood of $\bx,\ \widetilde S$ is called a single-valued localization of $S$ \emph{around} $\bx$ for $\by$. The mapping $S$ is \emph{outer semicontinuous} at $\bx$ relative to $X\subset\R^n$ if
\[
\limsup_{x\xrightarrow[X]{}\bx} S(x):=\bset{u}{\exists\, (x^n\xrightarrow[X]{}\bx)\ \exists\,(u^n\to u)\ \forall\,(n\in \bN)\ u^n\in S(x^n)} \subset S(\bx),
\]
and is \emph{inner semicontinous} relative to $X\subset\R^n$ if
\[
S(\bx)\subset \liminf_{x\xrightarrow[X]{}\bx}S(x):=\bset{u}{\forall\, (x^n\xrightarrow[X]{}\bx)\ \exists\,(N\in\bN,\ u^n\to u)\ \forall\,(n\ge N)\ u^n\in S(x^n)},
\]
where $x^n\xrightarrow[X]{}\bx \Longleftrightarrow x^n\to\bx\text{ with }x^n\in X.$
Then, \eqref{eq:limitingsd} is $\sd f(\bx) := \limsup_{x\xrightarrow[f]{}\bx} \hsd f(x)$. The last notion employed from variational analysis is that of normal and tangent vectors. Let $C\subset\R^n$, and let $\bc\in C$. Define the \emph{normal cone} to $C$ at $\bc$ as
\begin{equation}
\label{eq:limitingnormals}
\ncone{\bc}{C} := \limsup_{c\xrightarrow[C]{}\bc}\hat N(c\, |\, C), \text{ where }\hat N(c\, |\, C):=\bset{v}{\forall\, (c'\in C)\ \ip{v}{c'-c}\le o(\norm{c'-c})},
\end{equation}
and the \emph{tangent cone} to $C$ at $\bc$ as $\tcone{\bc}{C}:=\limsup_{t\searrow0}t^{-1}(C-\bc)$. A set $C$ is \emph{Clarke regular} at $\bc\in C$ if $C$ is locally closed at $\bc$ and $\ncone{\bc}{C} = \hat N(\bc\, |\, C)$. A nonempty, closed, convex set $C$ is Clarke regular at all $\bc\in C$, with $\ncone{\bc}{C} = \bset{v}{\ip{v}{c-\bc}\le0\text{ for all }c\in C}$, and $\tcone{\bc}{C} = \bset{v}{\ip{v}{w}\leq0 \text{ for all }w\in\ncone{\bc}{C}} =\cl\set{\R_{++}(C-\bc)}$ \cite[Theorem 6.9]{rockafellar_wets_1998}. We refer the reader to \cite[Chapter 6]{rockafellar_wets_1998} for a thorough exposition.
\\
Suppose $g:\R^n\to \R^m$ is $\cC^1$-smooth, $G:\R^n\rightrightarrows \R^m$ is a set-valued mapping with closed graph and $\set{\mathbf{B}_k}_{k\in\bN}\subset\R^{m\times n}$. Consider the \emph{generalized equation} $0\in g(z)+G(z)$. The \emph{Newton method} for $g+G$ is the iteration
\begin{equation}
\label{eq:geqnewton}
\text{find }z^{k+1}\text{ such that }0\in g(z^k) + \nabla g(z^k)(z^{k+1}-z^k) + G(z^{k+1}), \mbox{ for }k\in\bN,
\end{equation}
and the \emph{quasi-Newton method} for $g+G$ is the iteration
\begin{equation}
\label{eq:geqqn}
\text{find }z^{k+1}\text{ such that }0\in g(z^k) + \mathbf{B}_k(z^{k+1}-z^k) + G(z^{k+1}), \mbox{ for }k\in\bN.
\end{equation}

\section{Convex-composite first- and second-order theory}
\label{sec:cvxcomp}
We begin by recalling the basic ingredients of convex-composite optimization and the associated variational structures.
\begin{definition}[Convex-composite functions]
	Let $h:\R^m\to\eR$ be a closed, proper, convex function and $c:\R^n\to\R^m$ a $\cC^2$-smooth function. Define $f:\R^n\to\eR$ by
	\(
	f(x) := h(c(x)).
	\)
	We say the function $f$ is convex-composite.
\end{definition}
\begin{definition}[Convex-composite Lagrangian]\cite{burke1987second}
	For any $y\in \R^m$, define the function $(yc): \R^n\to\R$ by $(yc)(x) := \ip{y}{c(x)}$. The Lagrangian for the convex-composite $f$ is defined by 
	\(
	L(x,y) := (yc)(x) - h^\star(y),
	\)
	where $h^\star:\R^m\to\eR$ denotes the \emph{Fenchel conjugate} of the convex function $h$ defined by
	\(
	h^\star(y) := \sup_{z\in \R^m} \ip{z}{y} - h(z).
	\)
	The Hessian of $L$ in its first variables is denoted
	\begin{equation}\label{eq:HessLagrangian}
	\nabla^2_{xx}L(x,y) = \nabla^2(y c)(x) = \sum_{i=1}^m y_i\nabla^2c_i(x).
	\end{equation}
\end{definition}
\begin{definition}[Convex-composite multiplier sets]
	Suppose $f$ is convex-composite. Define the set of multipliers at $\bx\in\dom{f}$ for $v\in\R^n$ as in \cite[Theorem 13.14]{rockafellar_wets_1998} by
	\begin{equation}
	\label{eq:cvxcompmultset}
	Y(\bx, v) := \bset{y}{\begin{pmatrix}
		v\\0
		\end{pmatrix} \in \begin{pmatrix}
		\sd_x L(\bx,y) \\
		\sd_y (-L)(\bx,y)
		\end{pmatrix}}=\bset{y\in \sd h(c(\bx))}{\nabla c(\bx)^\top y=v},
	\end{equation}
	and define the set of multipliers at $\bx$ for 0 by
	\begin{equation}
	\label{optMultipliers}
	M(\bx) := Y(\bx, 0) = \Null{\nabla c(\bx)^\top}\cap \sd h(c(\bx)).
	\end{equation}
\end{definition}
A calculus for convex-composite functions at a point $\bx\in\dom{f}$ requires various types of ``constraint qualifications."  Stronger versions of the \emph{basic constraint qualification} \eqref{eq:bcq} will be employed to ensure uniqueness of the multiplier and underlying strict complementarity properties in later sections.
\begin{definition}[Convex-composite constraint qualifications]
	\label{def:cqs}
	Suppose $f$ is convex-composite and $\bx\in\dom{f}$. We say $f$ satisfies the 
	\begin{itemize}
		\item \emph{basic constraint qualification} at $\bx$ if
		\begin{equation}
		\label{eq:bcq}
		\tag{BCQ}
		\Null{\nabla c(\bx)^\top}\cap\ncone{c(\bx)}{\dom{h}}=\set{0},
		\end{equation}
		\item \emph{transversality condition} at $\bx$ if
		\begin{equation}
		\tag{TC}
		\label{eq:generaltransversality}
		\Null{\nabla c(\bar{x})^\top} \cap \parr{\sd h(c(\bar{x}))} = \{0\},
		\end{equation}
		\item \emph{strict criticality condition} at $\bx\in\dom{f}$ for $\by$ if 
		\begin{equation}
		\tag{SC}
		\label{eq:strictcriticality}
		\Null{\nabla c(\bar{x})^\top} \cap \ri{\sd h(c(\bar{x}))} = \set{\bar{y}}.
		\end{equation}
	\end{itemize}
\end{definition}
\begin{remark}
	Following \cite[Definition 10.23]{rockafellar_wets_1998}, one says that a convex-composite function $f$ is strongly amenable at $\bx\in\dom{f}$ if $f$ satisfies \eqref{eq:bcq} at $\bx$. One says that $f$ is fully amenable at $\bx\in\dom{f}$ if $f$ satisfies \eqref{eq:bcq} at $\bx$ and the function $h$ is PLQ convex. Here, we make use of the underlying assumption that $c$ is $\cC^2$-smooth.
\end{remark}
Notice the basic constraint qualification is a \emph{local property} in the following sense. If $f$ satisfies \eqref{eq:bcq} at $\bx$, then there exists a neighborhood $U$ of $\bx$ such that $f$ satisfies \eqref{eq:bcq} at all $x\in [U\cap c^{-1}(\dom{h})]$. Moreover, the basic constraint qualification ensures that the chain rule applies in the subdifferential calculus for convex-composite functions and establishes a foundation for the application of tools from variational analysis.
\begin{theorem}[Convex-composite first order necessary conditions]
	\label{thm:cvxcompfonc}
	Suppose $f$ is convex-composite and $\bx\in\dom{f}$ is such that $f$ satisfies \eqref{eq:bcq} at $\bx$. Then,
	\(
	\sd f(\bx) = \nabla c(\bx)^\top \sd h(c(\bx)),
	\)
	and for any $d\in\R^n$,
	\(
	\dif f(\bx)(d) = h'(c(\bx);\nabla c(\bx)d).
	\)
	Suppose, in addition, that $\bx$ is a local solution to \ref{theprogram}. Then,
	\(
	M(\bx):=\Null{\nabla c(\bx)^\top}\cap\sd h(c(\bx))\neq\emptyset,\text{ or equivalently, } 0\in \sd f(\bx),
	\)
	and for any $d\in\R^n$,
	\(
	h'(c(\bx);\nabla c(\bx)d)\ge0.
	\)
\end{theorem}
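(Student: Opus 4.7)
The plan is to derive both chain-rule identities from the strongly amenable calculus in \cite{rockafellar_wets_1998} and then substitute them into the abstract first-order necessary conditions of Theorem~3.1 in the excerpt. The crucial preliminary observation is that since $h$ is closed, proper, and convex, its horizon subdifferential coincides with the normal cone to its domain, $\hznsd h(c(\bx)) = \ncone{c(\bx)}{\dom{h}}$, so \eqref{eq:bcq} is exactly the subdifferential qualification
\[
\Null{\nabla c(\bx)^\top} \cap \hznsd h(c(\bx)) = \{0\}
\]
that activates the general chain rule for compositions of a nonsmooth function with a $\cC^2$-smooth map.

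With this qualification in hand, I would invoke \cite[Theorem 10.6]{rockafellar_wets_1998} to obtain $\sd f(\bx) = \nabla c(\bx)^\top \sd h(c(\bx))$ along with subdifferential regularity of $f$ at $\bx$ (inherited from convexity of $h$). For the subderivative identity, I would apply the parallel chain rule for subderivatives under the same qualification; combined with the classical convex-analytic fact that closed proper convex functions are semidifferentiable and satisfy $\dif h(c(\bx))(u) = h'(c(\bx);u)$ for every $u$, this produces $\dif f(\bx)(d) = h'(c(\bx); \nabla c(\bx) d)$.

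For the necessity portion, I would feed these formulas into Theorem~3.1(a). Local optimality of $\bx$ yields $0 \in \sd f(\bx)$, which via the subdifferential chain rule is equivalent to the existence of some $y \in \sd h(c(\bx))$ with $\nabla c(\bx)^\top y = 0$---i.e., $M(\bx) \neq \emptyset$. Likewise, $\dif f(\bx)(d) \ge 0$ for all $d$ translates immediately to $h'(c(\bx); \nabla c(\bx) d) \ge 0$ via the subderivative chain rule. The only nontrivial verification is the horizon-subdifferential identity $\hznsd h(c(\bx)) = \ncone{c(\bx)}{\dom{h}}$ for closed proper convex $h$, which routes through \cite[Proposition 8.12]{rockafellar_wets_1998}; beyond that single step, the argument is pure bookkeeping on top of existing calculus rules combined with the abstract necessary conditions.
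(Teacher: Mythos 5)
Your proposal is correct and follows essentially the same route as the paper, which simply cites \Cref{thm:fonc} together with the Rockafellar--Wets chain rule for strongly amenable compositions (\cite[Exercise 10.26(b)]{rockafellar_wets_1998}, the amenable specialization of the general chain rule you invoke via Theorem 10.6 and the identification $\hznsd h(c(\bx))=\ncone{c(\bx)}{\dom{h}}$) and the semidifferentiability of convex functions (\cite[Proposition 8.21]{rockafellar_wets_1998}). Your version merely fills in the same citations with slightly more detail.
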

\begin{proof}
This follows from \Cref{thm:fonc} and \cite[Proposition 8.21, Exercise 10.26(b)]{rockafellar_wets_1998}.
\end{proof}
We now establish a relationship between the various notions of a constraint qualification given in \Cref{def:cqs}.
\begin{lemma}
	\label{lem:CQimplicationchain}
	Suppose $f$ is convex-composite, $\bx\in\dom{f}$, and $\by\in\R^m$. Then, the following implications hold:
	\[
	\tiny
	\begin{tikzcd}[arrows=Rightarrow]
	& & \eqref{eq:bcq} \\
	\eqref{eq:strictcriticality} \arrow[r] & \eqref{eq:generaltransversality} \arrow[ur] \arrow[dr]\\
	& & (M(\bx)=\set{\by}) \arrow[uu]
	\end{tikzcd}
	\]
\end{lemma}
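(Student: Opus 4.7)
The plan is to prove the four implications appearing in the diagram: (SC)$\Rightarrow$(TC), (TC)$\Rightarrow$(BCQ), (TC)$\Rightarrow$($M(\bx)=\{\by\}$), and ($M(\bx)=\{\by\}$)$\Rightarrow$(BCQ). All four should follow by abbreviating $C := \sd h(c(\bx))$ and $N := \Null{\nabla c(\bx)^\top}$, and then invoking three standard convex-analytic facts: (i) if $\by \in \ri C$ and $v \in \parr C$, then $\by + \epsilon v \in \ri C$ for all sufficiently small $\epsilon > 0$; (ii) when $C \ne \emptyset$, the recession cone of $C$ equals $\ncone{c(\bx)}{\dom h}$, so in particular $C + \ncone{c(\bx)}{\dom h} \subset C$ and $\ncone{c(\bx)}{\dom h} \subset \parr C$; and (iii) $C - C \subset \parr C$. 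Fact (ii) is a direct subgradient computation: if $y \in C$ and $v \in \ncone{c(\bx)}{\dom h}$, then for every $c' \in \dom h$ one has $\ip{v}{c'-c(\bx)}\le 0$, so the subgradient inequality $h(c') \ge h(c(\bx)) + \ip{y}{c'-c(\bx)}$ implies $h(c') \ge h(c(\bx)) + \ip{y+v}{c'-c(\bx)}$, giving $y + v \in C$.

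For (SC)$\Rightarrow$(TC), take $v \in N \cap \parr C$; by (i), $\by + \epsilon v \in \ri C$ for small $\epsilon > 0$, and since $\by, v \in N$, also $\by + \epsilon v \in N$. The singleton condition in (SC) forces $\by + \epsilon v = \by$, whence $v = 0$. For (TC)$\Rightarrow$(BCQ), note that $C \ne \emptyset$ is implicit (otherwise $\parr C$ is not meaningfully a subspace and the implication's hypothesis collapses); then given $v \in N \cap \ncone{c(\bx)}{\dom h}$, pick any $y \in C$, use (ii) to get $y + v \in C$, and conclude $v = (y+v) - y \in \parr C$, so $v \in N \cap \parr C = \{0\}$ by (TC).

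For (TC)$\Rightarrow$($M(\bx) = \{\by\}$), one reads the statement in its natural conditional sense, assuming $\by \in M(\bx)$ (this is the only way the singleton equality can hold, and it is automatic when chained from (SC)). Given any other $y' \in M(\bx)$, fact (iii) yields $\by - y' \in \parr C$, and obviously $\by - y' \in N$, so (TC) forces $y' = \by$. Finally, for ($M(\bx) = \{\by\}$)$\Rightarrow$(BCQ), let $v \in N \cap \ncone{c(\bx)}{\dom h}$; applying (ii) iteratively shows $\by + \lambda v \in C$ for every $\lambda \ge 0$, and $\by + \lambda v \in N$, hence $\by + \lambda v \in M(\bx) = \{\by\}$ for all $\lambda \ge 0$, forcing $v = 0$. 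The only real care-point is the normal cone/recession cone identification in (ii), which deserves an explicit one-line subgradient verification as above; everything else is bookkeeping about affine hulls and intersections.
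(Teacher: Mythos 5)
Your proof is correct and follows essentially the same route as the paper's: both implications into \eqref{eq:bcq} rest on the inclusion $\ncone{c(\bx)}{\dom{h}}\subset\parr{\sd h(c(\bx))}$ (which the paper cites from Rockafellar--Wets and you verify directly via the subgradient inequality), while the remaining two implications are the abstract facts about intersecting a closed convex set with a null space that the paper isolates in its appendix lemma, including the same conditional reading of $M(\bx)=\set{\by}$. Your justification of \eqref{eq:strictcriticality}$\Rightarrow$\eqref{eq:generaltransversality} via relative openness of $\ri{\sd h(c(\bx))}$ along directions of $\parr{\sd h(c(\bx))}$ is a slightly more streamlined version of the paper's conic-spanning construction, but it exploits the identical idea.
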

\begin{proof}

	[\eqref{eq:generaltransversality}$\Longrightarrow$\eqref{eq:bcq}]
	By \cite[Proposition 8.12]{rockafellar_wets_1998}, at any point $\bc\in\dom{\sd h},\ \ncone{\bc}{\dom{h}}\subset \parr{\sd h(\bc)}$. The implication follows.
	
	$[(M(\bx)=\set{\by})\Longrightarrow$\eqref{eq:bcq}]
	
	Let $M(\bx)=\set{\by}$ and suppose there exists 
	\[
	0\neq v\in \Null{\nabla c(\bx)^\top}\cap \ncone{c(\bx)}{\dom{h}}\subset \Null{\nabla c(\bx)^\top}\cap \parr{\sd h(c(\bx))}.
	\]
	Then, by the subgradient inequality, $v+\by\in \Null{\nabla c(\bx)^\top}\cap \sd h(c(\bx))=M(\bx)$, which is a contradiction.
	\\
	The rest of the proof appears in \Cref{lem:cqcvxsetlinmap} in the appendix as general facts about closed convex sets $C$ and linear maps $A$.
\end{proof}
Gauss-Newton methods for iteratively solving \ref{theprogram} are based on finding a search direction that approximates a solution to subproblems of the form
\begin{mini}|l|
	{d\in\R^n}{h(c(\hat x) + \nabla c(\hat x)d) + \frac{1}{2}d^\top \widehat Hd.}{\tag{$\mathbf{\widehat P}$}}{}
\end{mini}
Local rates of convergence for algorithms of this type, where the function $h$ is assumed to be finite-valued and piecewise linear convex were developed by Womersley \cite{womersley1985local} based on tools developed for classical nonlinear programming. More recently, Cibulka et.~al.~\cite{cibulka2016strong} successfully applied a modern approach through generalized equations to obtain similar and stronger results again in the piecewise linear convex case. Inspired by these results and the existence of a sophisticated first- and second-order subdifferential calculus for piecewise linear-quadratic convex functions \cite{rockafellar_wets_1998}, we develop a convergence theory in the piecewise linear-quadratic case from the generalized equations perspective. The basic notational objects for our development are given in the next definition.
\begin{definition}[Convex-composite generalized equations]
	\label{def:g+G}
	Let $f$ be convex-composite, and define the set-valued mapping $g+G:\R^{n+m}\rightrightarrows \R^{n+m}$ by 
	\begin{equation}
	\label{kktS}
	g(x,y) = \begin{pmatrix}
	\nabla c(x)^\top y \\
	-c(x)
	\end{pmatrix},\quad
	G(x,y) = \begin{pmatrix}
	\{0\}^n\\
	\sd h^\star(y)
	\end{pmatrix}.
	\end{equation}
	For a fixed $(\bx,\by)\in\R^n\times\R^m$, define the linearization mapping 
	\begin{equation}
	\label{eq:linearizedmapping}
	\cG:(x,y)\mapsto g(\bx, \by) + \nabla g(\bx, \by)\begin{pmatrix}
	x-\bx\\y-\by
	\end{pmatrix} + G(x, y),
	\end{equation}
	where
	\(
	\nabla g(\bx,\by) = \begin{pmatrix}
	\nabla^2(\by c)(\bx) & \nabla c(\bx)^\top \\
	-\nabla c(\bx) & 0
	\end{pmatrix}.
	\)
\end{definition}
Observe that for any $\bx\in\dom{f}$ where $f$ satisfies \eqref{eq:bcq}, $\bx$ satisfies the first-order necessary conditions of \Cref{thm:fonc} for the problem \ref{theprogram} if and only if there exists $\by$ such that $(\bx,\by)$ solves the generalized equation $g+G\ni0$. More precisely, we have
\begin{equation}
\label{eq:GEQstationarity}
0 \in g(\bx,\by)+G(\bx,\by) \Leftrightarrow \nabla c(\bx)^\top \by = 0\text{ and }\by\in\sd h(c(\bx))\Leftrightarrow M(\bx)\neq\emptyset.
\end{equation}
The relationship between the linearization of the generalized equation described in \eqref{eq:linearizedmapping} and the subproblems \ref{subproblems} is described in the following lemma. The proof follows from \Cref{thm:cvxcompfonc}.
\begin{lemma}
	Let $f$ be convex-composite and $(\hx,\hy)\in\R^n\times\R^m$ be such that $f$ satisfies \eqref{eq:bcq} at $\hx$, and define $\widehat H := \nabla^2(\hat yc)(\hat x)$. Then, $(\tilde d,\tilde y)\in\R^n\times\R^m$ satisfy the optimality conditions for
	\begin{mini}|l|
		{d\in\R^n}{h(c(\hat x) + \nabla c(\hat x)d) + \frac{1}{2}d^\top \widehat Hd}{\tag{$\mathbf{\widehat P}$}}{\label{subproblems}}
	\end{mini}
	if and only if $(\hat x\!+\!\tilde d,\tilde y)$ solves the Newton equations for $g\!+\!G$:
	\(
	0\!\in\!g(\hat x,\hy)\!+\!\nabla g(\hx,\hy)\begin{pmatrix}
	x\!-\!\hx \\ y\!-\!\hy
	\end{pmatrix}\!+\!G(x,y).
	\)
\end{lemma}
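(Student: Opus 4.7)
The plan is to verify the equivalence by writing out the Newton generalized equation component by component and matching it against the subproblem's KKT system, with the Fenchel conjugate inversion identity supplying the nonsmooth bridge between the two formulations.

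First I would substitute the explicit formulas for $g$, $G$, and $\nabla g(\hx,\hy)$ from \Cref{def:g+G} into the Newton inclusion
\[
0\in g(\hx,\hy)+\nabla g(\hx,\hy)\begin{pmatrix}x-\hx\\ y-\hy\end{pmatrix}+G(x,y).
\]
Splitting the inclusion into its $\R^n$ and $\R^m$ blocks and setting $d:=x-\hx$, the first block collapses (after cancellation of $\pm \nabla c(\hx)^\top \hy$) to the linear equation
\[
\widehat H\, d+\nabla c(\hx)^\top y=0,
\]
and the second block, after moving the $\{0\}^n$ to the right and rearranging, reads
\[
c(\hx)+\nabla c(\hx)\,d\in\sd h^\star(y).
\]
This purely algebraic reduction is the first key step.

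Next, I would invoke the standard Fenchel conjugacy identity for closed, proper, convex $h$, namely $z\in\sd h^\star(y)\Longleftrightarrow y\in\sd h(z)$, to rewrite the second component as
\[
y\in\sd h\bigl(c(\hx)+\nabla c(\hx)\,d\bigr).
\]
Combined with the stationarity equation $\widehat H d+\nabla c(\hx)^\top y=0$, the Newton system with $(x,y)=(\hx+\tilde d,\tilde y)$ is therefore equivalent to
\[
\widehat H\,\tilde d+\nabla c(\hx)^\top \tilde y=0\quad\text{and}\quad \tilde y\in\sd h\bigl(c(\hx)+\nabla c(\hx)\tilde d\bigr).
\]

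It remains to identify these two conditions with the first-order optimality conditions of \ref{subproblems}. I would apply \Cref{thm:cvxcompfonc} to the inner convex-composite function $d\mapsto h(c(\hx)+\nabla c(\hx)d)$, whose outer map is the closed proper convex $h$ and whose inner map $d\mapsto c(\hx)+\nabla c(\hx)d$ is affine with constant Jacobian $\nabla c(\hx)$. Together with the smooth quadratic term $\tfrac12 d^\top\widehat H d$, the sum rule yields
\[
\sd\Bigl(h(c(\hx)+\nabla c(\hx)\,\cdot)+\tfrac12(\cdot)^\top\widehat H(\cdot)\Bigr)(\tilde d)=\nabla c(\hx)^\top\sd h\bigl(c(\hx)+\nabla c(\hx)\tilde d\bigr)+\widehat H\tilde d,
\]
and $0$ lies in this set precisely when a multiplier $\tilde y$ exists satisfying the two displayed conditions above, completing the ``if and only if.''

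The main obstacle I anticipate is justifying the chain rule/sum rule step: strictly speaking, \Cref{thm:cvxcompfonc} requires \eqref{eq:bcq} for the linearized composition at $\tilde d$, i.e.\ $\Null{\nabla c(\hx)^\top}\cap N_{\dom h}(c(\hx)+\nabla c(\hx)\tilde d)=\{0\}$, whereas our hypothesis only gives \eqref{eq:bcq} at $c(\hx)$. Since the Jacobian is the \emph{same} matrix $\nabla c(\hx)$ for every $\tilde d$, and since the subproblem's inner map is affine, one can either (i) invoke the weaker convex chain rule, which requires only a relative-interior qualification for affine compositions, or (ii) observe that the statement only concerns ``satisfaction of the optimality conditions,'' which we may simply \emph{define} as the KKT pair $\tilde y\in\sd h(c(\hx)+\nabla c(\hx)\tilde d)$, $\widehat H\tilde d+\nabla c(\hx)^\top\tilde y=0$ matching the Newton system. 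Either route dispatches the potential gap cleanly.
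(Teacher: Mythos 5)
Your proof is correct and follows the same route the paper intends (the paper's own proof is the one-line remark ``follows from \Cref{thm:cvxcompfonc}''): unpack the Newton inclusion block by block, use the conjugate inversion $z\in\sd h^\star(y)\Leftrightarrow y\in\sd h(z)$ for closed proper convex $h$ to convert $c(\hx)+\nabla c(\hx)d\in\sd h^\star(y)$ into $y\in\sd h(c(\hx)+\nabla c(\hx)d)$, and match the result against the KKT system produced by \Cref{thm:cvxcompfonc} for the linearized subproblem. The algebraic reduction (cancellation of $\pm\nabla c(\hx)^\top\hy$ in the first block, rearrangement in the second) is exactly right.

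Your worry in the last paragraph about the constraint qualification holding at $\tilde d$ rather than at $0$ is legitimate and worth flagging; the paper in fact disposes of it in an appendix lemma for the model function $\phi_{\bm u}$, which shows that \eqref{eq:bcq} at $\hx$ forces \eqref{eq:bcq} for the affinely-precomposed problem at \emph{every} $\bd$ in its domain. The mechanism is simpler and sharper than your option~(i): if $v\in\Null{\nabla c(\hx)^\top}$ then $\ip{v}{\nabla c(\hx)\bd}=0$, and for the convex set $\dom{h}$ one then has $v\in\ncone{c(\hx)+\nabla c(\hx)\bd}{\dom{h}}\Longrightarrow v\in\ncone{c(\hx)}{\dom{h}}$ directly from the normal-cone inequality; so the intersection stays $\set{0}$. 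Your option~(ii) --- taking the KKT pair itself as the definition of ``satisfy the optimality conditions'' --- is also acceptable and is consistent with how the rest of the paper uses the subproblem. Either way, your argument stands.
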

\section{Geometry of PLQ Functions and Their Domains} \label{sec:geometry}
In this section, unless otherwise stated, we let $f := h\circ c$ where $h$ is piecewise linear-quadratic convex and $c$ is $\cC^2$-smooth.
\begin{definition}[piecewise linear-quadratic]
	\label{def:plq}
	A proper function $h:\R^m\to\eR$ is called piecewise linear-quadratic (PLQ) if $\dom{h}\neq\emptyset$ and $\dom{h}$ can be represented as the union of $\cK\ge1$ polyhedral sets of the form
	\begin{equation}
	\label{eq:polyhedralsets}
	C_k = \bset{c}{
		\begin{aligned}
		& \ip{a_{kj}}{c} \le \alpha_{kj},\text{ for all } j\in\set{1,\dotsc,s_k}\\
		\end{aligned}
	}
	\end{equation}
	relative to each of which $h(c)$ is given by an expression of the form $\frac{1}{2}\ip{c}{Q_kc} + \ip{b_k}{c} + \beta_k$ for some scalar $\beta_k\in\R$, vector $b_k\in\R^n$, and symmetric matrix $Q_k$.
\end{definition}
\begin{remark}
	The sets $C_k$ do not necessarily form a partition of the set $C$.
\end{remark}
The following lemma is straightforward.
\begin{lemma}
	Suppose $h$ is piecewise linear-quadratic convex. Then, for any $k\in \cK$, the matrices $Q_k$ satisfy $\ip{c}{Q_kc}\ge0$ for all $c\in \parr{C_k}$.
\end{lemma}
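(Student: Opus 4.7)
The plan is to derive the semidefiniteness of $Q_k$ on $\parr{C_k}$ from the convexity of $h$ combined with the fact that $h$ agrees with a quadratic on $C_k$. The assertion is trivial if $C_k$ is empty or a singleton, so I may assume $C_k$ has nonempty relative interior. Let $q_k(c) := \frac{1}{2}\ip{c}{Q_k c} + \ip{b_k}{c} + \beta_k$, so $h \equiv q_k$ on $C_k$.

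First I would argue that $q_k$ is convex on $C_k$. Indeed, for any $c_1, c_2 \in C_k$ and $\lambda \in [0,1]$, convexity of $C_k$ gives $\lambda c_1 + (1-\lambda) c_2 \in C_k$, and convexity of $h$ yields
\[
q_k(\lambda c_1 + (1-\lambda) c_2) = h(\lambda c_1 + (1-\lambda) c_2) \le \lambda h(c_1) + (1-\lambda) h(c_2) = \lambda q_k(c_1) + (1-\lambda) q_k(c_2).
\]

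Next, fix any $\bc \in \ri{C_k}$. By the definition of the relative interior, for every $w \in \parr{C_k}$ there exists $\epsilon > 0$ such that $\bc + t w \in C_k$ for all $t \in [-\epsilon, \epsilon]$. Consider the one-variable function
\[
\varphi(t) := q_k(\bc + t w) = \tfrac{1}{2} t^2 \ip{w}{Q_k w} + t \ip{w}{Q_k \bc + b_k} + q_k(\bc),
\]
which is convex on $[-\epsilon, \epsilon]$ by the previous step. Since $\varphi$ is a real quadratic polynomial in $t$ with leading coefficient $\tfrac{1}{2}\ip{w}{Q_k w}$, convexity on a nondegenerate interval forces $\ip{w}{Q_k w} \ge 0$. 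As $w \in \parr{C_k}$ was arbitrary, this gives the claim.

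There is no real obstacle; the only subtlety is ensuring that $w$ lies in a direction along which $C_k$ actually extends from $\bc$, which is exactly why the relative interior (rather than an arbitrary point of $C_k$) is the right choice. The argument uses only the convexity of $h$, the polyhedrality (in particular, convexity) of $C_k$, and the characterization $\parr{C_k} = \R(C_k - \bc)$ together with the fact that every element of $\parr{C_k}$ is realized as a feasible direction from any relative-interior point.
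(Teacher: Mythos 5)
Your proof is correct, and it supplies in the natural way the argument that the paper omits entirely (the lemma is stated there with the remark ``the following lemma is straightforward'' and no proof). The key steps --- restricting $h$ to $C_k$ where it equals the quadratic $q_k$, moving to a relative interior point so that every $w\in\parr{C_k}$ is a two-sided feasible direction, and reading off $\ip{w}{Q_kw}\ge0$ from convexity of the one-dimensional quadratic --- are exactly what is needed.
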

For the sake of reference we recall the normal and tangent cone structure for polyhedral sets. 
\begin{definition}[Active indices]
	\label{def:activeidx}
	For a piecewise linear-quadratic function $h$ and a point $\bar c \in \dom{h}$, define the set
	\(
	\cK(\bar c) := \bset{k\in \cK}{\bar c \in C_k},
	\)
	and write $\bk := |\cK(\bc)|,$ so that $\cK(\bc) = \set{k_1,k_2,\dotsc,k_{\bk}}$.
\end{definition}
\begin{theorem}[Normal and Tangent Cones to Polyhedra]
	\label{thm:polyhedraltconencone}
	\cite[Theorem 6.46]{rockafellar_wets_1998} Suppose $c\in C_k$ with $C_k$ polyhedral as in \eqref{eq:polyhedralsets}. Let $I_k(c) = \bset{j\in\set{1,\dotsc,s_k}}{\ip{a_{kj}}{c}=\alpha_{kj}}$, and let $\ell_k = |I_k(c)|$. Then,
	\begin{equation}
	\label{eq:ckncone}
	\ncone{c}{C_k}=\bset
	{
		\sum_{j\in I_k(c)} \lambda_ja_{kj}
	}
	{
		\lambda_j\ge0,\ j\in I_k(c)
	}\text{ and } 		\tcone{c}{C_k}=\bset{v}
	{
		\ip{a_{kj}}{v} \le 0,\ j\in I_k(c)
	}.
	\end{equation}
\end{theorem}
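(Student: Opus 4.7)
The plan is to establish the tangent cone formula first, then derive the normal cone via polar duality. Since $C_k$ is closed and convex, I can invoke the characterization $\tcone{c}{C_k}=\cl\set{\R_{++}(C_k-c)}$ from the excerpt, and exploit the fact that $\ncone{c}{C_k}$ is the polar of $\tcone{c}{C_k}$ by bipolarity of closed convex cones. Both cone formulas will then follow by a direct analysis of the inequality representation of $C_k$.

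For the tangent cone, I would show the inclusion $\tcone{c}{C_k}\subset\bset{v}{\ip{a_{kj}}{v}\le 0,\ j\in I_k(c)}$ by taking $v\in\tcone{c}{C_k}$, writing it as a limit of $t_n^{-1}(c_n-c)$ with $c_n\in C_k$ and $t_n\searrow 0$, and observing that for each active index $j\in I_k(c)$ the inequality $\ip{a_{kj}}{c_n-c}\le 0$ survives normalization and passage to the limit. The reverse inclusion is a direct feasibility check: fixing $v$ with $\ip{a_{kj}}{v}\le 0$ for all $j\in I_k(c)$, I would verify that $c+tv\in C_k$ for all sufficiently small $t\ge 0$. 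Active constraints are preserved because $\ip{a_{kj}}{c+tv}=\alpha_{kj}+t\ip{a_{kj}}{v}\le\alpha_{kj}$, and inactive constraints survive for small $t$ by continuity, since $\ip{a_{kj}}{c}<\alpha_{kj}$ strictly. Hence $v\in\R_{++}(C_k-c)\subset\tcone{c}{C_k}$.

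The normal cone identity then drops out by polarity. The excerpt establishes that $\tcone{c}{C_k}=\bset{v}{\ip{v}{w}\le 0\ \text{for all }w\in\ncone{c}{C_k}}$, so applying the bipolar theorem for closed convex cones yields $\ncone{c}{C_k}=\tcone{c}{C_k}^\circ$. Computing this polar reduces to identifying the polar of the finitely generated polyhedral cone $\bset{v}{\ip{a_{kj}}{v}\le 0,\ j\in I_k(c)}$, which by Farkas' lemma equals $\bset{\sum_{j\in I_k(c)}\lambda_ja_{kj}}{\lambda_j\ge 0}$. The principal technical ingredient is this Farkas-type identification of the polar cone; the remainder of the argument reduces to elementary limit and continuity manipulations. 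Farkas' lemma can be invoked as a classical fact from convex analysis or derived here directly by hyperplane separation, using that the conical hull $\bset{\sum_{j\in I_k(c)}\lambda_ja_{kj}}{\lambda_j\ge 0}$ is closed since it is finitely generated: if $w$ lies outside this hull, a separating vector $v$ can be shown to satisfy $\ip{a_{kj}}{v}\le 0$ for every $j\in I_k(c)$ while $\ip{w}{v}>0$, contradicting $w\in\tcone{c}{C_k}^\circ$.
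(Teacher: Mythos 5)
The paper gives no proof of this result—it is quoted verbatim from \cite[Theorem 6.46]{rockafellar_wets_1998}—and your argument is the standard one used there: a direct two-sided computation of the tangent cone (limit argument for one inclusion, feasibility of $c+tv$ for small $t$ for the other) followed by polarity and the Farkas-type identification of the polar of a finitely generated cone, whose closedness you correctly flag as the key technical point. Your proposal is correct and complete as a proof sketch.
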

Our first- and second-order analysis in the PLQ case heavily depends on the following results from  \cite{rockafellar_wets_1998}.
\begin{proposition}
\cite[Propositions 10.21, 13.9]{rockafellar_wets_1998}
	\label{plqformulae}
	If $h:\R^m\to\eR$ is piecewise linear-quadratic, then $\dom{h}$ is closed, $h$ is continuous relative to $\dom{h}$. Consequently, $h$ is closed. At any point $\bc\in \dom{h},\ h'(\bc;\cdot)=\dif h(\bar c)$, and $h'(\bc;\cdot)$ is piecewise linear with
	\(
	\dom{h'(\bc;\cdot)}=\bigcup_{k\in K(\bar c)} \tcone{\bar c}{C_k} = \tcone{\bar c}{\dom{h}}.
	\)
	In particular, for $k\in \cK(\bc)$ and $w \in \tcone{\bar c}{C_k}$,
	\begin{equation}
	\label{eq:plqsubderiv}
	h'(\bar c;w) = \ip{Q_k\bar c+b_k}{w}.
	\end{equation}
	If, in addition, $h$ is convex, then $\dom{h}$ is polyhedral, 
	\begin{equation}
	\label{eq:plqsubdiff}
	\emptyset \ne \sd h(\bar c) = \bigcap_{k\in \cK(\bc)} \bset{y}{y-Q_k\bar c-b_k\in \ncone{\bar c}{C_k}},
	\end{equation}
	$h''(\bc;\cdot)$ is piecewise linear-quadratic, but not necessarily convex, and for any $w\in\R^m$,
	\begin{equation}
	\label{eq:plq2ndnonneg}
	0\le h''(\bc;w) = \begin{cases}
	\ip{w}{Q_kw} & \text{when }w\in \tcone{\bc}{C_k}, \\
	\infty & \text{when }w\not\in\tcone{\bc}{\dom{h}}.
	\end{cases}
	\end{equation}
	For every $y\in\sd h(\bc),\,\, \dif{^2} h(\bc|y)$ is piecewise linear-quadratic and convex. Let 
	\(
	K(\bc, y) := \bset{w}{h''(\bc;w)=\ip{y}{w}}.
	\)
	Then, $K(\bc, y)$ is a polyhedral cone, and
	\begin{equation}
	\label{eq:plqsecondsubderivxvw}
	\dif{^2} h(\bc|y)(w) = \lim_{\tau\searrow0}\Delta_\tau^2 h(\bc|y)(w) = \begin{cases} h''(\bc;w) & w\in K(\bc,y),\\
	+\infty & \text{otherwise.}	
	\end{cases}
	\end{equation}
	Moreover, there exists a neighborhood $V$ of $\bc$ such that
	\begin{equation}
	\label{eq:plq2ndexpansion}
	h(c) = h(\bc) + h'(\bc;c-\bc) + \frac{1}{2}h''(\bc;c-\bc) \text{ for }c\in V\cap\dom{h}.
	\end{equation}
\end{proposition}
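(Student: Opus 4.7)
The overall plan is to exploit the piecewise structure: on each polyhedral piece $C_k$, the function $h$ agrees with the quadratic $q_k(c) := \tfrac12\ip{c}{Q_kc} + \ip{b_k}{c} + \beta_k$, and $\cK(\bc)$ is finite, so most questions reduce to a case analysis over the $k\in\cK(\bc)$. Closedness of $\dom{h}=\bigcup_k C_k$ is immediate as a finite union of closed polyhedra; continuity of $h$ relative to $\dom{h}$ follows because any $c_n\to\bc$ in $\dom{h}$ has, by pigeonhole, a subsequence inside some single $C_k\ni\bc$, along which $h(c_n)=q_k(c_n)\to h(\bc)$. Closedness of $h$ as a function then follows from continuity on a closed domain.

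For the first-order claims, I would fix $\bc\in\dom{h}$ and $w\in\R^m$. If $w\in\tcone{\bc}{C_k}$ for some $k\in\cK(\bc)$, then by \Cref{thm:polyhedraltconencone} the ray $\bc+tw$ lies in $C_k$ for small $t>0$, and a direct Taylor expansion of $q_k$ gives both the existence of $h'(\bc;w)$ and formula \eqref{eq:plqsubderiv}. For $w\notin\tcone{\bc}{\dom{h}}$, the domain constraint forces $h'(\bc;w)=+\infty$. The identification $h'(\bc;\cdot)=\dif h(\bc)$ will reduce to showing that the $\liminf$ over directions $w'\to w$ and $t\searrow 0$ agrees with the one-sided limit, which follows from the finite polyhedral decomposition: small perturbations of $w$ remain inside the same tangent cone $\tcone{\bc}{C_k}$. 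Under convexity, $\dom{h}$ is polyhedral since a convex finite union of polyhedra is polyhedral, and \eqref{eq:plqsubdiff} can be derived from first principles: $y\in\sd h(\bc)$ iff, for each $k\in\cK(\bc)$, $\bc$ minimizes $q_k-\ip{y}{\cdot}$ over $C_k$, which by the positive-semidefiniteness lemma and polyhedral KKT is the condition $y-Q_k\bc-b_k\in\ncone{\bc}{C_k}$.

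For the second-order theory, the formula $h''(\bc;w)=\ip{w}{Q_kw}$ for $w\in\tcone{\bc}{C_k}$ comes from direct Taylor expansion, with consistency across overlapping cones ensured because $h$ determines $q_k$ uniquely on $C_k$ near $\bc$. For the second subderivative characterization \eqref{eq:plqsecondsubderivxvw}, I would analyze the scaled quotient $\Delta_t^2 h(\bc|y)(w)$ case by case: if $w\notin\tcone{\bc}{\dom{h}}$ or $h'(\bc;w)>\ip{y}{w}$ (i.e., $w\notin K(\bc,y)$), the quotient diverges to $+\infty$, while on $K(\bc,y)$ the Taylor expansion on the relevant piece yields $h''(\bc;w)$. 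Polyhedrality of $K(\bc,y)$ follows from its expression as a finite union over $k\in\cK(\bc)$ of the polyhedral sets $\tcone{\bc}{C_k}\cap\{w:\ip{Q_k\bc+b_k}{w}=\ip{y}{w}\}$.

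The hardest part will be the $\liminf$ analysis over perturbed directions $w'\to w$ for both $\dif h(\bc)$ and $\dif^2 h(\bc|y)$, together with establishing convexity of $\dif^2 h(\bc|y)$ in the convex case. Convexity is nontrivial because $h''(\bc;\cdot)$ itself need not be convex, but restricting to $K(\bc,y)$ forces the subgradient $y$ to be active on each active piece in a compatible way, and combining this with positive semidefiniteness of $Q_k$ on $\parr{C_k}$ will yield the required convexity. The local expansion \eqref{eq:plq2ndexpansion} then falls out: on a small enough neighborhood $V$ of $\bc$, one has $V\cap\dom{h}\subset\bigcup_{k\in\cK(\bc)} C_k$ by excising inactive pieces, and on each $C_k\ni\bc$, $h=q_k$ gives the expansion exactly.
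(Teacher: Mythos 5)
The paper does not prove this proposition; it is a citation of \cite[Propositions 10.21, 13.9]{rockafellar_wets_1998}, which are themselves substantial theorems in Rockafellar and Wets' text. You have instead attempted to re-derive the result from scratch, which is a legitimately different approach, and your overall decomposition strategy — reduce every question to a finite case analysis over the active pieces $C_k$ via a pigeonhole argument and a local excision of inactive pieces — is indeed the right skeleton for a self-contained proof. Your arguments for closedness of $\dom{h}$, continuity of $h$ relative to $\dom{h}$, formula \eqref{eq:plqsubderiv}, formula \eqref{eq:plq2ndnonneg}, the local expansion \eqref{eq:plq2ndexpansion}, and the subdifferential formula \eqref{eq:plqsubdiff} are all essentially correct modulo details.

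There are three genuine gaps. First, your justification that $\dif h(\bc) = h'(\bc;\cdot)$ rests on the claim that ``small perturbations of $w$ remain inside the same tangent cone $\tcone{\bc}{C_k}$,'' which is false when $w$ lies on the boundary of that cone; a perturbed direction $w'$ may move into a different active cone $\tcone{\bc}{C_{k'}}$. The argument can be saved — the difference quotient $t^{-1}(h(\bc+tw')-h(\bc))$ equals $\ip{Q_k\bc+b_k}{w'} + \tfrac{t}{2}\ip{w'}{Q_k w'}$ for whichever active $k$ contains $\bc+tw'$, the first term is $h'(\bc;w')$ and is consistent across overlapping cones, the quadratic term vanishes as $t\searrow 0$ because the $Q_k$ are finitely many and $w'$ is bounded, and then one invokes continuity of the piecewise linear $h'(\bc;\cdot)$ relative to its polyhedral domain — but as written your reasoning does not support the claim. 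Second, and most seriously, the convexity of $\dif^2 h(\bc|y)$ in the convex case is not actually proved: you acknowledge it is the hardest part, but the sketch that ``restricting to $K(\bc,y)$ forces the subgradient $y$ to be active on each active piece in a compatible way'' does not by itself establish convexity of a function equal to $\ip{w}{Q_k w}$ on a union of cones with varying $Q_k$, since the $Q_k$ need only agree on overlaps, not globally. In Rockafellar--Wets this is established through a duality/conjugacy argument for PLQ functions, and your proposal does not replace that machinery. Third, polyhedrality of $\dom{h}$ when $h$ is additionally convex (``a convex finite union of polyhedra is polyhedral'') is asserted without proof; this is precisely the content of \Cref{plqstructthm} in the paper (R\&W Lemma 2.50) and is not elementary. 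Unless you intend to reprove that lemma too, you should cite it rather than treat it as obvious.

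One minor but worth flagging point: in the paper's statement, $K(\bc,y)$ is written as $\bset{w}{h''(\bc;w)=\ip{y}{w}}$, whereas your computation correctly uses the first-order condition $h'(\bc;w)=\ip{y}{w}$, which is the standard definition (a cone, being the equality set of two positively homogeneous degree-$1$ functions). You have implicitly corrected what appears to be a typo in the stated definition, and your formula $K(\bc,y)=\bigcup_{k\in\cK(\bc)}\tcone{\bc}{C_k}\cap\bset{w}{\ip{Q_k\bc+b_k}{w}=\ip{y}{w}}$ is consistent with the $h'$ version; you should make that correction explicit rather than silent.
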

\begin{theorem} 
	\label{thm:rockafellarplqcomposite}
	\cite[Theorem 13.14]{rockafellar_wets_1998} Let $f=h\circ c$ for a $\cC^2$ mapping $c:\R^n\to\R^m$ and a piecewise linear-quadratic convex $h:\R^m\to\eR$. Let $\bx\in \dom{f}$ and suppose $f$ satisfies \eqref{eq:bcq} at $\bx$.
	Then, for any $v\in\sd f(\bx)$, the set	$Y(\bx, v)$ given by \eqref{eq:cvxcompmultset}
	is compact as well as convex and nonempty, and for any $w\in\R^n$
	\begin{equation}
	\label{eq:secondderivHessianexpansion}
	\dif{^2} f(\bx|v)(w) = \dif{^2} \bar{f}(\bx|v)(w) + \max{\bset{\ip{w}{\nabla^2(yc)(\bx)w}}{y\in Y(\bx,v)}},
	\end{equation}
	with $\bar f(x):=h(c(\bx) + \nabla c(\bx)[x-\bx])$ piecewise linear-quadratic convex.
\end{theorem}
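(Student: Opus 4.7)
The plan is to establish the structural claims on $Y(\bx,v)$ first, and then derive the second-order chain rule via a careful Taylor expansion of $f=h\circ c$.

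For the properties of $Y(\bx,v)$: nonemptiness follows directly from \Cref{thm:cvxcompfonc}, which under \eqref{eq:bcq} gives $\sd f(\bx)=\nabla c(\bx)^\top \sd h(c(\bx))$, so $v\in\sd f(\bx)$ forces some $y\in\sd h(c(\bx))$ with $\nabla c(\bx)^\top y=v$. Convexity is immediate: $\sd h(c(\bx))$ is convex (in fact polyhedral, by \eqref{eq:plqsubdiff}) and the constraint $\nabla c(\bx)^\top y=v$ is affine. For compactness, the horizon/recession cone of the polyhedral set $\sd h(c(\bx))$ is $\ncone{c(\bx)}{\dom h}$, so the horizon cone of $Y(\bx,v)$ is contained in $\Null{\nabla c(\bx)^\top}\cap\ncone{c(\bx)}{\dom h}=\{0\}$ by \eqref{eq:bcq}. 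Hence $Y(\bx,v)$ is bounded, and, being closed, compact.

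For the second-order formula, I would exploit that $\bar f=h\circ \bar c$ with $\bar c(x):=c(\bx)+\nabla c(\bx)(x-\bx)$ affine, so $\bar f$ is PLQ convex and satisfies \eqref{eq:bcq} at $\bx$ with the \emph{same} multiplier set $Y(\bx,v)$. Since $\bar c$ has zero Hessian, proving \eqref{eq:secondderivHessianexpansion} reduces to showing that the ``curvature correction'' in passing from $\bar f$ to $f$ produces precisely $\max_{y\in Y(\bx,v)}\ip{w}{\nabla^2(yc)(\bx)w}$. I would expand $c(\bx+tw')=\bar c(\bx+tw')+\tfrac{t^2}{2}R(w')+o(t^2)$, where $R(w')_i=\ip{w'}{\nabla^2 c_i(\bx)w'}$, and use the local PLQ expansion \eqref{eq:plq2ndexpansion} at $c(\bx)$ to express $h(c(\bx+tw'))$ and $h(\bar c(\bx+tw'))$ via the piecewise-linear $h'(c(\bx);\cdot)$ and the piecewise-quadratic $h''(c(\bx);\cdot)$. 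After forming the difference quotient $\Delta_t^2 f(\bx|v)(w')$ and subtracting the analogous quantity for $\bar f$, the $h''$ contribution cancels to leading order since both arguments agree to first order in $t$, while the surviving $O(t^2)$ piece comes from the $h'$ term applied to the $\tfrac{t^2}{2}R(w')$ perturbation. Writing $h'(c(\bx);\cdot)$ as the support function of $\sd h(c(\bx))$ on $\tcone{c(\bx)}{\dom h}$ yields the supremum $\sup_{y\in\sd h(c(\bx))}\ip{y}{R(w)}=\sup_y \ip{w}{\nabla^2(yc)(\bx)w}$; the restriction of this supremum to $Y(\bx,v)$ emerges from requiring the linear-in-$t$ term $t\ip{v}{w'}$ to cancel, which forces $\nabla c(\bx)^\top y=v$.

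The principal obstacle will be rigorously handling the interplay between the polyhedral decomposition $\{C_k\}_{k\in\cK(c(\bx))}$ of $\dom h$ near $c(\bx)$ and the curvature $R(w')$ of $c$. Since the leading-order direction of $c(\bx+tw')-c(\bx)$ is $t\,\nabla c(\bx)w'$ while $R(w')$ appears only at order $t^2$, one must track which pieces $C_k$ the point $c(\bx+tw')$ selects as $t\searrow 0$ along $w'\to w$ and verify that both $\liminf$ and $\limsup$ in the second-subderivative definition deliver the same value. The compactness of $Y(\bx,v)$ proved in the first step is essential here: it ensures the support functional attains its supremum, turning it into the $\max$ in \eqref{eq:secondderivHessianexpansion}. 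The role of \eqref{eq:bcq} is thus twofold — it underwrites the first-order chain rule $\sd f(\bx)=\nabla c(\bx)^\top \sd h(c(\bx))$ and guarantees boundedness of $Y(\bx,v)$ so the outer maximum is attained.
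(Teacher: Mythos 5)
The paper offers no proof of this theorem: it is imported verbatim from Rockafellar--Wets (their Theorem 13.14), so there is nothing internal to compare your argument against. Your first block --- nonemptiness of $Y(\bx,v)$ from the chain rule of \Cref{thm:cvxcompfonc}, convexity from \eqref{eq:plqsubdiff} plus the affine constraint, and compactness because the recession cone of $\sd h(c(\bx))$ equals $\ncone{c(\bx)}{\dom{h}}$ so that \eqref{eq:bcq} forces the recession cone of $Y(\bx,v)$ to be trivial --- is correct and is the standard argument.

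The second block has a genuine gap, and it sits exactly where you point at ``the principal obstacle.'' Since $h'(\bc;\cdot)$ is the support function of $\sd h(\bc)$ restricted to $\tcone{\bc}{\dom{h}}$, it is sublinear, not linear, so you cannot apply it additively to the $\tfrac{t^2}{2}R(w')$ perturbation. What the expansion with $w'=w$ held fixed actually produces is the one-sided directional derivative of that support function at $\nabla c(\bx)w$ in the direction $R(w)$, i.e.\ the support function of the exposed face $F_w:=\argmax_{y\in\sd h(\bc)}\ip{y}{\nabla c(\bx)w}$ evaluated at $R(w)$. For $w$ in the critical cone one has $Y(\bx,v)\subset F_w$ but generally not equality ($F_w$ is cut out by one scalar condition, $Y(\bx,v)$ by the full vector equation $\nabla c(\bx)^\top y=v$), so the naive limit overshoots the right-hand side of \eqref{eq:secondderivHessianexpansion}. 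A concrete check: $h=\max\{z_1,z_2\}$ on $\R^2$, $\nabla c(0)=I$, $v=e_1$, $w=(1,1)$ gives $\lim_{t\searrow0}\Delta_t^2 f(0|v)(w)=\max\{q_1,q_2\}$ with $q_i=\ip{w}{\nabla^2 c_i(0)w}$, whereas the theorem asserts the value $q_1$; the discrepancy is resolved only by the $w'\to w$ in the $\liminf$, approaching $w$ through directions where the exposed face collapses to $Y(\bx,v)$. Your cancellation mechanism (``requiring the linear-in-$t$ term to cancel forces $\nabla c(\bx)^\top y=v$'') therefore delivers only the lower bound $\liminf\Delta_t^2 f(\bx|v)(w')\ge{}$RHS; the matching upper bound requires constructing recovery sequences $t_i\searrow0$, $w^i\to w$ with $c(\bx+t_iw^i)\in\dom{h}$ along which the face shrinks to $Y(\bx,v)$, which is precisely the polyhedral second-order epi-differentiability machinery in Rockafellar--Wets that your sketch defers. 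As written, the proposal establishes one of the two inequalities.
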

The standard development of first- and second-order optimality conditions requires the notion of directions of non-ascent.
\begin{definition}
	\label{def:nonascent}
	Let the \emph{directions of non-ascent} for any proper $f:\R^n\to\eR$ at $x\in\dom{f}$ be denoted by 
	\(
	D(x) := \bset{d\in \R^n}{\dif f(x)(d)\le0}.
	\)
	By \Cref{thm:cvxcompfonc}, if $f$ is convex-composite and $f$ satisfies \eqref{eq:bcq} at $x$, then
	\begin{equation}
	\label{eq:nonascentcone}
	D(x) = \bset{d\in \R^n}{h'(c(x);\nabla c(x)d)\le0}
	\end{equation}
\end{definition}
In the PLQ convex case, \eqref{eq:bcq} ensures that we have the following convenient representation of the set $D(\bx)$.
\begin{lemma}
	Let $f$ be as in \ref{theprogram}, and let $\bx\in\R^n$ be such that $f$ satisfies \eqref{eq:bcq} at $\bx$. Set $\bc := c(\bx)$. Then, $D(\bx)$ is convex and the union of finitely many polyhedral closed convex sets with following the representation:
	\begin{equation}
	\label{eq:plqnonascent}
	\begin{aligned}
	D(\bx) &= \bigcup_{k\in \cK(\bc)} \bset{d}{\nabla c(\bx)d\in \tcone{\bar c}{C_k}, \ip{Q_k\bar c+b_k}{\nabla c(\bx)d}\le0}\\
	&=\bigcup_{k\in \cK(\bc)} \bset{d}
	{
		\begin{aligned}
		& \ip{Q_k\bar c + b_k}{\nabla c(\bx)d} \le 0 \\
		& \ip{a_{kj}}{\nabla c(\bx)d} \le 0, j\in I_k(\bar c)
		\end{aligned}
	}
	\end{aligned}			
	\end{equation}
\end{lemma}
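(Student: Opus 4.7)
The plan is to reduce the claim to a direct computation using two ingredients already in place: \Cref{thm:cvxcompfonc} gives the chain rule $\dif f(\bx)(d)=h'(c(\bx);\nabla c(\bx)d)$ under \eqref{eq:bcq}, and \Cref{plqformulae} describes $h'(\bc;\cdot)$ as a piecewise linear function whose effective domain decomposes as $\bigcup_{k\in\cK(\bc)}\tcone{\bc}{C_k}$ with explicit linear pieces $w\mapsto\langle Q_k\bc+b_k,w\rangle$ on each tangent cone. Setting $v:=\nabla c(\bx)d$, the defining condition $\dif f(\bx)(d)\le 0$ becomes $h'(\bc;v)\le 0$, and everything reduces to unpacking this inequality.

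For the first equality I would argue both inclusions. For $(\supseteq)$, if $d$ lies in the right-hand side, then there exists $k\in\cK(\bc)$ with $v\in\tcone{\bc}{C_k}$ and $\langle Q_k\bc+b_k,v\rangle\le 0$; \Cref{plqformulae} (formula \eqref{eq:plqsubderiv}) then gives $h'(\bc;v)=\langle Q_k\bc+b_k,v\rangle\le 0$, so $d\in D(\bx)$. For $(\subseteq)$, if $d\in D(\bx)$, then $h'(\bc;v)\le 0<+\infty$, hence
\[
v\in\dom{h'(\bc;\cdot)}=\tcone{\bc}{\dom{h}}=\bigcup_{k\in\cK(\bc)}\tcone{\bc}{C_k};
\]
pick any $k\in\cK(\bc)$ with $v\in\tcone{\bc}{C_k}$ and use \eqref{eq:plqsubderiv} again to conclude $\langle Q_k\bc+b_k,v\rangle=h'(\bc;v)\le 0$. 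The second equality is then an immediate substitution of the explicit half-space description of $\tcone{\bc}{C_k}$ from \Cref{thm:polyhedraltconencone}.

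Convexity of $D(\bx)$ does not follow from the union representation directly; instead I would derive it from the outer characterization. The map $d\mapsto h'(\bc;\nabla c(\bx)d)$ is convex, since $h$ is convex (so its subderivative $h'(\bc;\cdot)$ is sublinear, in particular convex, by \Cref{plqformulae}) and $\nabla c(\bx)$ is linear. Thus $D(\bx)$ is the $0$-sublevel set of a convex function and is convex. The finiteness of the union is clear since $\cK(\bc)$ is finite, and each piece is polyhedral because it is the preimage under $\nabla c(\bx)$ of an intersection of finitely many closed half-spaces.

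I do not foresee a real obstacle. The only subtlety worth highlighting is that several pieces in the union may share points $v$ lying in more than one $\tcone{\bc}{C_k}$: the linear forms $\langle Q_k\bc+b_k,\cdot\rangle$ must then agree on the overlap, but this is automatic because all of them equal the single-valued quantity $h'(\bc;v)$ on their common domain, so no compatibility needs to be argued separately.
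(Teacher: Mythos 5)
Your proof is correct and follows essentially the same route as the paper: both arguments reduce $D(\bx)$ via the chain rule to the condition $h'(\bc;\nabla c(\bx)d)\le 0$ and then unpack it using the domain decomposition and formula \eqref{eq:plqsubderiv} from \Cref{plqformulae}. You additionally supply the convexity argument (sublevel set of the convex function $d\mapsto h'(\bc;\nabla c(\bx)d)$) and the substitution giving the second equality, both of which the paper's proof leaves implicit.
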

\begin{proof}
	\noindent$(\subset)$ Suppose $d\in D(\bx)$. By \eqref{eq:nonascentcone}, $\nabla c(\bx) d\in\dom{h'(\bc;\cdot)}$. In particular, by \Cref{plqformulae}, $\nabla c(\bx)d\in \tcone{\bar c}{C_k}$ for some $k\in \cK(\bc)$. By \eqref{eq:plqsubderiv}, we also have
	\(
	\ip{Q_k\bar c+b_k}{\nabla c(\bx) d}=h'(c(\bx);\nabla c(\bx) d)\le0.
	\)
	\\
	$(\supset)$ If $d\in \bigcup_{k\in \cK(\bc)} \bset{d}{\nabla c(\bx)d\in \tcone{c}{C_k}, \ip{Q_k\bar c+b_k}{\nabla c(\bx)d}\le0}$, then for some $k\in \cK(\bc), \nabla c(\bx)d\in \tcone{\bar c}{C_k}$. Then, again by \Cref{plqformulae},  $h'(c(\bx); \nabla c(\bx)d) = \ip{Q_k\bar c+b_k}{\nabla c(\bx)d}\le0$, so $d\in D(\bx)$.
\end{proof}
We now have the tools necessary to rewrite \Cref{thm:sonsc} in the context of piecewise linear-quadratic convex functions $h$.
\begin{theorem}[PLQ second-order necessary and sufficient conditions] \cite[Theorems 13.24(b), 13.14]{rockafellar_wets_1998}, \cite[Theorem 3.4]{rockafellar1989second}.
	\label{thm:plqsonsc}
	Let $h:\R^m\to\eR$ be piecewise linear-quadratic and convex with $\bx\in\dom{f}$ such that $f$ satisfies \eqref{eq:bcq} at $\bx$.
	\begin{enumerate}[label=(\alph*)]
		\item If $f$ has a local minimum at $\bx$, then $0\in \nabla c(\bx)^\top\sd h(c(\bx))$ and
		\[
		h''(c(\bx); \nabla c(\bx)d) + \max\bset{\ip{d}{\nabla^2(yc)(\bx)d}}{y\in M(\bx)}\ge0
		\]
		for all $d\in D(\bx)$.
		\item  If $0\in \nabla c(\bx)^\top \sd h(c(\bx))$ and
		\[
		h''(c(\bx); \nabla c(\bx)d) + \max\bset{\ip{d}{\nabla^2(yc)(\bx)d}}{y\in M(\bx)}>0
		\]
		for all $d\in D(\bx)\setminus\set{0}$, then $\bx$ is a strong local minimizer (see \eqref{eq:SLM}) of $f$.
	\end{enumerate}
\end{theorem}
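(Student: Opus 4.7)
The approach is to combine the generic first- and second-order optimality conditions of Theorem~\ref{thm:fonc} with the composite second-subderivative formula of Theorem~\ref{thm:rockafellarplqcomposite}, using Proposition~\ref{plqformulae} to compute the ``linearized'' piece. First I would dispatch the first-order condition: under \eqref{eq:bcq} the chain rule in Theorem~\ref{thm:cvxcompfonc} gives $\sd f(\bx)=\nabla c(\bx)^\top\sd h(c(\bx))$, so ``$0\in\nabla c(\bx)^\top\sd h(c(\bx))$'' is equivalent to ``$M(\bx)\ne\emptyset$'' and in turn to ``$0\in\sd f(\bx)$''. This is supplied by Theorem~\ref{thm:fonc}(a) for part~(a) and assumed outright for part~(b).

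For the second-order analysis I would specialize Theorem~\ref{thm:rockafellarplqcomposite} at $v=0$, so that $Y(\bx,0)=M(\bx)$ and
\[
\dif{^2}f(\bx|0)(w)=\dif{^2}\bar f(\bx|0)(w)+\max\bigl\{\langle w,\nabla^2(yc)(\bx)w\rangle : y\in M(\bx)\bigr\},
\]
where $\bar f(x):=h(c(\bx)+\nabla c(\bx)[x-\bx])$ is PLQ convex with $0\in\sd\bar f(\bx)$. Applying Proposition~\ref{plqformulae} to $\bar f$ together with formulas \eqref{eq:plq2ndnonneg}--\eqref{eq:plq2ndexpansion} and the affine change of variables through $\nabla c(\bx)$ expresses $\dif{^2}\bar f(\bx|0)(w)$ as $h''(c(\bx);\nabla c(\bx)w)$ on the critical cone of $\bar f$ at $\bx$ for $0$, and as $+\infty$ outside of it. The key reconciliation step is to show that this critical cone coincides with the nonascent set $D(\bx)$ of \eqref{eq:plqnonascent}: on $D(\bx)$ first-order necessity forces $h'(c(\bx);\nabla c(\bx)d)=0$ and the quadratic part of the expansion \eqref{eq:plq2ndexpansion} is precisely $h''(c(\bx);\nabla c(\bx)d)$; off $D(\bx)$ either $\nabla c(\bx)d\notin\tcone{c(\bx)}{\dom h}$ or $h'(c(\bx);\nabla c(\bx)d)>0$ strictly, and in either case the $\Delta_t^2$ ratio defining $\dif{^2}\bar f(\bx|0)(d)$ blows up to $+\infty$.

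With this in hand the assembly is immediate. For (a), Theorem~\ref{thm:fonc}(a) gives $\dif{^2}f(\bx|0)(d)\ge0$ for every $d\in\R^n$; restricting to $d\in D(\bx)$ and substituting the formula above yields the claimed inequality. For (b), the strict inequality assumed on $D(\bx)\setminus\{0\}$ combined with $\dif{^2}f(\bx|0)(d)=+\infty$ for $d\notin D(\bx)$, $d\ne0$, gives $\dif{^2}f(\bx|0)(d)>0$ for every $d\ne0$, so Theorem~\ref{thm:fonc}(c) certifies that $\bx$ is a strong local minimizer. The main obstacle I anticipate is the careful identification of the critical cone appearing in \eqref{eq:plqsecondsubderivxvw} for $\bar f$ with the polyhedrally-described nonascent cone $D(\bx)$, and the rigorous verification that outside $D(\bx)$ the second subderivative of $\bar f$ is $+\infty$; once this correspondence is made precise on each piece $C_k$ active at $c(\bx)$, the rest is bookkeeping among Theorem~\ref{thm:fonc}, Theorem~\ref{thm:rockafellarplqcomposite}, and Proposition~\ref{plqformulae}.
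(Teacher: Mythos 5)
Your proposal is correct and follows exactly the route the paper intends: the theorem is stated as a direct specialization of Theorem~\ref{thm:fonc} combined with the second-subderivative formula \eqref{eq:secondderivHessianexpansion} of Theorem~\ref{thm:rockafellarplqcomposite} and the PLQ calculus of Proposition~\ref{plqformulae} (the paper gives no separate proof, citing \cite{rockafellar_wets_1998} for precisely these ingredients). Your key reconciliation step — that under first-order necessity the critical cone $K(\bx,0)$ of $\bar f$ coincides with $D(\bx)$, with $\dif{^2}\bar f(\bx|0)=+\infty$ off that cone — is the right one, noting only that the cone $K(\bc,y)$ in Proposition~\ref{plqformulae} should be read with $h'(\bc;w)=\ip{y}{w}$ (the first directional derivative), as in the standard Rockafellar--Wets formulation you are implicitly using.
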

\section{Strong Metric Subregularity of the KKT Mapping} \label{sec:subreg}
In this section we establish conditions under which the set-valued mapping \Cref{def:g+G} satisfies strong metric subregularity. 
\begin{definition}[Strong metric subregularity]
	\label{def:subreg}
	A set-valued mapping $S:\R^n\rightrightarrows \R^m$ is \emph{strongly metrically subregular} at $\bx$ for $\by$ if $(\bx,\by)\in\gph{S}$ and there exists $\kappa\ge0$ and a neighborhood $U$ of $\bx$ such that
	\(
	\norm{x-\bx}\le \kappa \dist{\by}{S(x)}\text{ for all }x\in U.
	\)
\end{definition}
Our discussion of strong metric subregularity only requires $f$ to satisfy \eqref{eq:bcq} at $\bx\in\dom{f}$. 
\begin{lemma}
	\label{polyhedralgraph}
	Consider the KKT mapping $g+G$ and the mapping $\cG$ given in \Cref{def:g+G}. Then, strong metric subregularity of $g+G$ at $(\bx, \by)$ for $0$ is equivalent to the property that $(\bar{x},\bar{y})$ is an isolated point of $\cG^{-1}(0)$.
\end{lemma}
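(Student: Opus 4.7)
The plan is to split the claimed equivalence into two classical reductions. \emph{First,} strong metric subregularity of $g+G$ at $(\bx,\by)$ for $0$ is equivalent to strong metric subregularity of the linearization $\cG$ at $(\bx,\by)$ for $0$; this is a first-order perturbation argument using only the $\cC^1$-smoothness of $g$. \emph{Second,} since $\cG$ turns out to be a polyhedral multifunction, Robinson's upper-Lipschitz theorem for such mappings \cite[Proposition 3I.2]{dontchev2014implicit} yields the equivalence between strong metric subregularity of $\cG$ at $(\bx,\by)$ for $0$ and $(\bx,\by)$ being an isolated point of $\cG^{-1}(0)$. Chaining the two equivalences gives the statement.

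To execute the first reduction, set $z=(x,y)$ and $\bz=(\bx,\by)$ and note that $\cG(z)$ and $(g+G)(z)$ differ only by the singleton translation $g(\bz)+\nabla g(\bz)(z-\bz)-g(z)$, whose norm is $o(\|z-\bz\|)$ by $\cC^1$-smoothness of $g$ at $\bz$. Hence $|\dist{0}{\cG(z)}-\dist{0}{(g+G)(z)}|=o(\|z-\bz\|)$, and on a sufficiently small neighborhood of $\bz$ the estimate $\|z-\bz\|\le\kappa\,\dist{0}{(g+G)(z)}$ is equivalent, up to a doubling of the constant, to the analogous estimate for $\cG$, by absorbing the $o$-term into the left-hand side.

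For the second reduction, I would first verify that $\cG$ has polyhedral graph. Since $h$ is PLQ convex, so is $h^\star$ by \cite[Theorem 11.14]{rockafellar_wets_1998}, and therefore $\gph{\sd h^\star}$ is a finite union of polyhedra. This forces $\gph{G}$ to be a finite union of polyhedra, and adding the affine map $(x,y)\mapsto g(\bz)+\nabla g(\bz)((x,y)-\bz)$ preserves this structure since $\gph{\cG}$ is the image of $\gph{G}$ under an affine map. One direction of the equivalence is then immediate: if $\cG$ is strongly metrically subregular at $\bz$ for $0$, any near-$\bz$ element of $\cG^{-1}(0)$ must coincide with $\bz$. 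For the converse, Robinson's theorem provides $\kappa\ge 0$ with $\cG^{-1}(v)\subset\cG^{-1}(0)+\kappa\|v\|\uball$ for all $v$ near $0$. Fixing $\delta>0$ with $\cG^{-1}(0)\cap(\bz+\delta\uball)=\{\bz\}$ and restricting attention to $z$ near $\bz$ with $v\in\cG(z)$ satisfying $\|z-\bz\|+\kappa\|v\|<\delta$, any witness $z'\in\cG^{-1}(0)$ with $\|z-z'\|\le\kappa\|v\|$ must coincide with $\bz$, yielding $\|z-\bz\|\le\kappa\|v\|$ and hence the subregularity estimate $\|z-\bz\|\le\kappa\,\dist{0}{\cG(z)}$.

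The proof is essentially an assembly of standard ingredients; the only mild subtlety is verifying polyhedrality of $\gph{\cG}$, which reduces to the PLQ conjugacy and subdifferential calculus in \cite[Chapter 11]{rockafellar_wets_1998}, together with the elementary observation that affine images preserve finite unions of polyhedra.
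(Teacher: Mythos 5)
Your proof is correct and follows essentially the same route as the paper: reduce strong metric subregularity of $g+G$ to that of the linearization $\cG$ (the paper cites \cite[Corollary 3I.10]{dontchev2014implicit} where you unfold the first-order perturbation argument), then use polyhedrality of $\gph{G}$ via \cite[Theorem 11.14, Proposition 12.30]{rockafellar_wets_1998} and Robinson's outer-Lipschitz theorem for polyhedral mappings to equate subregularity of $\cG$ with $(\bx,\by)$ being isolated in $\cG^{-1}(0)$ (the paper cites \cite[Corollary 3I.11]{dontchev2014implicit} for this step). The only difference is that you spell out the proofs of the two cited corollaries rather than invoking them directly.
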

\begin{proof}By \cite[Corollary 3I.10]{dontchev2014implicit}, strong metric subregularity of $g+G$ at $(\bar{x},\bar{y})$ for 0 is equivalent to strong metric subregularity of the linearization $\cG$ \eqref{eq:linearizedmapping} at $(\bar{x},\bar{y})$.
	
	By \cite[Theorem 11.14, Proposition 12.30]{rockafellar_wets_1998} the mapping $G(x,y)$ is polyhedral; that is, $\gph{G}$ is the union of finitely many polyhedral sets.
	%
	%
	%
	Then \cite[Corollary 3I.11]{dontchev2014implicit} establishes the equivalence of strong metric subregularity of $\cG$ at $(\bx,\by)$ for $0$ and $(\bx,\by)$ being an isolated point of $\cG^{-1}(0)$.
\end{proof}
The main result of this section now follows.
\begin{theorem} Suppose $h:\R^m\to\eR$ is piecewise linear-quadratic and convex with $\bx\in\dom{f}$ such that $f$ satisfies \eqref{eq:bcq} at $\bx$. Then, the following are equivalent:
	\begin{enumerate}
		\item The set $M(\bx):= \Null{\nabla c(\bx)^\top}\cap \sd h(c(\bx))$ in \eqref{optMultipliers} is a singleton and the second-order sufficient conditions of \Cref{thm:plqsonsc} are satisfied at $\bx$;
		\item The mapping $g+G$ is strongly metrically subregular at $(\bar{x},\bar{y})$ for 0 and $\bar{x}$ is a strong local minimizer of $f$.
	\end{enumerate}
\end{theorem}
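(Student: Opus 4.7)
The plan is to prove the two implications separately, using \Cref{polyhedralgraph} in both cases to translate strong metric subregularity of $g+G$ at $(\bar x,\bar y)$ into the statement that $(\bar x,\bar y)$ is an isolated point of $\cG^{-1}(0)$; by polyhedrality of $\gph G$, the latter is equivalent to triviality of the tangent cone $T_{\cG^{-1}(0)}(\bar x,\bar y)$.

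For $(2)\Rightarrow(1)$, the multiplier-singleton property is the easier half. If some $y'\in M(\bar x)$ were distinct from $\bar y$, convexity of $M(\bar x)$ would place the whole segment $y_t:=\bar y+t(y'-\bar y)$ inside $M(\bar x)$; direct substitution into the linearized KKT system shows $(\bar x,y_t)\in\cG^{-1}(0)$ for every $t\in[0,1]$, producing a sequence of distinct solutions converging to $(\bar x,\bar y)$ and contradicting isolation. For the second-order sufficient condition, \Cref{thm:sonsc}(c) supplies $\dif{^2}f(\bar x|0)(w)>0$ for every $w\ne 0$; combining \Cref{thm:rockafellarplqcomposite} with $Y(\bar x,0)=M(\bar x)=\{\bar y\}$ and the PLQ second-subderivative formula of \Cref{plqformulae} applied to the linearized composite $\bar f(x):=h(c(\bar x)+\nabla c(\bar x)(x-\bar x))$ rewrites this positivity as exactly the strict inequality of \Cref{thm:plqsonsc}(b) over $D(\bar x)\setminus\{0\}$.

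The content of $(1)\Rightarrow(2)$ is the isolation argument. The strong local minimizer claim is immediate from \Cref{thm:plqsonsc}(b). For isolation, pick $(d,\eta)\in T_{\cG^{-1}(0)}(\bar x,\bar y)$; using $\nabla c(\bar x)^\top\bar y=0$, the linearized KKT conditions defining $\cG^{-1}(0)$ yield
\[
\nabla^2(\bar y c)(\bar x)d+\nabla c(\bar x)^\top\eta=0,\qquad (\nabla c(\bar x)d,\eta)\in T_{\gph\sd h}(c(\bar x),\bar y).
\]
The central analytic step is to invoke proto-differentiability of the PLQ convex subdifferential (\cite{rockafellar1989second} and Rockafellar-Wets Exercise 13.45) to identify this second tangent cone with $\gph\sd\bigl(\tfrac12\dif{^2}h(c(\bar x)|\bar y)\bigr)$. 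Because $\tfrac12\dif{^2}h(c(\bar x)|\bar y)$ is convex and positively homogeneous of degree two on its effective domain, Euler's identity for convex 2-homogeneous functions yields $\ip{\eta}{\nabla c(\bar x)d}=h''(c(\bar x);\nabla c(\bar x)d)$, while $\nabla c(\bar x)d\in K(c(\bar x),\bar y)$ forces $h'(c(\bar x);\nabla c(\bar x)d)=\ip{\bar y}{\nabla c(\bar x)d}=0$, so $d\in D(\bar x)$. Pairing the first equation with $d$ and substituting gives
\[
h''(c(\bar x);\nabla c(\bar x)d)+\ip{d}{\nabla^2(\bar y c)(\bar x)d}=0,
\]
which, since $M(\bar x)=\{\bar y\}$, contradicts the strict SOSC unless $d=0$. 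When $d=0$, local polyhedrality of $\gph\sd h$ promotes $(0,\eta)\in T_{\gph\sd h}(c(\bar x),\bar y)$ to the actual membership $\bar y+t\eta\in\sd h(c(\bar x))$ for all sufficiently small $t>0$; combined with $\nabla c(\bar x)^\top\eta=0$, this places $\bar y+t\eta\in M(\bar x)=\{\bar y\}$ and so forces $\eta=0$.

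The main obstacle is the middle step of the second implication: correctly invoking proto-differentiability of $\sd h$ for PLQ convex data and distilling from it both the membership $\nabla c(\bar x)d\in K(c(\bar x),\bar y)$ and the Euler-type identity for $\tfrac12\dif{^2}h(c(\bar x)|\bar y)$. This identity is the bridge that converts an abstract tangent-cone inclusion into the precise scalar equation that the strict second-order sufficient condition is designed to rule out.
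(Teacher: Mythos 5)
Your proposal is correct, and the $(2)\Rightarrow(1)$ half runs on essentially the same rails as the paper's: the segment argument for uniqueness of the multiplier is identical, and the derivation of the strict second-order inequality from \Cref{thm:sonsc}(c), \Cref{thm:rockafellarplqcomposite} with $Y(\bar x,0)=\{\bar y\}$, and the PLQ formulas of \Cref{plqformulae} matches the paper's, though the paper is more careful about the identification $\dif^2\bar f(\bar x|0)(w)=h''(c(\bar x);\nabla c(\bar x)w)$ for $w\in D(\bar x)$, which it secures via the explicit expansion \eqref{eq:plq2ndexpansion} and a limit-in-$\tau$-only argument rather than asserting it outright.

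Where you genuinely diverge is $(1)\Rightarrow(2)$. The paper runs a sequential contradiction: assume a nontrivial sequence $(d^i,y^i)\to(0,\bar y)$ in $\cG^{-1}(0)$, normalize $v^i=d^i/\|d^i\|$, pin the limit direction $\bar v$ to a single active polyhedron $C_{k_0}$ via \eqref{activesubsequence}, and then combine the primal--dual pairing \eqref{eq:quadformprimaldual} with the normal-cone inequality from \eqref{eq:plqsubdiff} to squeeze out $0\ge \bar v^\top H\bar v+\bar v^\top\nabla c(\bar x)^\top Q_{k_0}\nabla c(\bar x)\bar v$, contradicting SOSC. You instead reduce isolation to triviality of $T_{\cG^{-1}(0)}(\bar x,\bar y)$ (true for any set, polyhedrality isn't needed for that equivalence), observe that a tangent vector $(d,\eta)$ yields $Hd+\nabla c(\bar x)^\top\eta=0$ together with $(\nabla c(\bar x)d,\eta)\in T_{\gph\sd h}(c(\bar x),\bar y)$, and then invoke proto-differentiability of the PLQ convex subdifferential (Rockafellar--Wets, the fully-amenable case) to identify that tangent cone with $\gph\sd\bigl(\tfrac12\dif^2 h(c(\bar x)|\bar y)\bigr)$. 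The Euler identity for the convex, positively 2-homogeneous function $\tfrac12\dif^2 h(c(\bar x)|\bar y)$ then converts the abstract inclusion into the scalar identity $\langle\eta,\nabla c(\bar x)d\rangle=h''(c(\bar x);\nabla c(\bar x)d)$, and the rest is the same pairing-with-$d$ contradiction. This is a real structural difference: the paper's route is elementary and self-contained (it never mentions proto-derivatives), whereas your route concentrates the difficulty in one citation and trades the subsequence bookkeeping for the cleaner algebra of graphical differentiation. Your approach also makes the $d=0$ endgame transparent (polyhedrality of $\gph\sd h$ promotes a tangent direction to an actual feasible direction, so $\bar y+t\eta\in M(\bar x)=\{\bar y\}$), which parallels the paper's ``without loss of generality $d^i=0$'' step but is arguably more conceptual. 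Both are valid proofs; the paper's is longer but lower-tech, yours is shorter at the cost of a heavier external appeal.
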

\begin{proof}
	For a point $x\in\dom{f}$, define $\Delta f(x;d) := h(c(x)+\nabla c(x)d)-h(c(x))$.
	
	$(\Rightarrow)$ By \Cref{polyhedralgraph} we argue strong metric subregularity of $g+G$ at $(\bx, \by)$ for 0 by showing that there is a neighborhood of $(\bx, \by)$ on which $(\bx, \by)$ is the unique solution to the generalized equation $\cG\ni 0$ \eqref{eq:linearizedmapping}.
%
	After the change of variables $d:=x-\bx$, we show that there is a neighborhood 
	$U$ of $(0, \by)$ such that $(d,y)=(0,\bar{y})$ is the unique solution to the generalized equation
	\begin{align}
	&H d + \nabla c(\bx)^\top y = 0 \label{eq:ge1}\\
	&c(\bx) + \nabla c(\bx)d \in \sd h^\star (y)\quad (\Leftrightarrow y\in \sd h(c(\bx) + \nabla c(\bx)d))\label{eq:ge2},
	\end{align}
	where $H:=\nabla_{xx}^2L(\bx,\by)$.
	Suppose there is no such neighborhood. 
	Then, there exists a sequence of vectors $\{(d^i, y^i)\}_{i\in\bN}$ converging to $(0, \by)$ with $(d^i,y^i)\ne(0,\by)$ that solve the generalized equation \eqref{eq:ge1}, \eqref{eq:ge2}. First assume $d^i \ne 0$ for all $i\in\bN$. Define for each $i\in\bN,\, t_i := \norm{d^i}, v^i := d^i/\norm{d^i}$, and assume without loss of generality that $v^i\to \bar v$ and that 
	\begin{align}
	\label{activesubsequence}
	\set{c(\bx) + \nabla c(\bx)d^i}_{i\in\bN}\subset C_{k_0}\text{ for some }k_0\in K(c(\bx) + \nabla c(\bx)d^i)\subset K(\bc),
	\end{align} 
	since $d^i\to0$. Taking the inner product on both sides of \eqref{eq:ge1} with $d^i$, we obtain
	\begin{equation}
	\label{eq:quadformprimaldual}
	0 = \ip{d^i}{H d^i} + \ip{d^i}{\nabla c(\bx)^\top y^i} \text{ for all }i\in\bN.
	\end{equation}
	The subgradient inequality for $h$ at $c(\bx) + \nabla c(\bx)d^i$ with subgradient $y_i$ gives
	\begin{equation}
	\label{eq:delfbounds}
	\Delta f(\bx; d^i)\le \ip{d^i}{\nabla c(\bx)^\top y_i} = -\ip{d^i}{H d^i}.
	\end{equation}
	Dividing through by $t_i>0$ and letting $i\to\infty$,
	\(
	\dif f(\bx)(\bv) \le \liminf_i\frac{\Delta f(\bx; t_i v^i)}{t_i}.
	\)
	Hence by \eqref{eq:bcq}, \Cref{thm:rockafellarplqcomposite} and \eqref{eq:delfbounds},
	\(
	h'(c(\bx);\nabla c(\bx)\bv) = \dif f(\bx)(\bv) \le \lim_i -\ip{v^i}{H d^i} = 0,
	\)
	and so $\bv\in D(\bx)\setminus\set{0}$.	By second-order sufficiency, $h''(c(\bx);\nabla c(\bx)\bar v) + \bv^\top H \bv  > 0$. We now show $\nabla c(\bx)\bar v \in \tcone{\bar c}{C_{k_0}}$. By \eqref{activesubsequence} and the computation
	\(
	\frac{c(\bx)+\nabla c(\bx)d^i-c(\bx)}{t_i}=\nabla c(\bx)v^i\to \nabla c(\bx)\bar v \in \tcone{\bar c}{C_{k_0}}.
	\)
	Then by \eqref{eq:plq2ndnonneg}, $h''(c(\bx);\nabla c(\bx)\bar v) = \bar v^\top \nabla c(\bx)^\top Q_{k_0}\nabla c(\bx)\bar v$, so that
	\begin{equation}
	\bar v^\top H \bar v + \bar v^\top \nabla c(\bx)^\top Q_{k_0}\nabla c(\bx)\bar v > 0.
	\end{equation}
	On the other hand, by \eqref{eq:plqsubdiff},
	\[
	y^i\!\in\!\sd h(c(\bx)\!+\!\nabla c(\bx)d^i)\!=\!\bigcap_{k\!\in\!\cK(c(\bx)\!+\!\nabla c(\bx)d^i)} \bset{y}{y-Q_k(c(\bx) + \nabla c(\bx)d^i)-b_k\in \ncone{c(\bx) + \nabla c(\bx)d^i}{C_k}},
	\]
	and so $y^i-Q_{k_0}(c(\bx) + \nabla c(\bx)d^i)-b_{k_0}\in \ncone{c(\bx) + \nabla c(\bx)d^i}{C_{k_0}}$ for all $i\in \bN$. Since $c(\bx)\in C_{k_0}$, we have
	\begin{align*}
	0&\ge \ip{y^i-[Q_{k_0}(c(\bx) + \nabla c(\bx)d^i)+b_{k_0}]}{c(\bx) - [c(\bx) + \nabla c(\bx)d^i]}\\
	&= \ip{y^i-Q_{k_0}(c(\bx) + \nabla c(\bx)d^i)-b_{k_0}}{-\nabla c(\bx)d^i} \\
	&= -\ip{d^i}{\nabla c(\bx)^\top y^i} + \ip{Q_{k_0}(c(\bx) + \nabla c(\bx)d^i)+b_{k_0}}{\nabla c(\bx)d^i}.
	\end{align*}	
	Together with \eqref{eq:quadformprimaldual},
	\begin{align*}
	0 & \ge \ip{d^i}{H d^i} + \ip{Q_{k_0}(c(\bx) + \nabla c(\bx)d^i)+b_{k_0}}{\nabla c(\bx)d^i} \\
	&= \ip{d^i}{H d^i} + \ip{\nabla c(\bx)d^i}{Q_{k_0} \nabla c(\bx)d^i} + \ip{Q_{k_0}c(\bx)+b_{k_0}}{\nabla c(\bx)d^i}\\
	&= \ip{d^i}{H d^i} + \ip{\nabla c(\bx)d^i}{Q_{k_0} \nabla c(\bx)d^i} + h'(c(\bx);\nabla c(\bx)d^i) \text{ (by \eqref{eq:plqsubderiv})}\\
	&\ge \ip{d^i}{H d^i} + \ip{\nabla c(\bx)d^i}{Q_{k_0} \nabla c(\bx)d^i},
	\end{align*}
	where the final inequality follows from \Cref{thm:fonc}, \Cref{thm:plqsonsc}, and the observation that
	\(
	\nabla c(\bx)d^i \in C_{k_0}-c(\bx)\subset\tcone{c(\bx)}{C_{k_0}}.
	\)
	Next, divide the inequality
	\(
	0 \ge \ip{d^i}{Hd^i} + \ip{\nabla c(\bx)d^i}{Q_{k_0} \nabla c(\bx)d^i}
	\)
	by $t_i^2$ and let $i\to \infty$ to yield the contradiction
	\(
	0\ge \bar v^\top H \bar v + \bar v^\top \nabla c(\bx)^\top Q_k\nabla c(\bx)\bar v > 0.
	\)
	\\
	Consequently, $d^i = 0$ for all $i$ sufficiently large, so without loss of generality, we now suppose $d^i=0$ for all $i\in \bN$. Hence by hypothesis, and $y^i\ne\by$ for all $i\in \bN$. But then we contradict uniqueness of $M(\bx)$. 
	\\
	$(\Leftarrow)$ By \Cref{polyhedralgraph}, $(\bx,\by)$ is an isolated point of $\cG^{-1}(0)$. That is, there is a neighborhood $U$ of $(\bar{x},\bar{y})$ on which  
	$(\bar{x},\bar{y})$ is the unique solution to the generalized equation
	\begin{align*}
	&H(x - \bx) + \nabla c(\bx)^\top y  = 0 \\
	&c(\bx) + \nabla c(\bx)(x - \bx) \in \sd h^\star (y).
	\end{align*}
	For $x=\bar{x}$, this implies there is a neighborhood $U_{\bar{y}}$ about $\bar{y}$ such that
	\begin{equation}\label{eq:y unique}
	U_{\bar{y}} \cap M(\bx) = \{\by\}.
	\end{equation}
	Suppose there is $y\in \left(M(\bx)\right)\setminus  U_{\bar{y}}$.
	Then $y_t = (1-t) \by + t y\in M(\bx)$ for $t\in [0,1]$.
	But for $t$ small, $y_t\in U_{\bar{y}} \cap M(\bx)$,
	which contradicts \eqref{eq:y unique}, so $M(\bx)$ is the singleton $\set{\by}$. Therefore, it only remains to show that the second-order sufficient conditions of \Cref{thm:plqsonsc} are satisfied at $\bx$.
	\\	
	Since $\bx$ is local minimizer of $f$ at which $f$ satisfies \eqref{eq:bcq}, \Cref{thm:cvxcompfonc} gives $0\in \nabla c(\bx)^\top \sd h(c(\bx))$ and $h'(c(\bx);\nabla c(\bx)d) \ge 0$ for all $d\in \R^n$. Let $\bd\in \R^n\setminus\{0\}$ with $h'(c(\bx);\nabla c(\bx)d) = 0$, or equivalently, $\bd \in D(\bx)$. Without loss of generality, suppose $\norm{\bd}=1$. In particular, by \eqref{eq:plqnonascent}, there exists $k_0\in K(\bc)$ such that 
	\begin{equation}
	\label{eq:dbxtcone}
	\nabla c(\bx)\bd \in \tcone{\bc}{C_{k_0}}\text{ and }\ip{Q_{k_0}\bc + b_{k_0}}{\nabla c(\bx)\bd} =  h'(c(\bx);\nabla c(\bx)\bd)=0		
	\end{equation}
	\\	 
	Since $h$ is PLQ convex, the second-order necessary conditions of \Cref{thm:plqsonsc} imply  $ h''(c(\bx);\nabla c(\bx)\bar d) + \bar d^\top H\bar d\ge 0$. 
	\\	
	We show this inequality is strict to complete the proof. Suppose to the contrary that 
	\begin{equation}
	\label{eq:subregconecontradiction}
	h''(c(\bx);\nabla c(\bx)\bar d) + \bar d^\top H\bar d = 0.
	\end{equation}
	Then, $\bar d\ne 0$ solves the program
	\begin{mini*}
		{d}{h'(c(\bx);\nabla c(\bx)d) + \frac{1}{2}h''(c(\bx);\nabla c(\bx)d) + \frac{1}{2}d^\top H d}{}{}
		\addConstraint{d}{\in D(\bx).}
	\end{mini*}
	By \eqref{eq:plq2ndexpansion} and continuity of $d\mapsto c(\bx) + \nabla c(\bx)d$, there exists $\epsilon>0$ so that
	\[
	\Delta f(\bx;d) =  h'(c(\bx);\nabla c(\bx)d) + \frac{1}{2}h''(c(\bx);\nabla c(\bx)d)\text{ for }d\in \eps\bB\cap \bset{d}{c(\bx) + \nabla c(\bx)d\in \dom{h}}.
	\]
	By \eqref{eq:dbxtcone} and polyhedrality, $c(\bx) + t\nabla c(\bx)\bd\in \dom{h}$ for sufficiently small $t>0$. It follows, after shrinking $\eps>0$ if necessary, that
	\begin{equation}
	\label{eq:bimpliesadelta=0}
	\Delta f(\bx; t\bd) + \frac{t^2}{2}\bd^\top H \bd = 0 \text{ for all }0\le t<\eps.
	\end{equation}
	Since $0\in \sd f(\bx)$ and $f$ satisfies \eqref{eq:bcq} at $\bx$, \eqref{eq:secondderivHessianexpansion} with $v=0,\ y=\by,$ and $w\in\R^n$ gives
	\(
	\dif{^2} f(\bx|0)(w) = \dif{^2} \bar{f}(\bx|0)(w) + w^\top Hw,
	\)
	where $\bar f$ is also piecewise linear-quadratic by the discussion following \eqref{eq:secondderivHessianexpansion}.
	Since $\bx$ is a strong local minimizer,
	\[
	\dif{^2} f(\bx|0)(w) = \liminf_{\substack{\tau\searrow0\\ w'\to w}}\frac{f(\bx + tw') - f(\bx)}{\frac{1}{2}\tau^2} \ge \liminf_{\substack{\tau\searrow0\\ w'\to w}}\gamma \norm{w'}^2 = \gamma \norm{w}^2 \text{ (see  \Cref{thm:sonsc})}.
	\]
	Then, we have
	\(
	\dif{^2} f(\bx|0)(w) = \dif{^2} \bar{f}(\bx|0)(w) + w^\top H w \ge \gamma \norm{w}^2.
	\)
	By \eqref{eq:plqsecondsubderivxvw} the $\liminf$ defining $\dif{^2} \bar{f}(\bx|0)(w)$ is also expressed as a limit only in $\tau$ (because $\bar f$ is piecewise linear-quadratic), so
	\[
	\dif{^2} \bar f(\bx|0)(w) = \lim_{\tau\searrow0}\frac{\bar f(\bx + \tau w) - \bar f(\bx)}{\frac{1}{2}\tau^2} = \lim_{\tau\searrow0} \frac{\Delta f(\bx; \tau w)}{\frac{1}{2}\tau^2}.
	\]
	Putting the last two observations together,
	\(
	\dif{^2} f(\bx|0)(w) = \lim_{\tau\searrow0} \frac{\Delta f(\bx; \tau w)}{\frac{1}{2}\tau^2} + w^\top H w \ge \gamma\norm{w}^2.
	\)
	But, for $0<\tau<\eps$ and $w=\bd$, \eqref{eq:bimpliesadelta=0} gives the contradiction
	\(
	0 = \lim_{\tau\searrow0} \set{\frac{\Delta f(\bx; \tau \bd) + \frac{\tau^2}{2}\bd^\top H \bd}{\frac{1}{2}\tau^2}}
	= \dif{^2} f(\bx|0)(\bd)
	\ge \gamma\norm{\bd}^2=\gamma>0.
	\)	
\end{proof}
\subsection{Application: superlinear convergence of quasi-Newton methods}
\label{subsec:qnmethods}
Let $f$ and $g+G$ be given by \Cref{def:g+G} and consider the corresponding quasi-Newton method \eqref{eq:geqqn} initialized at $(x^0,y^0)$. In this section, we assume the $\mathbf{B}_k$ defined in \eqref{eq:geqqn} take the form
\begin{equation}
\label{eq:qncB=b}	
\mathbf{B}_k = \begin{pmatrix}
B_k & \nabla c(x^k)^\top \\
-\nabla c(x^k) & 0
\end{pmatrix}.
\end{equation}
This choice allows us to relate the optimality conditions for the subproblems \ref{qnsubproblems} defined in \Cref{lem:qnoptimalityiffgeqqn} for solving \ref{theprogram} to the quasi-Newton method of \eqref{eq:geqqn}.
As in \Cref{sec:cvxcomp}, the following is immediate:
\begin{lemma}\label{lem:qnoptimalityiffgeqqn}
	Let $f$ be convex-composite, and let $(x^k,y^k)\in\R^n\times\R^m$ be such that $f$ satisfies \eqref{eq:bcq} at $x^k$, let  $B_k\in\R^{n\times n}$. Then, $(d^k,y^{k+1})\in\R^n\times\R^m$ satisfy the optimality conditions for
	\begin{mini}|l|
		{d\in\R^n}{h(c(x^k) + \nabla c(x^k)d) + \frac{1}{2}d^\top B_kd}{\tag{$\mathbf{Q}_k$}}{\label{qnsubproblems}}
	\end{mini}
	if and only if $(x^{k+1},y^{k+1})$ satisfy the quasi-Newton update for $g+G$ given by \Cref{def:g+G}, with the choice \eqref{eq:qncB=b}. Namely,
	\(
	0 \in g(x^k,y^k) + \mathbf{B}_k\begin{pmatrix}
	x^{k+1} - x^k \\ y^{k+1} - y^k
	\end{pmatrix} + G(x^{k+1},y^{k+1}),
	\)
	where $x^{k+1}:= x^k + d^k$.
\end{lemma}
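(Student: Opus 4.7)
The strategy is a direct expansion of both sides of the biconditional in explicit block form, paralleling the Newton analogue stated just before Section 4 and relying only on convex-composite first-order calculus (\Cref{thm:cvxcompfonc}) together with Legendre--Fenchel conjugate duality.

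First I would apply \Cref{thm:cvxcompfonc} to the composite term $d \mapsto h(c(x^k) + \nabla c(x^k)d)$ appearing in \ref{qnsubproblems}. Because $f$ satisfies \eqref{eq:bcq} at $x^k$ and this subproblem is built from the same linear operator $\nabla c(x^k)$, the chain rule
\[
\sd\bigl[h(c(x^k) + \nabla c(x^k)\cdot)\bigr](d^k) = \nabla c(x^k)^\top \sd h(c(x^k) + \nabla c(x^k)d^k)
\]
is available. Combining this with Fermat's rule (\Cref{thm:fonc}(a)) applied to the sum of the composite term and the smooth quadratic $\tfrac{1}{2}d^\top B_k d$, whose gradient at $d^k$ is $B_k d^k$, the pair $(d^k, y^{k+1})$ satisfies the first-order optimality conditions for \ref{qnsubproblems} if and only if
\[
B_k d^k + \nabla c(x^k)^\top y^{k+1} = 0 \quad\text{and}\quad y^{k+1} \in \sd h(c(x^k) + \nabla c(x^k)d^k).
\]

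Next I would substitute the explicit expressions for $g$ and $G$ from \Cref{def:g+G} and the block form of $\mathbf{B}_k$ from \eqref{eq:qncB=b} into the quasi-Newton update \eqref{eq:geqqn}, then replace $x^{k+1}-x^k$ with $d^k$. The top block collapses, after cancelling the two contributions involving $\nabla c(x^k)^\top y^k$, to $B_k d^k + \nabla c(x^k)^\top y^{k+1} = 0$, while the bottom block reads $c(x^k) + \nabla c(x^k) d^k \in \sd h^\star(y^{k+1})$. Invoking the Legendre--Fenchel inversion identity $\sd h^\star = (\sd h)^{-1}$ valid for closed proper convex $h$, this inclusion is equivalent to $y^{k+1} \in \sd h(c(x^k) + \nabla c(x^k)d^k)$.

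The two systems are now term-by-term identical, and the biconditional follows. The only point needing a brief justification is that the chain rule above applies at the specific point $d^k$ rather than at $0$; this is handled exactly as in the Newton analogue and is enabled by the \eqref{eq:bcq} hypothesis being imposed on the common linearization $\nabla c(x^k)$ that appears in both the subproblem and the update equation. I do not anticipate any substantive obstacle beyond this routine bookkeeping, which is precisely why the authors flag the result as immediate.
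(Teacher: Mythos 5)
Your proposal is correct and follows exactly the route the paper intends: the paper gives no written proof (it declares the result ``immediate'' from \Cref{thm:cvxcompfonc}), and the intended argument is precisely your block-by-block expansion using the chain rule under \eqref{eq:bcq} together with the inversion $\sd h^\star = (\sd h)^{-1}$ already used in \eqref{eq:GEQstationarity} and \eqref{eq:ge2}. The one point you flag --- validity of the chain rule at $d^k$ rather than at $0$ --- is handled in the paper's appendix lemma on the model function, where $v_1\in\Null{\nabla c(x^k)^\top}$ forces $\ip{v_1}{\nabla c(x^k)d^k}=0$ and hence reduces the constraint qualification at $c(x^k)+\nabla c(x^k)d^k$ to the assumed one at $c(x^k)$, exactly as you anticipate.
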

As a consequence of strong metric subregularity of the linearization $\cG$ given by \eqref{eq:linearizedmapping}, we have the following convergence result:
\begin{theorem} \cite[Dennis-Mor\'e Theorem for Generalized Equations]{dontchev2014implicit}
	\,Let $(\bx,\by)$ be a solution of $g+G\ni0$ given by \Cref{def:g+G} and let $U$ be a neighborhood of $(\bx,\by)$. For some starting point $(x^0,y^0)\in U$ consider a sequence $\set{(x^k,y^k)}_{k\in\bN}$ generated by \eqref{eq:geqqn} which remains in $U$ for all $k\in\bN$ and satisfies $(x^k,y^k)\ne(\bx,\by)$ for all $k\in \bN$. Define $\mathbf{E}_k := \mathbf{B}_k - \nabla g(\bx,\by)$ and $s^k := (x^{k+1}-x^k, y^{k+1}-y^k)$. 
	If the linearization mapping $\cG$ given by \eqref{eq:linearizedmapping} is strongly metrically subregular at $(\bx,\by)$ for 0 and the sequence $\set{(x^k,y^k)}_{k\in\bN}$ satisfies
	\(
	(x^k,y^k)\to(\bx,\by) \text{ and } 
	\mathbf{E}_ks^k = o(||s^k||)
	\)
	then $(x^k,y^k)\to(\bx,\by)$ superlinearly.
\end{theorem}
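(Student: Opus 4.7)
The plan is to adapt the classical generalized-equation Dennis--Mor\'e argument (as presented in \cite{dontchev2014implicit}) by combining a Taylor expansion of the smooth part $g$ around $\bz:=(\bx,\by)$ with the strong metric subregularity of $\cG$ at $\bz$ for $0$. Throughout I write $z^k := (x^k,y^k)$ and $s^k := z^{k+1}-z^k$, and recall that the quasi-Newton iterate from \eqref{eq:geqqn} satisfies
\begin{equation}\label{eq:qnincl}
-g(z^k) - \mathbf{B}_k s^k \in G(z^{k+1}).
\end{equation}

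First I would construct an element of $\cG(z^{k+1})$ whose norm can be controlled by $\mathbf{E}_k s^k$. Using \eqref{eq:qnincl}, the residual
\[
r_k \;:=\; g(\bz) + \nabla g(\bz)(z^{k+1}-\bz) - g(z^k) - \mathbf{B}_k s^k
\]
belongs to $g(\bz) + \nabla g(\bz)(z^{k+1}-\bz) + G(z^{k+1}) = \cG(z^{k+1})$. Since $g$ is $\cC^1$ (indeed $\cC^2$ by the hypotheses on $c$), Taylor expansion at $\bz$ gives $g(z^k) = g(\bz)+\nabla g(\bz)(z^k-\bz) + o(\|z^k-\bz\|)$. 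Substituting and regrouping,
\[
r_k \;=\; \nabla g(\bz)(z^{k+1}-z^k) - \mathbf{B}_k s^k + o(\|z^k-\bz\|) \;=\; -\mathbf{E}_k s^k + o(\|z^k-\bz\|).
\]

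Next I would invoke strong metric subregularity. Since $(x^k,y^k)\in U$ with $U$ a neighborhood of $\bz$, for $k$ sufficiently large $z^{k+1}$ also lies in a neighborhood on which the subregularity estimate holds (this is where we use $z^k\to\bz$, which is forced by the residual bound itself via a standard bootstrapping argument). Strong metric subregularity of $\cG$ at $\bz$ for $0$ together with $0\in\cG(\bz)$ and $r_k\in\cG(z^{k+1})$ yields
\[
\|z^{k+1}-\bz\| \;\le\; \kappa\,\dist{0}{\cG(z^{k+1})} \;\le\; \kappa\|r_k\| \;\le\; \kappa\|\mathbf{E}_k s^k\| + o(\|z^k-\bz\|).
\]
Using the Dennis--Mor\'e hypothesis $\mathbf{E}_k s^k = o(\|s^k\|)$ and the triangle inequality $\|s^k\|\le\|z^{k+1}-\bz\|+\|z^k-\bz\|$, we obtain a relation of the form
\[
\|z^{k+1}-\bz\| \;\le\; \eta_k\bigl(\|z^{k+1}-\bz\|+\|z^k-\bz\|\bigr) + o(\|z^k-\bz\|),
\]
with $\eta_k\downarrow 0$. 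Absorbing the $\eta_k\|z^{k+1}-\bz\|$ term on the left and dividing by $1-\eta_k$ gives $\|z^{k+1}-\bz\|=o(\|z^k-\bz\|)$, which is exactly superlinear convergence.

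The main delicate point is handling the term $\|s^k\|$ on the right: a naive bound by $\|z^{k+1}-\bz\|+\|z^k-\bz\|$ still contains $\|z^{k+1}-\bz\|$, so one must be careful that the coefficient $\eta_k$ multiplying it tends to zero (so that $(1-\eta_k)^{-1}$ remains bounded) and that $z^k$ has not already coincided with $\bz$ (ruled out by hypothesis). A minor technical point is to verify that the iterates stay in the neighborhood on which the subregularity bound applies; this is handled either by shrinking $U$ up front or by induction, using the superlinear estimate itself to show that $\|z^{k+1}-\bz\|\le\|z^k-\bz\|$ for $k$ large. Once these bookkeeping issues are sorted, the proof is essentially three lines: Taylor-expand, invoke subregularity, absorb.
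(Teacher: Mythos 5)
Your argument is correct and is essentially the standard proof of the generalized-equation Dennis--Mor\'e theorem from the cited reference; the paper itself states this result as a citation to \cite{dontchev2014implicit} without reproducing a proof, and your residual construction $r_k\in\cG(z^{k+1})$, Taylor expansion, subregularity bound, and absorption of the $\eta_k\|z^{k+1}-\bz\|$ term match that argument. One small remark: convergence $(x^k,y^k)\to(\bx,\by)$ is a hypothesis of the theorem, so your parenthetical claim that it is ``forced by the residual bound via bootstrapping'' is unnecessary (and would not hold without first guaranteeing the iterates stay in the subregularity neighborhood), but since you invoke it only as an assumption the proof stands.
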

\begin{remark}	Suppose the function $g$ is $\cC^1$-smooth and $(x^k,y^k)\to(\bx,\by)$. Then,
	\(
	\mathbf{E}_ks^k = o(||s^k||) \Longleftrightarrow [\mathbf{B}_k-\nabla g(x^k,y^k)]s^k = o(||s^k||).
	\)
\end{remark}
The following corollary is of algorithmic significance.
\begin{corollary}\label{cor:qnMethod}
	Let $f$ be as in \ref{theprogram}. Suppose $M(\bx)=\set{\by}$ and the second-order sufficient conditions of \Cref{thm:plqsonsc} are satisfied at $\bx$. Then, $(\bx,\by)$ solves $0\in g(\bx,\by)+G(\bx,\by)$. Moreover, there exists a neighborhood $U$ of $(\bx,\by)$ such that if $(x^0,y^0)\in U$, the sequence $\set{(x^k,y^k)}_{k\in\bN}$ generated from the optimality conditions for \ref{qnsubproblems} remains in $U$ with $(x^k,y^k)\neq(\bx,\by)$ for all $k\in\bN$, and
	\[
	(x^k,y^k)\to(\bx,\by)\text{ and }(B_k-\nabla^2(y^kc)(x^k))[x^{k+1}-x^k]=o(||s^k||),
	\]
	then $(x^k,y^k)\to(\bx,\by)$ superlinearly.
\end{corollary}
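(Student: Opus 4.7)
The plan is to assemble the corollary by composing three earlier results in this section: the equivalence theorem immediately preceding \Cref{subsec:qnmethods}, \Cref{lem:qnoptimalityiffgeqqn} identifying iterates generated by the optimality conditions for \ref{qnsubproblems} with the quasi-Newton iteration \eqref{eq:geqqn} under the block choice \eqref{eq:qncB=b}, and the Dennis--Mor\'e theorem stated above. First, the hypothesis $M(\bx)=\set{\by}$ gives $\by\in\sd h(c(\bx))$ and $\nabla c(\bx)^\top\by=0$, which by \eqref{eq:GEQstationarity} is exactly $0\in g(\bx,\by)+G(\bx,\by)$. This settles the first assertion of the corollary.

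Next, with $M(\bx)=\set{\by}$ and the second-order sufficient conditions in force, the main theorem of the section yields strong metric subregularity of $g+G$ at $(\bx,\by)$ for $0$. By \Cref{polyhedralgraph}, this is equivalent to strong metric subregularity of the linearization $\cG$ from \eqref{eq:linearizedmapping} at $(\bx,\by)$ for $0$, which is the central hypothesis of the Dennis--Mor\'e theorem. \Cref{lem:qnoptimalityiffgeqqn} then identifies the sequence generated from the optimality conditions for \ref{qnsubproblems} with the quasi-Newton iteration \eqref{eq:geqqn} using $\mathbf{B}_k$ from \eqref{eq:qncB=b}. I would take $U$ to be any neighborhood of $(\bx,\by)$ on which both the subregularity estimate and the Dennis--Mor\'e theorem apply, and simply invoke the latter.

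What remains is to match the Dennis--Mor\'e convergence condition $\mathbf{E}_ks^k=o(\|s^k\|)$ with the stated primal-space condition $(B_k-\nabla^2(y^kc)(x^k))[x^{k+1}-x^k]=o(\|s^k\|)$. Using the remark after the Dennis--Mor\'e theorem, which allows us to substitute $\nabla g(x^k,y^k)$ for $\nabla g(\bx,\by)$ since $g$ is $\cC^1$ and $(x^k,y^k)\to(\bx,\by)$, the block computation
\[
\mathbf{B}_k-\nabla g(x^k,y^k)=\begin{pmatrix}B_k-\nabla^2(y^kc)(x^k) & 0\\ 0 & 0\end{pmatrix}
\]
together with $s^k=(x^{k+1}-x^k,\,y^{k+1}-y^k)$ reduces the Dennis--Mor\'e criterion to exactly the hypothesis of the corollary. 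No serious obstacle arises: all analytic work has already been invested in the section's main equivalence theorem and in \Cref{polyhedralgraph}, and the only care required is this bookkeeping translation that turns the abstract Dennis--Mor\'e criterion into a condition on $B_k$ that can be enforced by standard quasi-Newton update formulas.
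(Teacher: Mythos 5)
Your assembly is correct and is exactly the argument the paper intends (the corollary is stated without explicit proof, but the three cited results compose precisely as you describe). One small point worth making explicit: the main equivalence theorem of \Cref{sec:subreg} has $f$ satisfying \eqref{eq:bcq} at $\bx$ as a standing hypothesis, which is not stated directly in the corollary; this is supplied by \Cref{lem:CQimplicationchain}, since $M(\bx)=\set{\by}$ implies \eqref{eq:bcq}. With that noted, your identification of $0\in g(\bx,\by)+G(\bx,\by)$ via \eqref{eq:GEQstationarity}, your passage from strong metric subregularity of $g+G$ to that of the linearization $\cG$ via \Cref{polyhedralgraph}, your use of \Cref{lem:qnoptimalityiffgeqqn} to translate the subproblem iteration into the generalized-equation iteration \eqref{eq:geqqn}, and your block computation reducing $\mathbf{E}_ks^k=o(\|s^k\|)$ (via the remark allowing $\nabla g(x^k,y^k)$ in place of $\nabla g(\bx,\by)$) to the primal-only condition $(B_k-\nabla^2(y^kc)(x^k))[x^{k+1}-x^k]=o(\|s^k\|)$ are all correct.
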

\begin{remark}
	Consequently, the sufficient conditions for superlinear convergence of quasi-Newton methods require us to choose $B_k$ as an approximation to the Hessian of the Lagrangian $\nabla^2_{xx}L(x^k,y^k)=\nabla^2(y^kc)(x^k)$ in the update direction $x^{k+1}-x^k$ at every iteration.
\end{remark}
\section{Partial Smoothness} \label{sec:partialsmoothness}
The notion of partial smoothness, introduced by Lewis \cite{lewis2002active}, generalizes classical notions of nondegeneracy, strict complementarity, and active constraint identification by illuminating the appropriate underlying manifold geometry of optimization problems. This allows for a more thorough understanding of the convergence behavior of algorithms applied to nonsmooth optimization problems, where solutions lie on well-defined submanifolds of the parameter space on which the function behaves smoothly and off of which it behaves nonsmoothly. Partial smoothness in the context of \ref{theprogram} allows us in \Cref{sec:metricreg} to establish metric regularity properties of the solution mapping.
\begin{definition}
	Define a set $\cM\subset \R^m$ to be a \emph{manifold} of codimension $\ell$ around $\bc\in \R^m$ if $\bc\in\cM$, and there exists an open set $V\subset \R^m$ containing $\bc$ and a $\cC^2$-smooth function $F:V\to\R^\ell$ with surjective derivative throughout $V$ such that
	\(
	\cM\cap V = \{c \in V : F(c) = 0\}.
	\)
	In which case (see \cite{lewis2002active}), the \emph{tangent space} to $\cM$ at $\bc$ is $\tcone{\bar c}{\cM} = \Null{\nabla F(\bar c)}$, the \emph{normal space} to $\cM$ at $\bc$ is $\ncone{\bc}{\cM} = \ran{\nabla F(\bar c)^\top}$, both independent of the choice of $F$. Moreover, the set $\cM$ is Clarke regular at $\bc$, and $\ncone{\bc}{\cM}$ equals the normal cone defined in \eqref{eq:limitingnormals}.
\end{definition}
\begin{definition}[Partial smoothness for closed, convex functions]
	Suppose $h:\R^m\to\eR$ is a closed, proper, convex function and that $\bc\in\cM\subset \R^m$. The function $h$ is \emph{partly smooth} at $\bc$ relative to $\cM$ if $\cM$ is a manifold around $\bar c$ and the following four properties hold:
	\begin{enumerate}[label=(\alph*)]
		\item (restricted smoothness) the restriction $h|_\cM$ is smooth around $\bc$, in that there exists a neighborhood $V$ of $\bc$ and a $\cC^2$-smooth function $g$ defined on $V$ such that $h=g$ on $V\cap\cM$;
		\item (existence of subgradients) at every point $c\in\cM$ close to $\bc,\ \sd h(c)\neq\emptyset$;
		\item (normals and subgradients parallel) $\parr{\sd h(\bc)} = \ncone{\bar c}{\cM};$
		\item (subgradient inner semicontinuity) the subdifferential map $\sd h$ is inner semicontinuous at $\bc$ relative to $\cM$.
	\end{enumerate}
	We say that $h$ is \emph{partly smooth relative to $\cM$} if $\cM$ is a manifold and $h$ is partly smooth at each point in $\cM$ relative to $\cM$.
\end{definition}
\begin{remark}
	By \cite[Proposition 2.4]{lewis2002active}, requiring (a) - (d) in the definition is equivalent to requiring (a), (b), (d), and \emph{normal sharpness}:
	\begin{equation}
	h'(\bc;-w) > -h'(\bc;w),\quad \forall\,w\in\ncone{\bc}{\cM}\setminus\set{0},
	\end{equation}
	and is also equivalent to requiring (a), (b), (d), and \emph{lineality and tangent equality}:
	\begin{equation}
	\label{eq:lineality=tangent}
	\bset{w\in\R^m}{-h'(\bc;w) = h'(\bc;-w)} =: \mathrm{lin}\ h'(\bc;\cdot) = \tcone{\bc}{\cM}.
	\end{equation}
\end{remark}
In the context of the PLQ functions given in 
\Cref{def:plq}, 
a natural choice for the active manifold at a point $\bc\in\dom{h}$ for \ref{theprogram} is the set given by
\begin{equation}\label{eq:manifold}
\cM_{\bc} := \ri{\bigcap_{k\in \cK(\bc)}C_k},
\end{equation}
where $\cK(\bc)$ are the active indices at $\bc$ (see \Cref{def:activeidx}).
The analysis of the manifold $\cM_{\bc}$ requires a more thorough understanding of the structure of $\dom{h}$, which we obtain from the following key result due to Rockafellar and Wets.
\begin{lemma} \cite[Lemma 2.50]{rockafellar_wets_1998}
	\label{plqstructthm}
	Suppose $C$ is a convex set which is the union of a finite collection of polyhedral sets $C_k$. If the polyhedral sets $\set{C_k}_{k=1}^{\cK}$ are represented in terms of a single family of non-constant affine functions $l_i(x) = \ip{a_i}{x}-\alpha_i$ indexed by $i=1,\dotsc,s$, then for each $k$ there is a subset $I_k$ of $\set{1,\dotsc,s}$ such that $C_k = \bset{x}{l_i(x)\le0\text{ for all }i\in I_k}$. Let $I$ denote the set of indices $i\in\set{1,\dotsc,s}$ such that $l_i\le0$ for all $x\in C$. Then,
	\(
	C = \bset{x}{l_i(x)\le0\text{ for all }i\in I}.
	\)
	If $\intr{C}\neq\emptyset$, then $C$ can be written as the union of a finite collection of polyhedral sets $\set{D_j}_{j\in J}$ such that
	\begin{enumerate}[label=(\alph*)]
		\item each set $D_j$ is included in one of the sets $C_k$,
		\item $\intr{D_j}\neq\emptyset$, so $D_j=\cl{\intr{D_j}}$,
		\item $\intr{D_{j_1}}\cap \intr{D_{j_2}}=\emptyset$ when $j_1\neq j_2$.
	\end{enumerate}
\end{lemma}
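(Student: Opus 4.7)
The statement decomposes into two claims: (i) the tight representation $C = \{x : l_i(x) \le 0,\, i \in I\}$; and (ii) the decomposition $\{D_j\}_{j \in J}$ under $\intr{C} \ne \emptyset$. My plan is to prove (i) first, then use it as the input for an arrangement-of-hyperplanes argument for (ii).

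For (i), the inclusion $C \subseteq \{x : l_i(x) \le 0,\, i \in I\}$ is by definition of $I$. For the reverse, I would argue by contradiction. Suppose $x^{*}$ satisfies $l_i(x^{*}) \le 0$ for all $i \in I$ but $x^{*} \notin C$. Pick $y_{0} \in C$ and set $t^{*} := \sup\{t \in [0,1] : (1-t)y_{0} + t x^{*} \in C\}$. Since $C$ is a finite union of closed polyhedra, it is closed, so the supremum is attained at $z^{*} := (1-t^{*}) y_{0} + t^{*} x^{*} \in C$ with $t^{*} < 1$. Because the segment leaves $C$ immediately past $z^{*}$, a pigeonhole on the finitely many defining inequalities yields $k^{*}$ with $z^{*} \in C_{k^{*}}$ and $i^{*} \in I_{k^{*}}$ such that $l_{i^{*}}(z^{*}) = 0$ and the slope of $t \mapsto l_{i^{*}}((1-t)y_{0} + t x^{*})$ at $t^{*}$ is strictly positive; by affineness of $l_{i^{*}}$ and the fact that $x^{*} - z^{*}$ is a positive multiple of $x^{*} - y_{0}$, this forces $l_{i^{*}}(x^{*}) > l_{i^{*}}(z^{*}) = 0$. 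If $i^{*} \in I$, this contradicts $l_{i^{*}}(x^{*}) \le 0$, so $i^{*} \notin I$, meaning there exists $y^{*} \in C$ with $l_{i^{*}}(y^{*}) > 0$. I would then iterate: form a convex combination $y^{(1)} \in C$ with $l_{i^{*}}(y^{(1)}) > 0$ and rerun the exit-point argument along the segment from $y^{(1)}$ to $x^{*}$, producing a fresh exit index $i^{**} \in \{1, \ldots, s\} \setminus I$ with $l_{i^{**}}(x^{*}) > 0$. Because $l_{i^{*}}$ is affine and strictly positive at both endpoints of this new segment, it stays strictly positive throughout, forcing $i^{**} \ne i^{*}$. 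Since $\{1,\ldots,s\} \setminus I$ is finite, the iteration cannot continue indefinitely, yielding the contradiction.

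For (ii), assume $\intr{C} \ne \emptyset$. I would exploit the affine arrangement induced by $\{l_i = 0\}_{i=1}^{s}$: for each sign vector $\sigma \in \{-1, +1\}^{s}$, define the closed chamber $R_{\sigma} := \{x : \sigma_i l_i(x) \ge 0,\ i = 1, \ldots, s\}$ and set $D_{\sigma} := C \cap R_{\sigma}$. Let $J := \{\sigma : \intr{D_{\sigma}} \ne \emptyset\}$. Property (c) holds because the open chambers $\intr{R_{\sigma}}$ are pairwise disjoint across distinct $\sigma$. Property (b) is the standard fact that a polyhedron with nonempty interior equals the closure of its interior, applied to each $D_{\sigma}$. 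For (a), given $\sigma \in J$ pick $x_{0} \in \intr{D_{\sigma}} \subseteq \intr{R_{\sigma}}$, so $\sigma_i l_i(x_{0}) > 0$ for every $i$. Since $x_{0} \in C$, some $k$ has $x_{0} \in C_{k}$, forcing $l_i(x_{0}) \le 0$ for $i \in I_{k}$, which combined with $\sigma_i l_i(x_{0}) > 0$ gives $\sigma_i = -1$ for every $i \in I_{k}$; the defining inequalities of $C_{k}$ therefore hold on all of $R_{\sigma}$, so $D_{\sigma} \subseteq C_{k}$. The covering $C = \bigcup_{\sigma \in J} D_{\sigma}$ follows because generic points of $\intr{C}$ avoid every hyperplane $\{l_i = 0\}$, hence lie in $\intr{R_{\sigma}}$ for their sign vector $\sigma$, giving $\intr{C} \subseteq \bigcup_{\sigma \in J} D_{\sigma}$; taking closures on both sides, and using that the right-hand side is a finite union of closed sets, yields $C = \cl(\intr{C}) \subseteq \bigcup_{\sigma \in J} D_{\sigma}$, while the reverse inclusion is built into the construction.

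I expect (i) to be the main obstacle: part (ii) is essentially bookkeeping on a hyperplane arrangement once the representation from (i) is in hand, whereas (i) requires the genuinely structural fact that a convex union of polyhedra drawn from a common affine family admits a representation in terms of that family. The most delicate step is producing each iterate $y^{(m)} \in C$ that simultaneously preserves strict positivity of all previously identified exit indices; I would handle this by convex combinations with the mixing parameter chosen small relative to the finitely many strict-positivity margins already accumulated, which is possible precisely because each such margin is strictly positive at the preceding iterate.
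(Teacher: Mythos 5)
The paper does not actually prove this lemma; it is quoted from Rockafellar--Wets (Lemma 2.50), so there is no in-paper argument to compare yours against. Judged on its own terms, your part (ii) is correct: the sign-chamber construction $D_\sigma = C\cap R_\sigma$ delivers (a)--(c) and the covering exactly as you argue (note $\intr{R_\sigma}=\{x:\sigma_i l_i(x)>0\ \forall i\}$ because the $l_i$ are non-constant, which is what makes (c) and your density/closure step work), and it also yields the constant-sign property of each $l_i$ on each $D_j$ that the paper later extracts from this lemma in the proof of \Cref{cor:plqstructthm}. Part (i) is the right strategy and the exit-point/pigeonhole step is sound (using convexity of $C$ so that the segment meets $C$ in an interval), but the iteration contains one step that fails as literally described.

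The gap is in producing $y^{(m+1)}$. To force the next exit index to be new, you need a point of $C$ at which \emph{all} indices found so far, including the newest one $i_{m+1}$, are strictly positive. Your recipe --- a convex combination anchored at the preceding iterate with the mixing parameter ``chosen small relative to the margins already accumulated'' --- preserves positivity of $l_{i_1},\dots,l_{i_m}$ but gives no control on $l_{i_{m+1}}$, which may be negative at the preceding iterate; and no single mixing parameter between the preceding iterate $y^{(m)}$ and the witness $w\in C$ with $l_{i_{m+1}}(w)>0$ need exist (if, say, $l_{i_m}$ takes the values $1$ and $-10$ at $y^{(m)}$ and $w$ respectively while $l_{i_{m+1}}$ takes the values $-10$ and $1$, then the old index wants $\lambda<1/11$ and the new one wants $\lambda>10/11$). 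The repair is to anchor the combination at the exit point $z^{(m+1)}$ of the segment from $y^{(m)}$ to $x^*$ instead: there $l_{i_j}(z^{(m+1)})>0$ for $j\le m$ (each is affine and positive at both endpoints of that segment), while $l_{i_{m+1}}(z^{(m+1)})=0$ exactly, so $y^{(m+1)}:=(1-\lambda)z^{(m+1)}+\lambda w\in C$ satisfies $l_{i_{m+1}}(y^{(m+1)})=\lambda\, l_{i_{m+1}}(w)>0$ for every $\lambda>0$ and keeps the old indices positive for $\lambda$ small. With that one change the set of exit indices strictly grows inside the finite set $\{1,\dots,s\}\setminus I$, and the contradiction terminating part (i) is reached.
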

This result implies that the domain of $h$ has a  finite stratification \cite[Definition 3.1]{drusvyatskiy2014clarke} for which $h$ is a stratifiable function \cite[Definition 3.2]{drusvyatskiy2014clarke}. This stratification is central to our discussion of partial smoothness and is referred to as the Rockafellar-Wets PLQ Representation.
\begin{theorem}[Rockafellar-Wets PLQ Representation]
	\label{cor:plqstructthm}
	Suppose $h$ is piecewise linear-quadratic convex and $\intr{\dom{h}}\neq\emptyset$. Then, 
	without loss of generality, we may assume the polyhedral sets $\set{C_k}_{k=1}^\cK$ defining $h$ are given in terms of a common set of $s>0$ hyperplanes $\cH:=\set{(a_j,\alpha_j)}_{j=1}^s\subset (\R^{m}\setminus\set{0})\times\R$, so that for all $k\in\set{1,\dotsc,\cK}$,
	\[
	C_k = \bset{c}{\ip{\omega_{kj}a_j}{c}\le \omega_{kj}\alpha_{j},\text{ for all } j\in\set{1,\dotsc, s}},
	\]
	with $\omega_{kj}\in\set{\pm1}$,
	\begin{equation}
	\label{eq:RW-Ikc}	
	I_k(c) = \bset{j}{\ip{\omega_{kj}a_j}{c}= \omega_{kj}\alpha_{j}} = \bset{j}{\ip{a_j}{c}= \alpha_{j}}\subset \set{1,\dotsc,s},
	\end{equation}
	and
	\begin{enumerate}[label=(\alph*)]
		\item $\emptyset\neq\intr{C_k}=\bset{c}{
			\begin{aligned}
			& \ip{\omega_{kj}a_j}{c} < \omega_{kj}\alpha_{j}, \text{ for all }j\in\set{1,\dotsc,s_k}\\
			\end{aligned}
		},\text{ for all }k\in\set{1,\dotsc,\cK}$,
		\item $\intr{C_{k_1}}\cap\intr{C_{k_2}}=\emptyset$ when $k_1\neq k_2$. \label{int2}
	\end{enumerate}
	Condition (b) implies that if $c\in C_{k_1}\cap C_{k_2}$, then $c\in \bdry{C_{k_1}}\cap\bdry{C_{k_2}}$ when $k_1\neq k_2$.
\end{theorem}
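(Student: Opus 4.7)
The plan is to build the representation by refining the original $\{C_k\}$ into sign-labeled polyhedra over a single common hyperplane system, then verify the required properties directly. First, I collect every affine function $\ip{a_{kj}}{\cdot}-\alpha_{kj}$ appearing across the original defining systems of the $C_k$'s, discard duplicates (up to positive scaling) and any with zero normal, and list the remainder as $\{(a_j,\alpha_j)\}_{j=1}^s$. Each original $C_k$ then takes the form $\{c:\ip{a_j}{c}\le\alpha_j,\ j\in I_k\}$ for some $I_k\subset\{1,\dotsc,s\}$. Applying \Cref{plqstructthm} to this common system and using $\intr\dom h\ne\emptyset$ guarantees that $\dom h$ admits a refinement into full-dimensional polyhedra, which is what the remainder of the argument makes precise.

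Next, for each original $k$ and each sign assignment $\omega\in\{-1,+1\}^s$ with $\omega_j=+1$ for all $j\in I_k$, define
\[
C_{k,\omega}:=\{c\in\R^m:\omega_j\ip{a_j}{c}\le\omega_j\alpha_j,\ j=1,\dotsc,s\}\subset C_k.
\]
Since each point of $C_k$ satisfies one of the two opposite signed inequalities for every $j\notin I_k$, we have $C_k=\bigcup_\omega C_{k,\omega}$, and therefore $\dom h=\bigcup_{k,\omega}C_{k,\omega}$. Discard those $C_{k,\omega}$ with empty interior, and relabel the remaining pieces as the new $\{C_k\}_{k=1}^\cK$, each equipped with the quadratic data $(Q_k,b_k,\beta_k)$ inherited from its original containing piece. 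By construction, each new $C_k$ has the sign-labeled form required, with $\omega_{kj}\in\{\pm1\}$.

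To verify the remaining claims: property (a) holds because $\intr C_{k,\omega}$ is precisely the set defined by the strict versions of its signed inequalities, which is nonempty by the discard step. For (b), any two distinct new pieces have sign vectors disagreeing in some coordinate $j_0$, so their interiors lie in opposite open half-spaces of $\{\ip{a_{j_0}}{\cdot}=\alpha_{j_0}\}$ and are therefore disjoint. Equation \eqref{eq:RW-Ikc} follows because the factor $\omega_{kj}$ cancels on both sides of the equality $\ip{\omega_{kj}a_j}{c}=\omega_{kj}\alpha_j$, and the closing remark that $C_{k_1}\cap C_{k_2}\subset\bdry C_{k_1}\cap\bdry C_{k_2}$ for $k_1\ne k_2$ is immediate from (b), since any interior point of $C_{k_1}$ also lying in $C_{k_2}$ would violate the disjointness of interiors. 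The main obstacle is ensuring that the inherited quadratic is well-defined when a refined $C_{k,\omega}$ is contained in several original pieces with a priori distinct quadratic data; this is resolved by the single-valuedness of $h$ on $\dom h$ and its continuity (\Cref{plqformulae}), which together force any two original quadratic expressions to coincide on the overlap of their defining pieces, and hence on any refined subset of that overlap.
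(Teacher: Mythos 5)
Your construction is essentially the same in spirit as the paper's: both arguments refine the original pieces into sign-labeled, full-dimensional polyhedra over a common family of affine functions and inherit the quadratic data from a containing original piece. The paper's proof is a two-sentence invocation of the \emph{proof} of \Cref{plqstructthm} (Rockafellar--Wets Lemma 2.50), which already produces full-dimensional refined pieces $D_j$ each lying on one side of every $l_i$; you instead rebuild that refinement by hand via the sign vectors $\omega$. This is a legitimate, more self-contained route, but two steps are not actually justified as written.

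First, after discarding the $C_{k,\omega}$ with empty interior you never verify that the surviving pieces still cover $\dom h$; this is precisely what $\intr\dom h\neq\emptyset$ buys you and it needs an argument. Every discarded piece $C_{k,\omega}$ lies inside the hyperplane arrangement $\bigcup_j\set{\ip{a_j}{\cdot}=\alpha_j}$, which is closed and nowhere dense, while the union $S$ of the surviving pieces is closed (finite union of polyhedra) and contains $\intr\dom h\setminus\bigcup_j\set{\ip{a_j}{\cdot}=\alpha_j}$, a dense subset of $\intr\dom h$. Since $\dom h$ is closed and convex with nonempty interior, $\dom h=\cl{\intr\dom h}\subset\cl S = S$. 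Without something like this, the discard step could in principle break the covering.

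Second, the justification for the closing remark is incorrect as stated. You write that ``any interior point of $C_{k_1}$ also lying in $C_{k_2}$ would violate the disjointness of interiors,'' but membership in $C_{k_2}$ does not give membership in $\intr C_{k_2}$, so there is no immediate contradiction with (b). The correct argument uses (a) as well: if $c\in\intr C_{k_1}\cap C_{k_2}$, pick $c'\in\intr C_{k_2}\neq\emptyset$; by the line-segment principle $(1-t)c+tc'\in\intr C_{k_2}$ for all $t\in(0,1]$, and for $t$ small enough this point also lies in the open set $\intr C_{k_1}$, contradicting (b). One also needs, implicitly, that pieces sharing the same sign vector (which are the same set) are merged before this argument is applied; your remark on well-definedness of the inherited quadratic handles the resulting consistency question, but the merging itself should be stated, since otherwise ``two distinct new pieces have sign vectors disagreeing in some coordinate'' is not automatic.

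These are fillable gaps rather than wrong turns; with the two arguments above supplied, the proof is correct and parallels the paper's.
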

\begin{proof}
	The proof of the previous lemma shows that for every polyhedron $D_j$ and every $i\in\set{1,\dotsc,s}$, either $l_i(x)\le0$ for all $x\in D_j$ or $l_i(x)\ge0$ for all $x\in D_j$. Therefore each affine function is used in the definition of $D_j$, and $D_j$ is contained entirely within one of the sets $C_k$, relative to which $h$ takes the form $\frac{1}{2}\ip{c}{Q_kc} + \ip{b_k}{c} + \beta_k$.
\end{proof}
The basic assumptions employed for the remainder of this section are listed below.
\begin{assumptions}\hfill
	\label{assum:ps}
	\begin{enumerate}[label=(\alph*)]
		\item The function $h$ is PLQ convex with $\dom{h}$ given by the Rockafellar-Wets PLQ representation described in \Cref{cor:plqstructthm},
		\item $\bc\in\dom{h}$ satisfies $\bk:=|\cK(\bc)|\ge2$,
	\end{enumerate}
\end{assumptions} 
\begin{remark}
	Whenever $\cK(\bc)=\set{k_0},\ h$ is continuously differentiable on $\intr{C_{k_0}}$. Therefore, we assume that $\bk\ge2$ and delay the discussion of $\bk=1$ to \Cref{smoothproblems}
\end{remark}
The following lemma further supports the choice for the manifold $\cM_{\bc}$.
\begin{lemma}
	\label{activepolyhedraequal}
	Let $\cM_{\bc}$ be as in \eqref{eq:manifold} and let \Cref{assum:ps} hold. Then, for any $c\in \cM_{\bc},\ \cK(c) = \cK(\bc)$, and so $\cM_{c}=\cM_{\bc}$. Moreover, for any $k\in \cK(\bc)$, the active index sets $I_k(c)$ satisfy $I_k(c)=I_k(\bc)$
\end{lemma}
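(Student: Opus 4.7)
The strategy is to combine the Rockafellar-Wets PLQ Representation (\Cref{cor:plqstructthm}) with a basic relative-interior fact: for any convex set $D$ with $c \in \ri{D}$ and $\bc \in D$, there exists $\epsilon > 0$ so that $c' := c + \epsilon(c - \bc) \in D$. Setting $D := \bigcap_{k \in \cK(\bc)} C_k$ and $\phi_j(x) := \ip{a_j}{x} - \alpha_j$, the plan is to propagate each affine function $\phi_j$ past $c$ along the direction $c - \bc$ and exploit the sign conventions $\omega_{kj}$ to force the desired equalities.

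For $\cK(c) = \cK(\bc)$, the inclusion $\cK(\bc) \subseteq \cK(c)$ is immediate since $c \in D$. For the reverse, I would argue by contradiction: assume $k' \in \cK(c) \setminus \cK(\bc)$. Then $\bc \notin C_{k'}$ witnesses an index $j^*$ with $\omega_{k'j^*}\phi_{j^*}(\bc) > 0$. Since $\bc \in C_k$ for each $k \in \cK(\bc)$ and $\phi_{j^*}(\bc) \neq 0$, the inequality $\omega_{kj^*}\phi_{j^*}(\bc) \leq 0$ forces $\omega_{kj^*} = -\omega_{k'j^*}$ for all such $k$, so $\omega_{k'j^*}\phi_{j^*} \geq 0$ throughout $D$. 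On the other hand, $c \in C_{k'}$ gives $\omega_{k'j^*}\phi_{j^*}(c) \leq 0$, hence $\phi_{j^*}(c) = 0$. The extension $c' \in D$ then yields $\omega_{k'j^*}\phi_{j^*}(c') = -\epsilon\, \omega_{k'j^*}\phi_{j^*}(\bc) < 0$, contradicting the $\geq 0$ constraint on $D$. Consequently $\cK(c) = \cK(\bc)$, and $\cM_c = \cM_{\bc}$ is immediate from the definition of the manifold.

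For the equality $I_k(c) = I_k(\bc)$, \eqref{eq:RW-Ikc} shows that $I_k(\cdot) = \{j : \phi_j(\cdot) = 0\}$ is independent of $k$; write $I(\cdot)$ for this common set. The inclusion $I(c) \subseteq I(\bc)$ follows from an entirely analogous ri-extension argument: if $\phi_j(\bc) \neq 0$, sign-forcing on $\omega_{kj}$ for $k \in \cK(\bc)$ pins $\phi_j$ to a fixed sign on $D$, and the extension $c' \in D$ precludes $\phi_j(c) = 0$. For the reverse inclusion $I(\bc) \subseteq I(c)$, split $j \in I(\bc)$ according to whether the signs $\{\omega_{kj} : k \in \cK(\bc)\}$ are mixed or uniform: mixed signs give $\phi_j \equiv 0$ on $D$, hence $\phi_j(c) = 0$; the uniform-sign case is the main obstacle I expect, since in principle $\bc$ could lie on a proper face of $D$ not met by $c \in \ri{D}$. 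The resolution relies on the canonicality of the Rockafellar-Wets representation from \Cref{cor:plqstructthm}, which, together with $c \in \ri{D}$, rules out a hyperplane being active only at $\bc$ within the representation and so forces $\phi_j(c) = 0$ whenever $\phi_j(\bc) = 0$.
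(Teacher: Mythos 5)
Your argument for $\cK(c)=\cK(\bc)$ is correct, and in fact takes a genuinely different and cleaner route than the paper. The paper establishes $\cK(c)\subset\cK(\bc)$ by first producing an $\eps$-ball about $\bc$ disjoint from the inactive cells, then running a ray/triangle separation argument split into two geometric cases. Your sign-forcing argument replaces all of this: the RW representation pins $\omega_{kj^*}=-\omega_{k'j^*}$ for every $k\in\cK(\bc)$, so $\omega_{k'j^*}\phi_{j^*}\ge0$ on $D:=\bigcap_{k\in\cK(\bc)}C_k$, while $c\in C_{k'}$ forces $\phi_{j^*}(c)=0$; one relative-interior extension past $c$ then gives the contradiction. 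This is more elementary, algebraic rather than geometric, and easier to verify; either proof yields the same conclusion. Your argument for $I(c)\subseteq I(\bc)$ (extension past $c$) is also correct.

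The gap is in $I(\bc)\subseteq I(c)$. You correctly flag the uniform-sign case as the obstacle, but the proposed resolution --- ``the canonicality of the Rockafellar--Wets representation\dots rules out a hyperplane being active only at $\bc$'' --- is not a proof and, as stated, is not a valid claim. The RW representation alone does not prevent $\bc$ from sitting on a proper face of $D$ that $c\in\ri{D}$ misses; that is exactly the scenario your own language identifies, and nothing in \Cref{cor:plqstructthm} excludes it. What actually closes this case is the \emph{symmetric} relative-interior extension at $\bc$, which is what the paper uses: since $\bc$ itself lies in $\cM_{\bc}=\ri{D}$, there is $\eps>0$ with $\bc'':=\bc+\eps(\bc-c)\in D$. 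If $j$ has uniform sign (say $\phi_j\le0$ on $D$), $\phi_j(\bc)=0$, and $\phi_j(c)<0$, then $\phi_j(\bc'')=-\eps\,\phi_j(c)>0$, contradicting $\phi_j\le0$ on $D$. In other words, you have extended past $c$ but never past $\bc$; the missing inclusion requires exactly the extension you omitted, and it requires the hypothesis $\bc\in\cM_{\bc}$ (which the paper uses implicitly via its appeal to Rockafellar's Theorem 6.4 with $\bc$ in the role of the relative-interior point). Replace the ``canonicality'' appeal with this second extension and the proof is complete.
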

\begin{proof}
	Suppose $\cK(c) \neq \cK(\bc)$. Since the definition of $\cM_{\bc}$ implies $\cK(\bc)\subset \cK(c)$, there exists $j\in \cK(c')\setminus \cK(\bc)$. By \ref{int2} in \Cref{cor:plqstructthm}, we necessarily have $c\in\bdry{C_j}$.
	\\	
	We first argue the existence of $\epsilon>0$ such that that $(\bc + \epsilon\bB)\cap C_k=\emptyset$ for all $k\not\in \cK(\bc)$. If no such $\epsilon$ exists, since there are only finitely many $k\in K\setminus \cK(\bc)$, there would exist an index $k_0\not\in \cK(\bc)$ and an infinite sequence $c^n\to \bc$ with $\set{c^n}\subset C_{k_0}$. By closedness of the set $C_{k_0}$, $\bc\in C_{k_0}$, which is a contradiction.\\		
	Since $c, \bc \in \cM_{\bc}$, by \cite[Theorem 6.4]{rockafellar2015convex} there exists a $\mu>1$ such that $\tilde c := (1-\mu)\bc + \mu c \in \bigcap_{k\in \cK(\bc)}C_k$.
	\\
	Since $c\in\bdry{C_j}$, there exists a $z\in\intr{C_j}$ sufficiently close to $c$ so that the ray 
	$\calR := \bset{\tilde c + \lambda (z - \tilde{c})}{0\le \lambda}$ meets $\bc + \epsilon\bB$. We consider two cases. To set the stage, for any two points $x,y\in \R^m$, denote the line segment connecting them by $		[x,y]=\bset{(1-\lambda) x + \lambda y}{0\le\lambda\le1}.$
\\	
	Case 1. There is a point $x\in\calR\cap(\bc + \epsilon\bB)\cap C$. Then $z\in[\tilde{c},x]\subset C_k$ for some $k\in \cK(\bc)$. But then $z\in(\intr{C_j})\cap C_k$, a contradiction. 
\\
		Case 2. We have $\calR\cap(\bc + \epsilon\bB)\cap C=\emptyset$. Then there 
		is a point $x\in (\bc+ \eps\bB)\setminus C$ such that $z\in[\tilde c,x]$. Since $x\notin C$, there is a first point, 
		which we denote by $\hat z$, in
		$C_j$ on this line segment as one moves from $x$ to $\tilde c$.		
		Then the line segment $[\hat z, \bc]\subset C$. The point $\hat z$ is not on the line segment 
		$[\tilde c,\bc]$ since then both $c'$ and $z$ would be on the line segment $[\tilde c, \bc]$ and so
		$\intr{C_j}\cap\bdry{C_k}\ne\emptyset$ for some $k\in \cK(\bc)$, a contradiction.
		Consequently, the points $\tilde c, \bc$ and $\hat z$ are not all collinear and hence form a 
		triangle inside of $C$.
		Let $\tilde z$ be on the boundary of $\bc + \epsilon\bB$ and on the line segment $[\hat z,\bc]$. Then the line segment $[\tilde z,\tilde c]$ passes through $\intr{C_j}$. This is again a contradiction.
\\
Therefore, no such $c$ exists, and $\cK(c)=\cK(\bc)$ for all $c\in \cM_{\bc}$.
	\\
	For the second claim, suppose there exists $k\in \cK(\bc),\ c\in\cM_{\bc}$ and $j\in\set{1,\dotsc,s}$ with
	\begin{equation}
	\label{eq:activecontradiction}
	\ip{c}{\omega_{kj}a_j} < \omega_{kj}\alpha_{j}\text{ and }\ip{\bc}{\omega_{kj}a_j} = \omega_{kj}\alpha_{j}.
	\end{equation}
	Again by \cite[Theorem 6.4]{rockafellar2015convex}, we may choose $\mu>1$ so that $\mu \bc + (1-\mu)c \in \cM_{\bc}$. In particular, $\mu \bc + (1-\mu)c\in C_k$. But writing $\mu = 1 + \epsilon$ with $\epsilon> 0$ gives the contradiction
	\begin{align*}
	\omega_{kj}\alpha_{j} &\ge \ip{\mu \bc + (1-\mu)c}{\omega_{kj}a_j} \\
	&= (1 + \epsilon)\ip{\bc}{\omega_{kj}a_j} - \epsilon \ip{c}{\omega_{kj}a_j}
	>\omega_{kj}\alpha_{j} \text{ by }\eqref{eq:activecontradiction}.
	\end{align*}
	Therefore $I_k(\bc) \subset I_k(c)$. Reversing the roles of $c$ and $\bc$ in \eqref{eq:activecontradiction} gives the other inclusion.
\end{proof}
The previous lemma tells us distinct points $c,c'\in\cM_{\bc}$ have the same active indices $\cK(c)$ and $\cK(c')$. Moreover, for any active polyhedron $C_k$, the active hyperplanes for that polyhedron, $I_k(c)$ and $I_k(c')$, at $c$ and $c'$ are the same. This observation offers a global description of  $\cM_{\bc}$ in terms of the active hyperplanes at $\bc$ alone.
\begin{lemma}
	Let $\cM_{\bc}$ be as in \eqref{eq:manifold}, and let \Cref{assum:ps} hold. Then,
	\label{Mfacialrep}
	\[
	\cM_{\bc} = \bset{c}{
		\begin{aligned}
		\ip{c}{a_j} &= \alpha_{j}\text{ for all }k\in \cK(\bc), j\in I_k(\bc) \\
		\ip{c}{\omega_{kj}a_j} &< \omega_{kj}\alpha_{j}\text{ for all }k\in \cK(\bc), j\not\in I_k(\bc)
		\end{aligned}
	}.
	\]
	In particular, $I_{k_1}(c) = I_{k_2}(c)$ for all $c\in \cM_{\bc}$ and $k_1,k_2\in \cK(\bc)$. Moreover, for any $k\in \cK(\bc)$ and $c\in\cM_{\bc}$,
	\(
	\tcone{c}{\cM_{\bc}} = \Null{A_k(\bc)^\top}, \text{ and } \ncone{c}{\cM_{\bc}} = \ran{A_k(\bc)},
	\)
	where $A_{k}(\bc)$ is the matrix whose columns are the gradients of the active constraints at $\bc\in C_{\bk}$ in some ordering.
\end{lemma}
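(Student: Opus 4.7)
The plan is to proceed in three steps: first, derive the explicit description of $\cM_{\bc}$ from its definition $\cM_{\bc}=\ri{\bigcap_{k\in\cK(\bc)}C_k}$ by combining \Cref{activepolyhedraequal} with the classical characterization of the relative interior of a polyhedron; second, deduce the equality $I_{k_1}(c)=I_{k_2}(c)$ as an immediate consequence of the Rockafellar--Wets representation; and third, read off the tangent and normal cones from the resulting affine structure.

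For the first step, set $P:=\bigcap_{k\in\cK(\bc)}C_k$, a polyhedron with defining inequalities $\ip{\omega_{kj}a_j}{c}\le\omega_{kj}\alpha_j$ indexed by $k\in\cK(\bc)$ and $j\in\{1,\ldots,s\}$. The inclusion ``$\subseteq$'' is immediate from \Cref{activepolyhedraequal}: for $c\in\cM_{\bc}$ one has $\cK(c)=\cK(\bc)$ and $I_k(c)=I_k(\bc)$, so each constraint with $j\in I_k(\bc)$ is satisfied with equality and each with $j\notin I_k(\bc)$ with strict inequality. For the inclusion ``$\supseteq$'', I will invoke the standard fact that a point of $P$ lies in $\ri P$ iff every non-implicit inequality is strict at that point. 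Since $\bc\in\ri P$, the implicit equalities of $P$ are precisely those constraints active at $\bc$, namely the pairs $(k,j)$ with $j\in I_k(\bc)$. Thus any $c$ satisfying the claimed right-hand side lies in $P$ and has every non-implicit inequality strict, so $c\in\ri P=\cM_{\bc}$.

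The ``in particular'' claim follows from \eqref{eq:RW-Ikc}, which exhibits $I_k(c)=\{j:\ip{a_j}{c}=\alpha_j\}$, a set with no dependence on $k$; together with \Cref{activepolyhedraequal} this immediately yields $I_{k_1}(c)=I_{k_2}(c)$ for all $c\in\cM_{\bc}$ and $k_1,k_2\in\cK(\bc)$. For the tangent and normal cones, the description from the first step shows $\cM_{\bc}$ is relatively open in the affine subset $\aff \cM_{\bc}=\{c:\ip{a_j}{c}=\alpha_j,\ j\in I_k(\bc)\}$, where $k\in\cK(\bc)$ is arbitrary by the preceding sentence. Hence $\parr{\cM_{\bc}}=\Null{A_k(\bc)^\top}$; note that the sign factors $\omega_{kj}\in\{\pm1\}$ in the columns of $A_k(\bc)$ affect neither $\Null{A_k(\bc)^\top}$ nor $\ran{A_k(\bc)}$, so both objects are genuinely independent of the choice of $k$. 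A direct computation using the limit definitions $\tcone{c}{\cM_{\bc}}=\Limsup_{t\searrow0}t^{-1}(\cM_{\bc}-c)$ and \eqref{eq:limitingnormals} then shows $\tcone{c}{\cM_{\bc}}$ equals the parallel subspace $\Null{A_k(\bc)^\top}$, with normal cone equal to its orthogonal complement $\ran{A_k(\bc)}$.

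The main obstacle I anticipate is the inclusion ``$\supseteq$'' in the first step: correctly identifying the implicit equalities of $P$. The crucial input is \Cref{activepolyhedraequal}, which forbids any new equality from being forced on $\cM_{\bc}$ beyond those already active at $\bc$. Once the explicit description of $\cM_{\bc}$ is in hand, the remaining assertions reduce to straightforward linear algebra and the basic geometry of relatively open affine subsets.
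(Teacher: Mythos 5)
Your proof is correct, and its overall decomposition matches the paper's: one inclusion from \Cref{activepolyhedraequal}, the other from a relative-interior fact about polyhedra, then the cone formulas read off from the affine structure. The difference lies in which polyhedral fact carries the ``$\supseteq$'' direction. The paper introduces the auxiliary set $\cC_2$ cut out by the active equalities and the remaining weak inequalities, computes $\ri{\cC_2}$ via Rockafellar's rule for relative interiors of intersections, and then transfers the inclusion to $\cM_{\bc}=\ri{\cC_1}$ using the fact that a convex subset of $\cC_1$ not contained in its relative boundary has its relative interior inside $\ri{\cC_1}$. You instead invoke the implicit-equality characterization of $\ri{P}$ for the polyhedron $P=\bigcap_{k\in\cK(\bc)}C_k$, identifying the implicit equalities as exactly the constraints active at $\bc$. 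That identification is legitimate but note it leans on $\bc\in\ri{P}$, which is not literally stated in \Cref{assum:ps}; the paper makes the same standing assumption (its proof of \Cref{activepolyhedraequal} begins ``since $c,\bc\in\cM_{\bc}$''), so you are on equal footing, and in fact you could avoid it entirely by deducing the implicit equalities from the already-established ``$\subseteq$'' inclusion together with $P=\cl{\ri{P}}$. For the tangent and normal cones the paper simply cites \Cref{thm:polyhedraltconencone}, whereas you compute them directly from the parallel subspace of the affine hull; your version is if anything the more careful one, since $\cM_{\bc}$ is relatively open rather than polyhedral, and your remark that the signs $\omega_{kj}$ do not affect $\Null{A_k(\bc)^\top}$ or $\ran{A_k(\bc)}$ is the same observation the paper uses for the ``in particular'' claim.
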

\begin{remark}\label{rem:ccbarkj}
	By \Cref{activepolyhedraequal} and \Cref{Mfacialrep}, for all $c\in\cM_{\bc},\ k\in \cK(\bc)$, and $j\in \cK(c)$,
	\(
	\Ran{A_k(\bc)} = \Ran{A_j(c)}.
	\)
	This observation becomes important in a structural definition to follow.
\end{remark}
\begin{proof}
	Define
	\[
	\cC_1:= \bigcap_{k\in \cK(\bc)} C_k, \quad \cC_2:=\bset{c}{
		\begin{aligned}
		\ip{c}{\omega_{kj}a_j} &= \omega_{kj}\alpha_{j}\text{ for all }k\in \cK(\bc), j\in I_k(\bc) \\
		\ip{c}{\omega_{kj}a_j} &\le \omega_{kj}\alpha_{j}\text{ for all }k\in \cK(\bc), j\not\in I_k(\bc)
		\end{aligned}
	}.
	\] 
	We aim to show $\ri{\cC_1}\supset\ri{\cC_2}$. For $k\in \cK(\bc)$ and $j\in I_k(\bc)$ define $\cC_{k,j}:= \bset{c}{\ip{c}{\omega_{kj}a_j}=\omega_{kj}\alpha_{j}}$, and for $k\in \cK(\bc)$ and $j\not\in I_k(\bc)$, let $\cD_{k,j}:=\bset{c}{\ip{c}{\omega_{kj}a_j}\le\omega_{kj}\alpha_{j}}$. Then by defintion of $I_k(\bc)$,
	\[
	\bc \in \bigcap_{\substack{k\in \cK(\bc)\\ j\in I_k(\bc)}}\ri{\cC_{kj}}\cap \bigcap_{\substack{k\in \cK(\bc)\\ j\not\in I_k(\bc)}}\ri{\cD_{kj}},
	\]
	so \cite[Theorem 6.5]{rockafellar2015convex} gives
	\[
	\ri{\cC_2
	} = \bset{c}{
		\begin{aligned}
		\ip{c}{\omega_{kj}a_j} &= \omega_{kj}\alpha_{j}\text{ for all }k\in \cK(\bc), j\in I_k(\bc) \\
		\ip{c}{\omega_{kj}a_j} &< \omega_{kj}\alpha_{j}\text{ for all }k\in \cK(\bc), j\not\in I_k(\bc)
		\end{aligned}
	}.
	\]
	Moreover, $\cC_1\supset \cC_2$ with $\cC_2$ not entirely contained within the relative boundary of $\cC_1$ because $\bc\in \cC_2\cap\cM_{\bc}$. By \cite[Corollary 6.5.2]{rockafellar2015convex}, $\cM_{\bc}:=\ri{\cC_1} \supset \ri{\cC_2}$. \Cref{activepolyhedraequal} shows $\cM_{\bc}:=\ri{\cC_1}\subset \ri{\cC_2}$ because $I_k(c)=I_k(\bc)$ throughout $\cM_{\bc}$. 
	\\	
	For the second claim, the structure of $\cM_{\bc}$ implies that if $\ip{c}{\omega_{k_1j}a_j} = \omega_{k_1j}\alpha_{j}$ for some $k_1\in \cK(\bc)$, then $\ip{c}{\omega_{k_2j}a_j} = \omega_{k_2j}\alpha_{j}$ for any other $k_2\in \cK(\bc)$ as $\omega_{kj}\in \set{\pm1}$. Hence $I_{k_2}(c)\supset I_{k_1}(c)$, and this argument is symmetric in $k_1$ and $k_2$. 
	
	The tangent and normal cone formulas hold throughout $\cM_{\bc}$ by \Cref{thm:polyhedraltconencone}.
\end{proof}
Based on \Cref{Mfacialrep} and \Cref{rem:ccbarkj}, we now establish the notational tools required for our analysis.
\begin{definition}
	\label{def:AmatrixPs}
	Let $\cM_{\bc}$ be as in \eqref{eq:manifold}, and let \Cref{assum:ps} hold. Define $A_{\bk}(c)$ to be the matrix whose columns are the gradients of the active constraints at $c\in C_{\bk}$ in some ordering.	By \Cref{cor:plqstructthm} and \Cref{Mfacialrep}, without loss of generality, we can define
	\(
	A := A_{\bk}(c) \text{ independent of the choice of } c\in \cM_{\bc},
	\)
	and for any $j\in \set{1,\dotsc,\bk}$, there exists a diagonal matrix $P_j$ with entries $\pm1$ on the diagonal such that
	\begin{equation}
	\label{eq:APj=A_k(c)}
	AP_j = A_{k_j}(c) \text{ independent of }c\in\cM_{\bc}.
	\end{equation}
	We let $\ell$ be the common number of columns $\ell:=|I_k(\bc)|=|I_{k'}(\bc)|$ for all $k,k'\in \cK(\bc)$, so that
	\(
	A \in \R^{m\times \ell}, P_j\in \R^{\ell \times \ell}, P_{\bk} = I_\ell,
	\)
	and define the following block matrices $\hat{\cQ} := \diag(Q_k), \hat\cA := \diag{AP_j}$
	\begin{align}\small
	\cA := \begin{pmatrix}\label{eq:sdcoeffeq}
	(1-\bar k)AP_1 & AP_2 & \cdots & A\\
	AP_1 & (1-\bar k)AP_2 & \cdots & A \\
	\vdots & \ddots & \ddots& \vdots \\
	AP_1 & AP_2 & \cdots & (1-\bar k)A
	\end{pmatrix},\
	\cQ := \begin{bmatrix}Q_{k_1} \\ Q_{k_2}\\\vdots \\Q_{k_{\bar k}}\end{bmatrix},\ \mathcal{B} := \begin{bmatrix} b_{k_1}\\ b_{k_2}\\ \vdots \\ b_{k_{\bar k}}\end{bmatrix},\ J:=\begin{bmatrix}I_m \\ I_m \\ \vdots \\ I_m\end{bmatrix}
	\end{align}
	and averaged quantities
	\begin{align*}
	\begin{aligned}
	\bar Q &= (1/\bk)J^\top\hat\cQ J, &\bar A &= (1/\bk) J^\top\hat\cA, &\bar b &= (1/\bk)J^\top\cB, &\lambda_0(\bc) &= \bar Q\bc + \bar b. 
	\end{aligned}
	\end{align*}
\end{definition}
In a fashion similar to the \emph{structure functional} approach of
\cite{womersley1985local, osborne2001simplicial, overton1994towards}, we give a formula for the subdifferential in terms of the active manifold structure previously laid out.
\begin{lemma}
	\label{sdaveraging}
	Let $\cM_{\bc}$ be as in \eqref{eq:manifold}, let \Cref{assum:ps} hold, and recall the notation of \Cref{def:AmatrixPs}. For any $c\in \cM_{\bc},\ \sd h(c)$ can be given by two equivalent formulations:
	\begin{align}
	\label{eq:plqsdsystem}
	\sd h(c) &= \bset{y}{
		\begin{aligned}
		&\exists\,\mu=(\mu_1^\top,\dotsc,\mu_{\bk}^\top)^\top\ge0\\
		&\text{such that }
		Jy = \cQ c + \cB + \hat\cA\mu\\
		\end{aligned}}
	= \lambda_0(c) + \bar A\cU(c)	
	, 
	\end{align} 
	where
	\begin{equation}
	\label{eq:U(c)}
	\cU(c):=\bset{\mu\geq0}{
		\cA \mu = \bar k\left[\cQ c + \cB - J(\bar Q c + \bar b)\right]
	}.
	\end{equation}
\end{lemma}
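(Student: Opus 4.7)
The plan is to identify both expressions with the same inner description of $\sd h(c)$ coming from \eqref{eq:plqsubdiff}, namely the requirement that $y - Q_k c - b_k$ be a nonnegative combination of the active constraint gradients for each $k\in\cK(c)$.

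\textbf{Step 1: Set up the first expression.} For $c\in\cM_{\bc}$, Lemma \ref{activepolyhedraequal} gives $\cK(c)=\cK(\bc)=\{k_1,\ldots,k_{\bk}\}$, and the active-constraint sets $I_{k_j}(c)=I_{k_j}(\bc)$. By Theorem \ref{thm:polyhedraltconencone}, $\ncone{c}{C_{k_j}} = \{A_{k_j}(c)\mu_j : \mu_j\ge0\}$, and by \eqref{eq:APj=A_k(c)}, $A_{k_j}(c)=AP_j$ independent of $c\in\cM_{\bc}$. Substituting into \eqref{eq:plqsubdiff}, $y\in\sd h(c)$ iff there exist $\mu_1,\ldots,\mu_{\bk}\ge0$ with $y=Q_{k_j}c+b_{k_j}+AP_j\mu_j$ for each $j=1,\ldots,\bk$. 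Stacking these $\bk$ vector equations one atop the other using $J$, $\cQ$, $\cB$, and $\hat\cA=\diag(AP_j)$ yields exactly the first formulation in \eqref{eq:plqsdsystem}.

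\textbf{Step 2: Reduce to averaged variables.} Given the stacked equation $Jy=\cQ c+\cB+\hat\cA\mu$ with $\mu\ge0$, apply $(1/\bk)J^\top$ to both sides. Since $J^\top J = \bk I_m$, the left side becomes $y$; by definition of $\bar Q$, $\bar b$, and $\bar A$, the right side becomes $\bar Qc+\bar b+\bar A\mu=\lambda_0(c)+\bar A\mu$. Thus any $y\in\sd h(c)$ satisfies $y=\lambda_0(c)+\bar A\mu$ for some $\mu\ge0$.

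\textbf{Step 3: Characterize the admissible $\mu$'s as $\cU(c)$.} This is the main algebraic step. Substituting $y=\lambda_0(c)+\bar A\mu$ back into the original relation $y=Q_{k_j}c+b_{k_j}+AP_j\mu_j$ for each $j$, one obtains
\begin{equation*}
\bar A\mu - AP_j\mu_j = Q_{k_j}c+b_{k_j} - \lambda_0(c),\qquad j=1,\ldots,\bk.
\end{equation*}
I would then verify directly from the block structure \eqref{eq:sdcoeffeq} that row-block $j$ of $\cA\mu$ equals $\sum_i AP_i\mu_i - \bk AP_j\mu_j = \bk(\bar A\mu - AP_j\mu_j)$, while row-block $j$ of $\bk[\cQ c+\cB-J(\bar Qc+\bar b)]$ is exactly $\bk[Q_{k_j}c+b_{k_j}-\lambda_0(c)]$. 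Thus the stacked system above is identical to $\cA\mu=\bk[\cQ c+\cB-J\lambda_0(c)]$, i.e.\ $\mu\in\cU(c)$.

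\textbf{Step 4: Conclude.} Putting Steps 1–3 together, $y\in\sd h(c)$ iff $y=\lambda_0(c)+\bar A\mu$ for some $\mu\in\cU(c)$, which is the second formulation in \eqref{eq:plqsdsystem}. The conversely direction is immediate: given $\mu\in\cU(c)$ with $\mu\ge0$, reading the block-row equations backwards recovers the common value $y=Q_{k_j}c+b_{k_j}+AP_j\mu_j=\lambda_0(c)+\bar A\mu$, so $y\in\sd h(c)$.

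The only non-routine step is the bookkeeping in Step 3, where one must correctly interpret the block structure of $\cA$ and verify that the ``off-diagonal minus $\bk$ times diagonal'' pattern enforces the common-value constraint across all $j$. The minus sign $(1-\bk)$ on the diagonal of $\cA$ is precisely engineered so that averaging the $\bk$ separate constraints $y=Q_{k_j}c+b_{k_j}+AP_j\mu_j$ (Step 2) and then re-expressing each individual constraint as a deviation from that average (Step 3) produces the compact system $\cA\mu=\bk[\cQ c+\cB-J\lambda_0(c)]$.
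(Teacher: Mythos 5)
Your proof is correct and follows essentially the same route as the paper's: obtain the stacked system from \eqref{eq:plqsubdiff} together with \Cref{activepolyhedraequal} and \eqref{eq:APj=A_k(c)}, multiply by $(1/\bk)J^\top$ to isolate $y=\lambda_0(c)+\bar A\mu$, and then recognize the residual per-block constraints as the definition of $\cU(c)$. The only difference is that you carry out the block-row bookkeeping for $\cA$ explicitly in Step~3, whereas the paper treats that identification as immediate.
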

\begin{proof}
	By \eqref{eq:plqsubdiff} and \Cref{activepolyhedraequal}, $y\in\sd h(c)$ if and only if $y\in Q_{k_j}c + b_{k_j} + \ncone{c}{C_{k_j}}$ for all $j\in \set{1,\dotsc,\bk}$. In terms of the active indices at $c$ for the polyhedron $C_{k_j}$, \eqref{eq:ckncone} and \eqref{eq:APj=A_k(c)} imply
	\[
	y = Q_{k_j}c + b_{k_j} + AP_j\mu_j, \text{ where }j\in \set{1,\dotsc,\bar k}, \mu_j\ge0.
	\]
	Hence $y\in \sd h(c)$ if and only if there exists $\mu=(\mu_1^\top,\dotsc,\mu_{\bk}^\top)$ such that $(y, \mu)$ satisfies the system
	\begin{equation*}
	Jy = \cQ c + \cB + \hat\cA\mu,\quad
	\mu = (\mu_1^\top,\dotsc,\mu_{\bk}^\top)^\top\ge0.
	\end{equation*}
	Since $J^\top J = \bar k I_m$, multiplying both sides of the first equation in \eqref{eq:plqsdsystem} by $(1/\bar k)J^\top$ gives $y = \bar Q c + \bar b + \bar A\mu$, where $\mu$ satisfies 
	\[
	\bar Q c + \bar b + \bar A \mu= AP_j\mu_j + Q_{k_j}c + b_{k_j}, \text{for all }j\in\set{1,\dotsc,\bar k}, \mu\ge0.
	\]
	The set of $\mu$ that satisfy the display defines membership in $\cU(c)$, so $\sd h(c) = \lambda_0(c)+\bar A \cU(c)$.
\end{proof}
The notion of nondegeneracy that we use imposes linear independence of the columns of $A$. 
\begin{definition}[Nondegeneracy]
	\label{nondegeneracy}
	Let $\cM_{\bc}$ be as in \eqref{eq:manifold}, let \Cref{assum:ps} hold, and recall the notation of \Cref{def:AmatrixPs}. We say that $\cM_{\bc}$ satisfies the \emph{nondegeneracy condition} if $\Null{A}=\set{0}$.
\end{definition}
Nondegenercy yields a uniqueness property of the multipliers $\mu\in \cU(c)$.
\begin{lemma}
	\label{uniquemugivenv}
	Let $\cM_{\bc}$ be as in \eqref{eq:manifold}, let \Cref{assum:ps} hold, and recall the notation of \Cref{def:AmatrixPs}. Suppose $\cM_{\bc}$ satisfies the nondegeneracy condition of \Cref{nondegeneracy}, $c\in \cM_{\bc}$, and $y \in \sd h(c)$. Then, there is a unique $\mu\in \cU(c)$, given by $\mu(c, y)_j=P_j(A^\top A)^{-1}A^\top(y - (Q_{k_j}c + b_{k_j})), j\in\set{1,\dotsc,\bk}$ so that $y = \lambda_0(c) + \bar A \mu(c,y)$.
\end{lemma}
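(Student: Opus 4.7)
The plan is straightforward linear algebra built on top of the previous \Cref{sdaveraging}. From that lemma, $y\in\sd h(c)$ is equivalent to the existence of $\mu=(\mu_1^\top,\dots,\mu_{\bk}^\top)^\top\ge0$ satisfying the block system $Jy=\cQ c+\cB+\hat\cA\mu$. Unpacking this block equation component by component (since $J$ stacks copies of $I_m$ and $\hat\cA$ is block-diagonal with blocks $AP_j$) yields the $\bk$ equations
\[
AP_j\mu_j \;=\; y - (Q_{k_j}c+b_{k_j}),\qquad j=1,\dots,\bk.
\]

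First I would exploit the nondegeneracy hypothesis $\Null{A}=\{0\}$ of \Cref{nondegeneracy}, which is precisely the statement that $A\in\R^{m\times\ell}$ has full column rank, hence $A^\top A$ is invertible. Multiplying the $j$-th equation above by $A^\top$ and inverting $A^\top A$ gives
\[
P_j\mu_j \;=\; (A^\top A)^{-1}A^\top\bigl(y - (Q_{k_j}c+b_{k_j})\bigr).
\]
Next I would use the key structural property of the matrices $P_j$ from \Cref{def:AmatrixPs}: each $P_j$ is diagonal with $\pm 1$ entries, so $P_j^2=I_\ell$. Multiplying the previous display on the left by $P_j$ then isolates $\mu_j$ and gives the claimed closed form
\[
\mu(c,y)_j \;=\; P_j(A^\top A)^{-1}A^\top\bigl(y-(Q_{k_j}c+b_{k_j})\bigr).
\]
This candidate is the unique vector solving $AP_j\mu_j=y-(Q_{k_j}c+b_{k_j})$ for each $j$, which proves uniqueness of $\mu\in\cU(c)$ (the nonnegativity $\mu\ge0$ is automatic from $y\in\sd h(c)$ and the characterization in \Cref{sdaveraging}).

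Finally, the identity $y=\lambda_0(c)+\bar A\,\mu(c,y)$ is simply the second equality already recorded in \Cref{sdaveraging}, obtained there by multiplying $Jy=\cQ c+\cB+\hat\cA\mu$ through by $(1/\bk)J^\top$ and recognizing the averaged quantities $\bar Q,\bar b,\bar A$ and $\lambda_0(c)=\bar Q c+\bar b$. There is no real obstacle in this argument — the only subtle point is keeping the block structure and the role of $P_j$ (and the identity $P_j^2=I_\ell$) straight, so that the explicit inverse formula on the $A^\top A$ side matches correctly with the block decomposition of $\hat\cA$ used in \eqref{eq:U(c)}.
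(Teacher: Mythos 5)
Your argument is correct and matches the paper's proof essentially line for line: both invoke \Cref{sdaveraging} to obtain the block equations $AP_j\mu_j = y - (Q_{k_j}c + b_{k_j})$, then use nondegeneracy (full column rank of $A$, hence invertibility of $A^\top A$) together with $P_j^2 = I_\ell$ to solve uniquely for $\mu_j$. The paper simply states this more tersely, but there is no substantive difference in approach.
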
 
\begin{proof}
	For any $j\in\set{1,\dotsc,\bar k}$, \Cref{sdaveraging} implies there exists $\mu_j\geq0$ such that $y = Q_{k_j}c + b_{k_j} + AP_j\mu_j$. Nondegeneracy implies $\mu_j$ is given uniquely by the equation
	\(
	\mu(c,y)_j = P_j(A^\top A)^{-1}A^\top(y - (Q_{k_j}c + b_{k_j})).
	\)
\end{proof}
A corresponding notion of strict complementarity is provided by the next lemma.
\begin{lemma}
	\label{risdiffstrict}
	Let $\cM_{\bc}$ be as in \eqref{eq:manifold}, let \Cref{assum:ps} hold, and recall the notation of \Cref{def:AmatrixPs}. Suppose $c\in\cM_{\bc}$ and $\ri{\sd h(c)}\ne\emptyset$. Then $y\in \ri{\sd h(c)}$ if and only if $\mu(c,y)_i>0$ for all $i\in\set{1,\dotsc,\bk}$.
\end{lemma}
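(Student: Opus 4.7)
The plan is to express $\sd h(c)$ as a finite intersection of shifted polyhedral cones, one per active polyhedron, compute the relative interior of each factor explicitly using the nondegeneracy condition, and then recombine via the polyhedral intersection rule. Throughout I treat the nondegeneracy of \Cref{nondegeneracy} as in force, since the very notation $\mu(c,y)$ is defined only under that assumption (\Cref{uniquemugivenv}).

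First I would rewrite the subdifferential factor-by-factor. By \eqref{eq:plqsubdiff}, \eqref{eq:ckncone}, \Cref{activepolyhedraequal}, and \eqref{eq:APj=A_k(c)},
\[
\sd h(c) \;=\; \bigcap_{j=1}^{\bk} S_j, \qquad S_j \;:=\; Q_{k_j}c + b_{k_j} + AP_j(\R^\ell_+),
\]
for any $c\in\cM_{\bc}$. Because the set $\ri{\sd h(c)}$ is nonempty by hypothesis, in particular $\bigcap_j S_j\ne\emptyset$.

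Next I would pin down each $\ri{S_j}$. Under nondegeneracy $A$ has full column rank, hence so does $AP_j$ (as $P_j$ is an orthogonal diagonal $\pm1$ matrix with $P_j^{-1}=P_j$). An injective linear map sends relative interiors of convex sets to relative interiors of their images \cite[Theorem~6.6]{rockafellar2015convex}, so
\[
\ri{AP_j(\R^\ell_+)} \;=\; AP_j(\R^\ell_{++}),\qquad\text{and thus}\qquad \ri{S_j} \;=\; Q_{k_j}c + b_{k_j} + AP_j(\R^\ell_{++}).
\]
Each $S_j$ is polyhedral, and their intersection is nonempty, so the polyhedral relative-interior intersection rule \cite[Corollary~6.5.1]{rockafellar2015convex} gives $\ri{\sd h(c)} = \bigcap_{j=1}^{\bk}\ri{S_j}$. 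Consequently $y\in\ri{\sd h(c)}$ if and only if for every $j\in\set{1,\dotsc,\bk}$ there exists $\nu_j\in\R^\ell_{++}$ with
\[
y - Q_{k_j}c - b_{k_j} \;=\; AP_j\,\nu_j.
\]

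To conclude I identify $\nu_j$ with $\mu(c,y)_j$. Full column rank of $AP_j$ forces this $\nu_j$ to be unique, and applying $(A^\top A)^{-1}A^\top$ to both sides together with $P_j^{-1}=P_j$ yields $\nu_j = P_j(A^\top A)^{-1}A^\top\bigl(y-(Q_{k_j}c+b_{k_j})\bigr) = \mu(c,y)_j$ (cf.\ \Cref{uniquemugivenv}). Thus the condition ``$\nu_j>0$ componentwise for all $j$'' translates exactly to ``$\mu(c,y)_j>0$ for all $j\in\set{1,\dotsc,\bk}$,'' completing both directions.

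The only delicate step is invoking the intersection rule for relative interiors; this is the reason polyhedrality of each $S_j$ (and hence the Rockafellar-Wets representation) is needed instead of a standard constraint-qualification argument. Everything else reduces to linear algebra that is immediate from nondegeneracy.
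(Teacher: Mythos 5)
There is a genuine gap in the forward direction. You invoke a ``polyhedral relative-interior intersection rule'' to conclude $\ri{\sd h(c)}=\bigcap_j\ri{S_j}$ from $\bigcap_j S_j\neq\emptyset$, citing Rockafellar's Corollary~6.5.1. That corollary is about intersecting a convex set with an \emph{affine} set that meets its relative interior --- your sets $S_j$ are shifted polyhedral cones, not affine, so it does not apply. The general intersection rule (Theorem~6.5) requires $\bigcap_j\ri{S_j}\neq\emptyset$, not merely $\bigcap_j S_j\neq\emptyset$, and there is no polyhedral variant that drops this requirement: take $S_1=\R_+\times\set{0}$ and $S_2=-\R_+\times\set{0}$ in $\R^2$; then $\bigcap_j S_j=\set{0}$, $\ri{\bigcap_j S_j}=\set{0}$, but $\bigcap_j\ri{S_j}=\emptyset$. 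Since the nonemptiness of $\bigcap_j\ri{S_j}$ is precisely the existence of a $y$ with $\mu(c,y)_i>0$ for all $i$ --- essentially the content of the lemma --- your invocation of the intersection rule in $(\Rightarrow)$ is circular.

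The backward direction of your argument is fine and can be isolated: the hypothesis $\mu(c,y)_i>0$ for all $i$ directly furnishes $y\in\bigcap_j\ri{S_j}$, so $\bigcap_j\ri{S_j}\neq\emptyset$; only then does Theorem~6.5 apply, giving $\ri{\sd h(c)}=\bigcap_j\ri{S_j}\ni y$. For the forward direction the paper does not use an intersection formula at all. It uses the prolongation characterization of relative interior (Theorem~6.4 in Rockafellar: $y\in\ri{C}$ iff for every $y'\in C$ some point of the form $ty+(1-t)y'$ with $t>1$ stays in $C$), together with the affine uniqueness of $\mu(c,\cdot)$ from \Cref{uniquemugivenv}: if a component $(\mu(c,y)_{i_0})_j$ vanished, pushing past $y$ from a suitable $y'$ would force a negative multiplier component for a point claimed to be in $\sd h(c)$, a contradiction. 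This is a different, local argument that sidesteps the missing intersection rule, and it is the step your proposal needs to supply.
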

\begin{proof}
	By \cite[Theorem 6.4]{rockafellar2015convex}, $y\in \ri{\sd h(c)}$ if and only if for all $y'\in \sd h(c)$, there exists $t>1$ so that $t y + (1-t)y' \in \sd h(c)$. Choose a $y'\in\sd h(c)$ with $y'\neq y$.
	\\
	$(\Rightarrow)$ If there exists $i_0\in\set{1,\dotsc,\bar k}$ and $j\in\set{1,\dotsc,\ell}$, with $(\mu(c,y)_{i_0})_j = 0$, then, by \eqref{eq:plqsdsystem},
	\[
	\sd h(c)\ni ty + (1-t)y' = Q_{i_0}c + b_{i_0} + AP_{i_0}[t\mu(c,y)_{i_0} + (1-t) \mu(c,y')_{i_0}]. 
	\]
	By \Cref{uniquemugivenv}, $\mu(c, ty + (1-t)y')_{i_0}=t\mu(c,y)_{i_0} + (1-t) \mu(c,y')_{i_0}$. By assumption, the right-hand side has its $j$th component is negative for all $t>1$, a contradiction.
	\\
	$(\Leftarrow)$ We must show there exists $\epsilon>0$ such that if $t := 1 + \epsilon$ then $t\mu(c,y)_{i_0} + (1-t) \mu(c,y')_{i_0} > 0$. After rearranging, this is equivalent to finding $\epsilon>0$ so that
	\(
	\mu(c,y)_{i_0} + \epsilon[\mu(c,y)_{i_0} - \mu(c,y')_{i_0}] > 0.
	\)
	If $\mu(c,y)_{i_0} - \mu(c,y')_{i_0}\ge0$, the claim is immediate. Otherwise, we choose $\epsilon$ via
	\[
	0<\epsilon < \min{\bset{\frac{(\mu(c,y)_{i_0})_j}{(\mu(c,y')_{i_0})_j-(\mu(c,y)_{i_0})_j}}{(\mu(c,y)_{i_0})_j - (\mu(c,y')_{i_0})_j < 0,j\in \set{1,\dotsc,\ell}}}.
	\] 
	Then $y\in \ri{\sd h(c)}$.
\end{proof}
However, a weaker notion of strict complementarity in conjunction with nondegeneracy suffices to show that $\ri{\sd h(c)}\neq\emptyset$ throughout $\cM_{\bc}$.
\begin{definition}[$k$-strict complementarity]
	\label{def:kstrictcc}
	Let $\cM_{\bc}$ be as in \eqref{eq:manifold}, let \Cref{assum:ps} hold, and recall the notation of \Cref{def:AmatrixPs}. We say \emph{$k$-strict complementarity} holds at $(c, y)$ for $\mu=(\mu_1^\top,\dotsc,\mu_{\bk}^\top)^\top$ if
	\begin{enumerate}[label=(\alph*)]
		\item $c\in\cM_{\bc},\ y\in \sd h(c)$,
		\item There exists $k\in \cK(\bc)$ with $\mu_k>0$,
		\item Whenever there exists $j\in \cK(c)\setminus\set{k}$ and $i\in \set{1,\dotsc,\ell}$ with $(\mu_j)_i=0$, then the scalars $(P_{j'})_{ii}=1$ for all $j'\in \cK(c)$, \label{kstrict-sameside}
		\item $(y,\mu)$ satisfies \eqref{eq:plqsdsystem}.
	\end{enumerate}
\end{definition}
\begin{remark}
	When $k$-strict complementarity holds at a pair $(c,y)$ and an index $j$ satisfies \ref{kstrict-sameside}, the active polyhedra $\set{C_k}_{k\in \cK(\bc)}$ are all within the same closed half-space of the corresponding hyperplane. Also observe that $y\in\ri{\sd h(c)}$ implies $k$-strict complementarity at $(c,y)$.
\end{remark}
A requirement of partial smoothness is that the normal space to $\cM_{\bc}$ and $\parr{\sd h(c)}$ are equal. The nondegeneracy condition allows us to describe $\parr{\sd h(c)}$ using the vectors in $\cU(c)$ rather than the subgradients in $\sd h(c)$.
\begin{lemma}
	Let $\cM_{\bc}$ be as in \eqref{eq:manifold}, let \Cref{assum:ps} hold, and recall the notation of \Cref{def:AmatrixPs}. Suppose $\cM_{\bc}$ satisfies the nondegeneracy condition. Then, for any $c\in\cM_{\bc}$,
	\begin{equation}
	\label{eq:subspaceiff}
	\parr{\sd h(c)} =\Ran{A} \Longleftrightarrow \parr{\cU(c)}=\Null{\cA}.
	\end{equation}
\end{lemma}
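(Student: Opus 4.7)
The plan is to exploit the representation $\sd h(c) = \lambda_0(c) + \bar A\,\cU(c)$ from Lemma \ref{sdaveraging}, together with the explicit block structure of $\cA$, to prove that the restriction of $\bar A$ to $\Null{\cA}$ is a linear bijection onto $\Ran{A}$. Once this bijectivity is in hand, the biconditional follows almost tautologically: since $\parr{\cU(c)}$ is always a subspace of $\Null{\cA}$ (being the parallel subspace of the intersection of a translate of $\Null{\cA}$ with the nonnegative orthant), and since translation invariance of the parallel subspace together with linearity of $\bar A$ yields $\parr{\sd h(c)} = \bar A\parr{\cU(c)}$, one concludes that for any subspace $S \subset \Null{\cA}$ the equality $\bar A S = \bar A\Null{\cA}$ is equivalent to $S = \Null{\cA}$.

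The key computational step is unpacking the block structure of $\cA$ in \eqref{eq:sdcoeffeq}. Reading off the $i$-th row block, one finds
\[
(\cA\mu)_i = \sum_{j\neq i} AP_j\mu_j + (1-\bk)AP_i\mu_i = \bk\bigl(\bar A\mu - AP_i\mu_i\bigr),
\]
using $\bar A\mu = (1/\bk)\sum_j AP_j\mu_j$. Therefore $\mu \in \Null{\cA}$ if and only if $AP_i\mu_i = \bar A\mu$ for every $i \in \set{1,\dotsc,\bk}$.

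From here the two properties of $\bar A|_{\Null{\cA}}$ follow quickly from the nondegeneracy assumption $\Null{A}=\set{0}$ together with the observation that each $P_j$ is diagonal with $\pm 1$ entries, and hence invertible with $P_j^2 = I_\ell$. For injectivity on $\Null{\cA}$: if $\mu \in \Null{\cA}$ satisfies $\bar A\mu = 0$, then $AP_i\mu_i = 0$, and hence $\mu_i = 0$ for every $i$. For surjectivity onto $\Ran{A}$: the inclusion $\bar A\Null{\cA} \subset \Ran{A}$ is immediate from $\bar A\mu = AP_i\mu_i \in \Ran{A}$, and the reverse inclusion is witnessed, for any prescribed $v \in \R^\ell$, by the vector $\mu$ with blocks $\mu_i := P_iv$; this satisfies $AP_i\mu_i = Av$ uniformly in $i$, so $\mu \in \Null{\cA}$ with $\bar A\mu = Av$. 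Combining these two properties with the reduction in the first paragraph gives \eqref{eq:subspaceiff}.

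I expect the main obstacle to be careful bookkeeping of sizes and of the sign matrices $P_j$ in the block computation of $\cA\mu$; beyond that, the content is elementary linear algebra, with the nondegeneracy hypothesis playing the single structural role of upgrading $\bar A$ from a map on the large space $\R^{\bk\ell}$ to an isomorphism when restricted to $\Null{\cA}$.
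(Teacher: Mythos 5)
Your proof is correct and follows essentially the same route as the paper: both identify $\Null{\cA}$ via the nondegeneracy hypothesis, use $\sd h(c) = \lambda_0(c) + \bar A\,\cU(c)$ and the inclusion $\parr{\cU(c)} \subset \Null{\cA}$, and then reduce the biconditional to linear algebra on $\bar A$. The one small improvement in your presentation is that you make the injectivity of $\bar A$ on $\Null{\cA}$ explicit (and therefore the bijectivity onto $\Ran{A}$), whereas the paper's final step (``$\zeta_p \notin \parr{\cU(c)}$ implies $Ae_p \notin \parr{\sd h(c)}$'') relies on this injectivity tacitly.
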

\begin{proof}
	By \Cref{Mfacialrep}, $\ncone{c}{\cM_{\bc}} = \Ran{A}$, and by \Cref{sdaveraging}, $\sd h(c) = \lambda_0(c) + \bar A \cU(c)$. The system of linear equations \eqref{eq:U(c)} in $\cU(c)$ has coefficient matrix $\cA$ defined in \eqref{eq:sdcoeffeq} which is block-circulant and can be block row-reduced to
	\begin{equation}
	\label{eq:rrefcirculant}	
	\begin{pmatrix}
	AP_1 & 0 & 0 & \cdots & -A\\
	0 & AP_2 & 0 &\cdots & -A \\
	\vdots & \ddots & \ddots & \ddots& \vdots \\
	0 & \cdots & 0 & AP_{\bar k - 1} & -A \\
	0 & \cdots & \cdots & 0 & 0
	\end{pmatrix}.
	\end{equation}
	We now compute $\Null{\cA}$. Suppose $\mu=(\mu_1^\top,\dotsc,\mu_{\bk}^\top)^\top\in \Null{\cA}$. Then \eqref{eq:rrefcirculant} and nondegeneracy imply that $\mu\in\Null{\cA}$ if and only if $\mu_j = P_j\mu_{\bar k}$ for all $j\in\set{1,\dotsc,\bar k - 1}$, i.e.,
	\begin{equation}
	\Null{\cA} = \bset{\begin{pmatrix}
		P_1\mu_{\bar k} \\ \vdots \\ P_{\bar k - 1}\mu_{\bar k} \\ \mu_{\bar k}
		\end{pmatrix}}{\mu_{\bar k}\in \R^\ell},
	\text{with basis }
	\label{eq:zeta}
	\bset{\begin{pmatrix}
		P_1e_p \\ \vdots \\ P_{\bar k - 1}e_p \\ e_p
		\end{pmatrix}}{p\in \set{1,\dotsc,\ell}} =: \set{\zeta_1,\dotsc,\zeta_\ell}.
	\end{equation}
	By \eqref{eq:U(c)},
	\begin{equation}
	\label{eq:U(c) subset Null(cA)}
	\parr{\cU(c)}:=\R(\cU(c) - \cU(c)) \subset \Null{\cA},
	\end{equation}	
	and since $\bar A = \frac{1}{\bar k}\begin{bmatrix}
	AP_1 & \cdots & AP_{\bar k - 1} & A
	\end{bmatrix}$, \eqref{eq:plqsdsystem} implies
	\begin{align*}
	\parr{\sd h(c)} = \parr{\bar A\cU(c)}
	= \bar A \parr{\cU(c)}
	\subset \bar A \Null{\cA}= \bset{A \mu_k}{\mu_k\in\R^\ell} = \Ran{A},
	\end{align*}
	so $(\Leftarrow)$ in \eqref{eq:subspaceiff} is clear as ``$\subset$" becomes an equation.
	For $(\Rightarrow)$, suppose strict containment: $\parr{\cU(c)}\subsetneq\Null{\cA}$. Then there exists $p\in\set{1,\dotsc,\ell}$ such that $\zeta_p\not\in \parr{\cU(c)}$. This implies that the $p$th column of $A$ is not in $\parr{\sd h(c)}$ which we have assumed equal to $\Ran{A}$. This contradiction establishes \eqref{eq:subspaceiff}.
\end{proof}
We now show that nondegeneracy and $k$-strict complementarity together imply that the normal space and subdifferential are parallel.
\begin{lemma}
	\label{pslemma}
	Let $\cM_{\bc}$ be as in \eqref{eq:manifold}, let \Cref{assum:ps} hold, and recall the notation of \Cref{def:AmatrixPs}. Suppose $\cM_{\bc}$ satisfies the nondegeneracy condition, and the $k$-strict complementarity of \Cref{def:kstrictcc} holds at $(c, y)$ for $\mu$.
	Then,
	\begin{equation}
	\label{eq:goal}
	\parr{\sd h(c)}=\ncone{c}{\cM_{\bc}},
	\end{equation}
	where it is shown in \Cref{Mfacialrep}  that $\ncone{c}{\cM_{\bc}}=\Ran{A}$.
	Moreover, \eqref{eq:goal} holds throughout $\cM_{\bc}$, and $\sd h$ is inner semicontinuous relative to $\cM_{\bc}$.
\end{lemma}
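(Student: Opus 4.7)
The plan is to reduce \eqref{eq:goal} to the equivalent identity $\parr{\cU(c)}=\Null{\cA}$ via the equivalence \eqref{eq:subspaceiff} just established, together with $\ncone{c}{\cM_{\bc}}=\Ran{A}$ from \Cref{Mfacialrep}. The previous lemma already supplied the inclusion $\parr{\cU(c)}\subset\Null{\cA}$ and the explicit basis $\set{\zeta_1,\dotsc,\zeta_\ell}$ of $\Null{\cA}$ in \eqref{eq:zeta}, so only the reverse inclusion remains, and for this it suffices to show $\zeta_p\in\parr{\cU(c)}$ for each $p$.

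For each $p$, I would produce $t>0$ small enough that $\mu+t\zeta_p\in\cU(c)$, where $\mu$ is the multiplier witnessing $k$-strict complementarity at $(c,y)$ with $\mu_{k_0}>0$ componentwise; once this is done, $\zeta_p=t^{-1}\bigl[(\mu+t\zeta_p)-\mu\bigr]\in\parr{\cU(c)}$. The affine constraint $\cA(\mu+t\zeta_p)=\cA\mu$ is automatic because $\zeta_p\in\Null{\cA}$, so only nonnegativity needs to be verified. Since $\zeta_p$ perturbs only the $p$-th coordinate of each block $\mu_j$, this amounts to checking $(\mu_j)_p+t(P_j)_{pp}\ge0$ for every $j\in\set{1,\dotsc,\bk}$. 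If $(\mu_j)_p>0$ for all such $j$, any sufficiently small $\abs{t}$ works. Otherwise there exists $j_0\ne k_0$ with $(\mu_{j_0})_p=0$, and $k$-strict complementarity forces $(P_{j'})_{pp}=1$ for all $j'\in\cK(c)$; the perturbed $p$-th entry is then $(\mu_j)_p+t$, which is nonnegative for small $t>0$ and strictly positive at $j=k_0$ because $(\mu_{k_0})_p>0$. Either way $\mu+t\zeta_p\in\cU(c)$, yielding $\zeta_p\in\parr{\cU(c)}$ and hence \eqref{eq:goal} at $c$.

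To extend \eqref{eq:goal} throughout $\cM_{\bc}$ and establish inner semicontinuity, the plan is to invoke Hoffman's lemma on the polyhedral system defining $\cU(\cdot)$. Given any $y\in\sd h(c)$, write $y=\lambda_0(c)+\bar A\mu$ with $\mu\in\cU(c)$; for $c^n\to c$ in $\cM_{\bc}$, the continuous dependence of the right-hand side of \eqref{eq:U(c)} on $c$ combined with Hoffman supplies $\mu^n\in\cU(c^n)$ with $\mu^n\to\mu$, and then $y^n:=\lambda_0(c^n)+\bar A\mu^n$ lies in $\sd h(c^n)$ and converges to $y$, proving inner semicontinuity. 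Applied to the distinguished $\mu$ from the hypothesis, the same Hoffman estimate yields $\mu^{c'}\in\cU(c')$ near $\mu$ for $c'\in\cM_{\bc}$ near $c$; by continuity its zero set lies inside that of $\mu$, so $k$-strict complementarity transfers to $(c',\lambda_0(c')+\bar A\mu^{c'})$ and the second paragraph gives \eqref{eq:goal} at $c'$. Connectedness of $\cM_{\bc}$ (it is the relative interior of a polyhedron, hence convex), together with a closedness argument for the set on which \eqref{eq:goal} holds, then extends the conclusion to all of $\cM_{\bc}$. I expect the main obstacle to be this global closedness: along a sequence $c^n\to c^\ast$ in $\cM_{\bc}$, the multipliers witnessing $k$-strict complementarity at each $c^n$ may degenerate in the limit, so one must argue that some witness survives at $c^\ast$, likely by extracting a bounded subsequence and exploiting outer semicontinuity of $\sd h$.
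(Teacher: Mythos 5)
Your first paragraph matches the paper's argument essentially verbatim: reduce \eqref{eq:goal} to $\parr{\cU(c)}=\Null{\cA}$ via \eqref{eq:subspaceiff}, then show $\zeta_p\in\parr{\cU(c)}$ for each $p$ by perturbing $\mu$ along $\zeta_p$, using $k$-strict complementarity condition \ref{kstrict-sameside} to take $t>0$ when some block has a zero entry. This part is correct and is the paper's proof.

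Where you diverge is the global extension and inner semicontinuity, and you have correctly diagnosed your own gap. Your Hoffman argument gives \emph{local} stability of the strictly positive multiplier, hence openness of the set $S:=\bset{c'\in\cM_{\bc}}{\parr{\cU(c')}=\Null{\cA}}$ in $\cM_{\bc}$; but you have no mechanism for the closedness of $S$ in $\cM_{\bc}$, and the complement $\cM_{\bc}\setminus S$ need not be open, so the connectedness appeal stalls exactly where you expect it to. The paper resolves this in one stroke with a global convex-combination trick that exploits the relative openness of $\cM_{\bc}$: for \emph{any} $c'\in\cM_{\bc}$, choose $c''\in\cM_{\bc}$ slightly beyond $c'$ on the ray from $c$ so that $c'=\lambda c+(1-\lambda)c''$ with $\lambda\in(0,1)$. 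Because the system \eqref{eq:plqsdsystem} defining $(\sd h(c),\cU(c))$ is affine in $c$, the point $y':=\lambda y+(1-\lambda)y''$ (with $y''\in\sd h(c'')$ arbitrary) lies in $\sd h(c')$ with $\mu(c',y')=\lambda\mu(c,y)+(1-\lambda)\mu(c'',y'')$, which is componentwise strictly positive since $\mu(c,y)>0$, $\lambda>0$, and $\mu(c'',y'')\ge0$. By \Cref{risdiffstrict}, $y'\in\ri{\sd h(c')}$, and the first part of the proof then applies at $c'$. This is a global argument with no passage to limits, so the degeneration-in-the-limit concern simply does not arise. The same identity \eqref{eq:strictcomplementarityopen} shows $\sd h|_{\cM_{\bc}}$ is graph-convex, and the paper then invokes \cite[Theorem 5.9(b)]{rockafellar_wets_1998} for inner semicontinuity, rather than Hoffman. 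Your Hoffman route to inner semicontinuity alone is fine (given that $\cU(c^n)\neq\emptyset$ by \Cref{plqformulae} and \Cref{sdaveraging}, Hoffman's bound with the affine right-hand side gives $\mu^n\to\mu$), but it does not close the global extension of \eqref{eq:goal}; to repair your argument you would need to replace the openness-plus-closedness plan with something like the paper's affine interpolation.
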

\begin{proof}
	We first show that a sufficient condition to guarantee the right-hand side of \eqref{eq:subspaceiff} is $(c,v)$ satisfying the $k$-strict complementarity condition of \Cref{def:kstrictcc} for $\mu\in \cU(c)$. To see this note that, by relabeling the active polyhedral sets if necessary, we can assume without loss of generality that the index $k$ in $k$-strict complementarity is $\bk$. Let $p\in\set{1,\dotsc,\ell},\ t\in \R$, and consider the step given by $\mu+t\zeta_p$, where $\zeta_p$ is the $p$th basis element of $\Null{\cA}$ given in \eqref{eq:zeta}, i.e.,
	\begin{equation}
	\label{eq:mu+tzeta}	
	\mu + t \zeta_p := \begin{pmatrix}
	\mu_1 \\ \vdots \\ \mu_{\bar k - 1} \\ \mu_{\bar k}
	\end{pmatrix} + t \begin{pmatrix}
	P_1e_p \\ \vdots \\ P_{\bar k - 1}e_p \\ e_p
	\end{pmatrix},
	\end{equation}
	We consider two cases. If, for all $j\in\set{1,\dotsc,\bar k}$, $(\mu_j)_p > 0$, then for sufficiently small $t$,
	\(
	\mu + t\zeta_p\ge0,\text{ and } \cA(\mu+t\zeta_p) = \cA\mu.
	\)
	That is, both $\mu\in \cU(c)$ and $\mu + t\zeta_p \in \cU(c)$, which implies $\zeta_p\in \parr{\cU(c)}$. Otherwise, there exists $j\in\set{1,\dotsc,\bk}$ with $(\mu_j)_p=0$. By part (c) of $k$-strict complementarity, the scalars $P_{j'}e_p=1$ for all $j'\in \set{1,\dotsc,\bar k}$, so repeating the previous argument with $t>0$ gives $\zeta_p\in \parr{\cU(c)}$. Since $p\in\set{1,\dotsc,\ell}$ was arbitrary, $k$-strict complementarity is a sufficient condition guaranteeing $\parr{\cU(c)}=\Null{\cA}$. 
	\\
	This argument shows, under nondegeneracy, that
	\begin{equation}\label{eq:kstrictimpliesstrict}
	k\text{-strict complementarity at }(c,y)\text{ for }\mu \Longrightarrow \ri{\sd h(c)}\neq\emptyset,
	\end{equation}
	because, given any $\mu\in \cU(c)$, the fact that $\parr{\cU(c)}=\Null{\cA}$ together with \eqref{eq:plqsdsystem} implies there exists a strictly positive $\tilde \mu\in \cU(c)$ and a $\tilde y\in \sd h(c)$ given by $\tilde{y} = \lambda_0(c) + \bar A \tilde\mu$, with $\mu(c,\tilde{y}) = \tilde \mu$. By \Cref{risdiffstrict}, $\tilde y\in\ri{\sd h(c)}$.
	\\
	We now argue that if, for some $c\in\cM_{\bc},\ y\in\sd h(c),\ k$-strict complementarity holds at $(c,y)$ for $\mu$, then $\ri{\sd h(c)}\neq\emptyset$ throughout $\cM_{\bc}$ . This will imply \eqref{eq:goal} holds throughout $\cM_{\bc}$ as well. By \eqref{eq:kstrictimpliesstrict}, suppose $y\in\ri{\sd h(c)}$ so that $\mu(c,y)>0$ by \Cref{risdiffstrict}.
	\\	
	Choose any other $c'\in \cM_{\bc}$. Since $\cM_{\bc}$ is relatively open, there exists $c''\in\cM_{\bc}$ and $\lambda\in(0,1)$ so that $c'=\lambda c + (1-\lambda)c''$. Let $y''\in \sd h(c'')$.  By \Cref{uniquemugivenv}, there exists a unique vector $\mu(c'',y'')$ associated with $(c'',y'')$. Since $c,\ c''\in \cM_{\bc}$ and $\mu(c,y)>0$,  $\lambda \mu(c', y') + (1-\lambda)\mu(c,y)>0$. It follows from \eqref{eq:plqsdsystem} that for all $j\in \set{1,\dotsc,\bar k}$ and $\lambda\in(0,1)$,
	\begin{equation}
	\label{eq:strictcomplementarityopen}
	\lambda y + (1-\lambda) y'' = Q_{k_j}c' + b_{k_j} + AP_j(\lambda \mu(c, y) + (1-\lambda)\mu(c'',y'')).
	\end{equation}
	Define $y' := \lambda y + (1-\lambda) y''$. Then \eqref{eq:strictcomplementarityopen} implies that the equations \eqref{eq:plqsdsystem} defining membership $y' \in \sd h(c')$ are satisfied, with $\mu(c',y')=\lambda \mu(c, y) + (1-\lambda)\mu(c'', y'')>0$, so $y'\in\ri{\sd h(c')}$ by \Cref{risdiffstrict}. Since $c'\in \cM_{\bc}$ was arbitrary, $\ri{\sd h(c)}\neq\emptyset$ for all $\cM_{\bc}$.
	\\
	We lastly establish $\sd h(c)$ is inner semicontinuous relative to $\cM_{\bc}$. The previous paragraph and \eqref{eq:strictcomplementarityopen} showed $\sd h|_{\cM_{\bc}}$ is graph-convex. By defining $S(c) = \sd h(c)$ for $c\in \cM_{\bc}$ and $S(c)=\emptyset$ otherwise and noting the convex sets $\set{c}$ and $\cM_{\bc}$ cannot be separated, \cite[Theorem 5.9(b)]{rockafellar_wets_1998} gives inner semicontinuity of $\sd h$ at all $c\in \cM_{\bc}$ relative to $\cM_{\bc}$.
\end{proof}
The main result of this section shows that partial smoothness follows from nondegeneracy and $k$-strict complementarity.
\begin{theorem}
	\label{thm:ps}
	Let $\cM_{\bc}$ be as in \eqref{eq:manifold}, let \Cref{assum:ps} hold, and recall the notation of \Cref{def:AmatrixPs}. Suppose $\cM_{\bc}$ satisfies the nondegeneracy condition, and $c\in\cM_{\bc}$ and $y\in\sd h(c)$ are such that $(c,y)$ satisfies the $k$-strict complementarity condition of \Cref{def:kstrictcc}. Then $h$ is partly smooth relative to $\cM_{\bc}$.
\end{theorem}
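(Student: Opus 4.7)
My plan is to verify each of the four defining properties of partial smoothness for $h$ relative to $\cM_{\bc}$, leaning heavily on the preceding lemmas. Most of the work has already been done, so the proof is essentially an assembly.

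First I would verify that $\cM_{\bc}$ is a $\cC^2$ manifold. By \Cref{Mfacialrep}, $\cM_{\bc}$ is the intersection of the affine subspace $\{c : A^\top c = \alpha\}$ (where $\alpha$ collects the right-hand sides $\alpha_j$ for $j \in I_{\bk}(\bc)$) with an open polyhedral set defined by strict inequalities. The nondegeneracy condition gives $\Null{A} = \set{0}$, so $A^\top$ is surjective; choosing the defining map $F(c) := A^\top c - \alpha$ on the open set of strict inequalities exhibits $\cM_{\bc}$ as a manifold of codimension $\ell$ about any of its points, with tangent space $\Null{A^\top}$ and normal space $\Ran{A}$, consistent with \Cref{Mfacialrep}.

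Next I would handle property (a), restricted smoothness. For any $c\in \cM_{\bc}$, \Cref{activepolyhedraequal} gives $\cK(c) = \cK(\bc)$, so $\cM_{\bc} \subset C_{k}$ for every $k \in \cK(\bc)$. On $C_k$ the PLQ function $h$ equals the quadratic $q_k(c) := \tfrac{1}{2}\ip{c}{Q_k c} + \ip{b_k}{c} + \beta_k$, which is $\cC^2$-smooth on all of $\R^m$. Thus the choice $g := q_{\bk}$, defined on $V := \R^m$, satisfies $h = g$ on $V \cap \cM_{\bc}$, giving property (a) at every $c\in \cM_{\bc}$. Property (b), existence of subgradients at every point of $\cM_{\bc}$, is immediate from \Cref{sdaveraging} (which shows $\sd h(c) = \lambda_0(c) + \bar A\, \cU(c)$ is nonempty for all $c \in \cM_{\bc}$, or equivalently from \eqref{eq:plqsubdiff} applied to each $k \in \cK(c) = \cK(\bc)$ together with the fact that $\bc$ lies in $\dom{\sd h}$ under our hypotheses).

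Properties (c) and (d) are exactly what \Cref{pslemma} delivers. Under the nondegeneracy condition and $k$-strict complementarity at $(c,y)$ for some $\mu$, that lemma establishes $\parr{\sd h(c')} = \ncone{c'}{\cM_{\bc}} = \Ran{A}$ at \emph{every} $c' \in \cM_{\bc}$, and further asserts that $\sd h$ is inner semicontinuous at every point of $\cM_{\bc}$ relative to $\cM_{\bc}$. Since the $k$-strict complementarity hypothesis is made at a single pair $(c,y)$ but propagates throughout the manifold via \Cref{pslemma}, properties (c) and (d) hold uniformly on $\cM_{\bc}$. The one subtlety — and the step most deserving of care — is confirming that each required property must hold at \emph{every} point of $\cM_{\bc}$ rather than only at $\bc$; this is exactly where \Cref{activepolyhedraequal} (which guarantees $\cM_{c'} = \cM_{\bc}$ and $I_k(c') = I_k(\bc)$ throughout $\cM_{\bc}$) and the propagation argument inside \Cref{pslemma} do their work, so no additional hypothesis or computation is needed. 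Combining (a)--(d) yields partial smoothness of $h$ relative to $\cM_{\bc}$.
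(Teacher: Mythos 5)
Your proof is correct and follows essentially the same route as the paper's: restricted smoothness from the quadratic representation of $h$ on the active pieces, existence of subgradients from the PLQ calculus, and properties (c) and (d) delegated to \Cref{pslemma}. Your additional explicit check that $\cM_{\bc}$ is a manifold of codimension $\ell$ under nondegeneracy (via $F(c)=A^\top c-\alpha$) is a point the paper leaves implicit in \Cref{Mfacialrep}, but it does not alter the argument.
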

\begin{proof}
	By definition of $\cM_{\bc}$, for any $k\in \cK(\bc)$ and any $c\in \cM_{\bc},\ h(c) = \frac{1}{2}\ip{c}{Q_kc} + \ip{b_k}{c} + \beta_k$, so $h|_{\cM_{\bc}}$ is smooth. By \Cref{plqformulae}, $\dom{\sd h} = \dom{h}\supset \cM_{\bc}$, so existence of subgradients holds throughout $\cM_{\bc}$ as well. The normal cone and subdifferential being parallel along with subdifferential inner semicontinuity relative to $\cM_{\bc}$ are the content of \Cref{pslemma}.
\end{proof}
\begin{remark}\label{remark:nondegenandtransversality}
	Observe that if the hypotheses of \Cref{thm:ps} are satisfied, the assumption that $f$ satisfies \eqref{eq:generaltransversality} at $\bx$ is equivalent to requiring
	\begin{equation}
	\label{eq:pstransversality}
	\Null{\nabla c(\bar{x})^\top} \cap \ran{A} = \{0\}.
	\end{equation}
	This condition and the nondegeneracy condition imply the $n\times \ell$ matrix $\nabla c(\bx)^\top A$ has full rank equal to $\ell\leq n$, i.e., $\Null{\nabla c(\bx)^\top A} = \set{0}$.
\end{remark}
We now show the assumptions of \Cref{thm:ps} allow us to write the cone of non-ascent directions as a subspace at strictly critical points.
\begin{lemma}[Non-ascent directions]
	\label{lem:D(x)andA^Tc'(x)}
	Let $\cM_{\bc}$ be as in \eqref{eq:manifold}, let \Cref{assum:ps} hold, and recall the notation of \Cref{def:AmatrixPs}. Suppose $f$ satisfies \eqref{eq:bcq} at $\bx,\ \by\in M(\bx)$, and $\bc:=c(\bx)$. Then, $D(\bx) \supset \Null{A^\top \nabla c(\bx)}$. If, in addition, $f$ satisfies \eqref{eq:strictcriticality} at $\bx$ for $\by$ and $\cM_{\bc}$ satisfies the nondegeneracy condition, then $D(\bx) \subset \Null{A^\top \nabla c(\bx)}$.
	\\
\end{lemma}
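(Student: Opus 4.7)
The plan is to verify both inclusions by working through the polyhedral characterization of $D(\bx)$ in \eqref{eq:plqnonascent} and exploiting the subgradient representation \eqref{eq:plqsubdiff} of $\by\in M(\bx)$ at each active polyhedron. Throughout, the key identity is that for any $k\in \cK(\bc)$, the tangent cone $\tcone{\bc}{C_k}$ is cut out by the rows of $(AP_k)^\top$ (i.e.\ by $A^\top$ up to componentwise sign flips coming from $P_k$), and the subspace $\Null{A^\top}$ is contained in $\tcone{\bc}{C_k}$.

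For the inclusion $\Null{A^\top \nabla c(\bx)}\subset D(\bx)$, I would pick $d$ with $A^\top\nabla c(\bx)d=0$. Since $\nabla c(\bx)d\in \Null{A^\top}\subset\tcone{\bc}{C_k}$ for every $k\in \cK(\bc)$ (by \Cref{Mfacialrep}), the tangent cone requirement in \eqref{eq:plqnonascent} holds for any active $k$. Next, fix some $k\in \cK(\bc)$ and use $\by\in M(\bx)\subset\sd h(\bc)$ together with \eqref{eq:plqsubdiff} to write $\by = Q_k\bc + b_k + AP_k\mu_k$ for some $\mu_k\ge 0$. Then
\begin{equation*}
\langle Q_k\bc+b_k,\nabla c(\bx)d\rangle = \langle \by,\nabla c(\bx)d\rangle - \langle \mu_k, P_k A^\top\nabla c(\bx)d\rangle = \langle \nabla c(\bx)^\top\by,d\rangle - 0 = 0,
\end{equation*}
because $\nabla c(\bx)^\top\by=0$ (as $\by\in M(\bx)$) and $A^\top\nabla c(\bx)d=0$. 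By \eqref{eq:plqnonascent}, $d\in D(\bx)$.

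For the reverse inclusion under \eqref{eq:strictcriticality} and nondegeneracy, I would first invoke \Cref{uniquemugivenv} (nondegeneracy) to obtain a unique $\mu(\bc,\by)$, and then \Cref{risdiffstrict} together with strict criticality ($\by\in\ri{\sd h(\bc)}$) to deduce that all components $\mu(\bc,\by)_k$ are \emph{strictly positive}. Now suppose $d\in D(\bx)$; by \eqref{eq:plqnonascent} choose $k_0\in \cK(\bc)$ with $\nabla c(\bx)d\in \tcone{\bc}{C_{k_0}}$ and $\langle Q_{k_0}\bc+b_{k_0},\nabla c(\bx)d\rangle\le 0$. Setting $\xi:=A^\top\nabla c(\bx)d$, the tangent cone condition translates to $P_{k_0}\xi\le 0$ componentwise, while the non-ascent inequality, together with the representation $Q_{k_0}\bc+b_{k_0}=\by-AP_{k_0}\mu(\bc,\by)_{k_0}$ and $\nabla c(\bx)^\top\by=0$, yields
\begin{equation*}
0\ge \langle Q_{k_0}\bc+b_{k_0},\nabla c(\bx)d\rangle = -\langle \mu(\bc,\by)_{k_0}, P_{k_0}\xi\rangle.
\end{equation*}
Thus $\langle \mu(\bc,\by)_{k_0}, P_{k_0}\xi\rangle\ge 0$; combined with $\mu(\bc,\by)_{k_0}>0$ and $P_{k_0}\xi\le 0$, this forces $P_{k_0}\xi=0$, hence $\xi=A^\top\nabla c(\bx)d=0$, giving $d\in\Null{A^\top\nabla c(\bx)}$.

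The routine step is the sign bookkeeping with the diagonal matrices $P_j$ (recall $AP_j=A_{k_j}(c)$ from \eqref{eq:APj=A_k(c)}); the only genuinely substantive ingredient is the appeal to \Cref{risdiffstrict} that converts \eqref{eq:strictcriticality} plus nondegeneracy into strict positivity of every block $\mu(\bc,\by)_k$, which is what prevents cancellation in the inner product $\langle \mu(\bc,\by)_{k_0}, P_{k_0}\xi\rangle$ and forces $\xi=0$. That strict positivity is really the heart of the second inclusion.
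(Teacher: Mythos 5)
Your proof is correct. The forward inclusion is essentially the paper's argument: both of you rewrite the non-ascent conditions of \eqref{eq:plqnonascent} using the representation $\by=Q_{k_j}\bc+b_{k_j}+AP_j\bmu_j$ and the fact that $\nabla c(\bx)^\top\by=0$, after which $A^\top\nabla c(\bx)d=0$ trivially satisfies every condition in the union. For the reverse inclusion, however, you take a genuinely different route. The paper decomposes $d=d_1+\nabla c(\bx)^\top Aw$ with $d_1\in\Null{A^\top\nabla c(\bx)}$, uses the support-function formula $h'(c(\bx);\nabla c(\bx)d)=\sup_{y\in\sd h(\bc)}\ip{\nabla c(\bx)^\top y}{d}$, and exploits $\by\in\ri{\sd h(\bc)}$ via \Cref{pslemma} (so that $\Ran{A}\subset\parr{\sd h(\bc)}$ and $\by+\eps Aw$ is an admissible subgradient), forcing $\eps\norm{\nabla c(\bx)^\top Aw}^2\le 0$. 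You instead stay entirely inside the polyhedral description: the tangent-cone membership gives $P_{k_0}\xi\le 0$ componentwise, the non-ascent inequality gives $\ip{\mu(\bc,\by)_{k_0}}{P_{k_0}\xi}\ge0$, and \Cref{risdiffstrict} converts $\by\in\ri{\sd h(\bc)}$ into strict positivity of $\mu(\bc,\by)_{k_0}$, which forces $\xi=0$. Both arguments hinge on the same hypothesis ($\by$ in the relative interior of the subdifferential, supplied by \eqref{eq:strictcriticality}), but yours is more elementary — pure sign bookkeeping with no perturbation of the subgradient, no appeal to \Cref{pslemma}, and no need for the transversality consequence in \Cref{remark:nondegenandtransversality} that the paper invokes at the end — whereas the paper's version reuses the partial-smoothness machinery ($\parr{\sd h(\bc)}=\Ran{A}$) it has already built and makes the geometric role of the subdifferential's affine hull explicit.
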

\begin{proof}
	Since $f$ satisfies \eqref{eq:bcq} at $\bx$, \Cref{thm:cvxcompfonc} gives
	\(
	D(\bx) = \bset{d\in \R^n}{h'(c(\bx);\nabla c(\bx)d)\le0}.
	\)
	$(\supset)$ Since $\by\in M(\bx)$, by \eqref{eq:plqsdsystem}, there exists $\bmu\in \cU(\bc)$ so that $J\by = \cQ\bc + \cB + \hat\cA \bmu$. Then, for any $j\in \set{1,\dotsc,\bk}$,
	\begin{align*}
	D(\bx) 
	&= \bigcup_{j=1}^{\bk} \bset{d}
	{
		\begin{aligned}
		& \ip{Q_{k_j}\bar c + b_{k_j}}{\nabla c(\bx)d} \le 0 \\
		& P_jA^\top \nabla c(\bx)d\leq0
		\end{aligned}
	}\quad\text{ by \eqref{eq:plqnonascent}, \Cref{def:AmatrixPs}}\\
	&= \bigcup_{j=1}^{\bk} \bset{d}
	{
		\begin{aligned}
		& \ip{\by-AP_j\bmu_j}{\nabla c(\bx)d} \le 0 \\
		& P_jA^\top \nabla c(\bx)d\leq0
		\end{aligned}
	}\quad\text{ since }\by\in M(\bx)\\
	&= \bigcup_{j=1}^{\bk} \bset{d}
	{
		\begin{aligned}
		& \ip{\bmu_j}{P_jA^\top\nabla c(\bx)d} \ge 0 \\
		& P_jA^\top \nabla c(\bx)d\leq0
		\end{aligned}
	}.
	\end{align*}
	The inclusion follows.
	\\
	$(\subset)$ 
	Let $0\neq d\in D(\bx)$, and suppose to the contrary that $d = d_1 + d_2$, where $d_1\in\Null{A^\top\nabla c(\bx)}$ and $d_2=\nabla c(\bx)^\top Aw$, $w\neq0$. By \Cref{pslemma}, $\Ran{A}\subset \parr{\sd h(\bc)}$. Since $\by\in \ri{\sd h(\bc)}$, there exists $\epsilon>0$ so that $\by + \epsilon Aw\in \sd h(\bc)$. Then,
	\begin{align*}
	0 &\ge h'(c(\bx);\nabla c(\bx)d) \\
	&= \sup_{y\in \sd h(\bc)}\ip{\nabla c(\bx)^\top y}{d}\\
	&\ge \ip{\by + \epsilon Aw}{\nabla c(\bx)(d_1 + \nabla c(\bx)^\top Aw)} \\
	&\ge \ip{\nabla c(\bx)^\top\by}{d} + \epsilon\norm{\nabla c(\bx)^\top Aw}^2 \\
	&= \epsilon\norm{\nabla c(\bx)^\top Aw}^2,
	\end{align*}
	so $w=0$ (see \Cref{remark:nondegenandtransversality}).
\end{proof}
By a continuity argument in $(x,y)$, we have the following result which is important for our discussion of the metric regularity of Newton's iteration in the next section. It states that, in the presence of partial smoothness, \eqref{eq:generaltransversality} and the curvature condition are local properties.
\begin{lemma}
	\label{posdeflocally}
	Suppose \eqref{eq:pstransversality} holds and that for all $j\in \cK(\bc)$ and
	\begin{equation*}
	d^\top\nabla c(\bx)^\top Q_j \nabla c(\bx)d + d^\top \nabla^2(\by c)(\bx) d > 0,\quad\forall\ d\in \Null{A^\top \nabla c(\bx)}\setminus\set{0}.
	\end{equation*}
	Then, there exists a neighborhood $\cN$ of $(\bx,\by)$ such that if $(x,y)\in \cN$ then for all $j\in \cK(\bc)$,
	\begin{equation}
	\label{eq:posdefhess2}
	d^\top\nabla c(x)^\top Q_j \nabla c(x)d + d^\top\nabla^2(y c)(x)d > 0,\quad\forall\  d\in \Null{A^\top \nabla c(x)}\setminus\set{0}.
	\end{equation}
	and
	\(
	\Null{\nabla c(x)^\top}\cap\Ran{A} = \set{0}.
	\)
\end{lemma}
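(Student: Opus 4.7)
The plan is to prove both conclusions by standard continuity/compactness arguments, treating the two assertions separately and then intersecting the resulting neighborhoods.

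First I would dispatch the transversality condition. Under the nondegeneracy hypothesis, $\Null{A}=\set{0}$, so $\Null{\nabla c(\bx)^\top}\cap\Ran{A}=\set{0}$ is equivalent to $\nabla c(\bx)^\top A\in\R^{n\times\ell}$ having full column rank~$\ell$. Since $c$ is $\cC^2$, the entries of $\nabla c(x)^\top A$ depend continuously on $x$, and having rank $\ell$ is an open condition (some $\ell\times\ell$ minor is nonzero). Hence there is a neighborhood $U_1$ of $\bx$ on which $\nabla c(x)^\top A$ retains full column rank, yielding $\Null{\nabla c(x)^\top}\cap\Ran{A}=\set{0}$ for all $x\in U_1$.

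Next I would prove the curvature condition \eqref{eq:posdefhess2} by contradiction, exploiting finiteness of $\cK(\bc)$ together with compactness of the unit sphere. Suppose no neighborhood works; then there exist sequences $(x^n,y^n)\to(\bx,\by)$, indices $j^n\in \cK(\bc)$, and unit vectors $d^n\in\Null{A^\top\nabla c(x^n)}$ with
\[
(d^n)^\top\bigl[\nabla c(x^n)^\top Q_{j^n}\nabla c(x^n)+\nabla^2(y^n c)(x^n)\bigr]d^n\le 0.
\]
Since $\cK(\bc)$ is finite, pass to a subsequence with constant index $j^n\equiv j$; by compactness of the unit sphere, pass again to extract $d^n\to\bd$ with $\|\bd\|=1$. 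Using $A^\top\nabla c(x^n)d^n=0$ and continuity of $\nabla c$, taking limits yields $A^\top\nabla c(\bx)\bd=0$, i.e., $\bd\in\Null{A^\top\nabla c(\bx)}\setminus\set{0}$. Since $c\in\cC^2$, the map $(x,y)\mapsto \nabla c(x)^\top Q_j\nabla c(x)+\nabla^2(y c)(x)$ is continuous, so passing to the limit in the displayed inequality gives
\[
\bd^\top\bigl[\nabla c(\bx)^\top Q_j\nabla c(\bx)+\nabla^2(\by c)(\bx)\bigr]\bd\le 0,
\]
contradicting the hypothesis. Therefore there is a neighborhood $\cN_2$ of $(\bx,\by)$ on which \eqref{eq:posdefhess2} holds for every $j\in\cK(\bc)$. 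Taking $\cN:=(U_1\times\R^m)\cap\cN_2$ completes the argument.

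Neither step presents a genuine obstacle, but the one requiring care is ensuring that both the index $j$ and the varying kernel $\Null{A^\top\nabla c(x^n)}$ are handled correctly when passing to the limit: finiteness of $\cK(\bc)$ lets us fix $j$, and the full-rank property of $\nabla c(x)^\top A$ (established in the first step) guarantees that these kernels vary continuously, so that a unit limit $\bd$ lies in $\Null{A^\top\nabla c(\bx)}$ rather than degenerating. This is the only subtle point; everything else is continuity of $\cC^1$ and $\cC^2$ data.
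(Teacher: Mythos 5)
The paper states this lemma without proof, merely remarking that it follows ``by a continuity argument in $(x,y)$''; your proposal supplies exactly that argument and is correct. The first step (full column rank of $\nabla c(\bx)^\top A$ being an open condition under nondegeneracy) matches the observation already made in the paper's Remark~\ref{remark:nondegenandtransversality}, and the compactness-plus-contradiction argument for the curvature bound is the standard continuity argument the paper alludes to.

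One small caveat: your closing remark that the full-rank property of $\nabla c(x)^\top A$ is needed to ``guarantee that these kernels vary continuously'' is a red herring. The limit argument never requires the dimension of $\Null{A^\top\nabla c(x^n)}$ to be stable, only that membership in the null space is a closed condition: $A^\top\nabla c(x^n)d^n=0$ and $(x^n,d^n)\to(\bx,\bd)$ force $A^\top\nabla c(\bx)\bd=0$ by continuity alone, while $\|\bd\|=1$ is preserved by passing to the limit on the sphere. The full-rank property is needed only for the transversality conclusion, not for the curvature conclusion, so your proof would go through unchanged if you dropped that final sentence.
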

The following examples are inspired by the discussion in \cite{lewis2002active}.
\begin{example}
	In $\R^2$, let $h_a(c) = \norm{c}_1^2$, so $h$ is piecewise linear-quadratic convex.
	If $\cM := \set{0}$, then $h_a$ is not partly smooth relative to $\cM$ because $\sd h_a(0) = \set{0}$ while $\ncone{0}{\cM}=\R^n$. On the other hand, if $h_b(c) = \norm{c}_1$ with the same domain representation, then $\sd h(0) = \bB_\infty$, in which case $h_b$ is partly smooth relative to $\cM$.
	\\
	Suppose we represent the domain of $h_a$ and $h_b$ as the four quadrants in the plane, relative to each of which $h_a, h_b$ are linear-quadratic. This representation meets the criteria of the Rockafellar-Wets PLQ representation of \Cref{cor:plqstructthm}. For both $h_a$ and $h_b$, the nondegeneracy condition for $\cM$ holds since $A$ can be taken to be $I_2$.
\end{example}
\begin{example}
	In $\R^2$, the domain of $h_a$ and $h_b$ in the previous example can be presented in the following way. Take each of the four quadrants in the plane and split them along their respective diagonal. Define $h_a$ as usual on each of the pieces. Then this presentation describes $\dom{h_a}$ using 4 hyperplanes and also meets the Rockafellar-Wets PLQ representation theorem. However, the nondegeneracy condition fails for $\cM$ in this representation.
	\\
	On the manifold $\cM$ given by an ``artificial" diagonal, the matrix $A$ is comprised of a single column, with $\ncone{c}{\cM}=\Ran{A}$ for any $c\in\cM$. However, $h_a$ is smooth on $\cM$ with $\parr{\sd h(c)} = \set{0}$.
\end{example}
We end this section with a relationship between partial smoothness and the convergence analysis of quasi-Newton methods studied in \ref{subsec:qnmethods}. The following result is a finite identification property for any algorithm solving \ref{theprogram} in the presence of an active manifold at a solution.
\begin{theorem}\cite[Theorem 4.10]{lewis2016proximal}
	\label{thm:lewisactiveidentify}
	Suppose the closed, proper, convex function $h:\R^m\to\eR$ is partly smooth at the point $\bc\in\R^m$ relative to a manifold $\cM\subset\R^m$. Consider a subgradient $\by\in\ri{\sd h(\bc)}$. Suppose the sequence $\set{\hat c_k}\subset \R^m$ satisfies $\hat c_k\to\bc$ and $h(\hat c_k)\to h(\bc)$. Then, $\hat c_k\in \cM_{\bc}$ for all large $k$ if and only if $\dist{\by}{\sd h(\hat c_k)}\to0$.
\end{theorem}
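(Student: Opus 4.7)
The strategy is to exploit two distinct consequences of partial smoothness: subgradient inner semicontinuity along $\cM$ for the easy direction, and, for the hard direction, the combination of inner semicontinuity with the parallelism $\parr{\sd h(\bc)}=\ncone{\bc}{\cM}$, deployed against a nearest-point projection onto $\cM$ together with a relative-interior/convexity argument.

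For $(\Rightarrow)$: assume $\hat c_k \in \cM$ for all large $k$. Then $\hat c_k \to \bc$ within $\cM$, and subgradient inner semicontinuity of $\sd h$ at $\bc$ relative to $\cM$, applied to the point $\by \in \sd h(\bc)$, produces a sequence $z_k \in \sd h(\hat c_k)$ with $z_k \to \by$. Hence $\dist{\by}{\sd h(\hat c_k)} \le \|z_k - \by\| \to 0$. This direction uses only $\by \in \sd h(\bc)$, not $\by \in \ri{\sd h(\bc)}$.

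For $(\Leftarrow)$: I argue by contradiction. After passing to a subsequence and relabeling, suppose $\hat c_k \notin \cM$ for every $k$. From $\dist{\by}{\sd h(\hat c_k)} \to 0$, extract $y_k \in \sd h(\hat c_k)$ with $y_k \to \by$. Since $\cM$ is a $\cC^2$ submanifold, it admits a tubular neighborhood of $\bc$ on which the nearest-point projection is well-defined: let $p_k \in \cM$ be the projection of $\hat c_k$, so $p_k \to \bc$ and $\hat c_k - p_k \in \ncone{p_k}{\cM}$. Write $\hat c_k - p_k = \epsilon_k w_k$ with $\epsilon_k := \|\hat c_k - p_k\| > 0$ and $\|w_k\| = 1$, and pass to a further subsequence so that $w_k \to w$ with $\|w\| = 1$. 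Continuity of the normal subspace of $\cM$ in its basepoint gives $w \in \ncone{\bc}{\cM} = \parr{\sd h(\bc)}$. Because $\by \in \ri{\sd h(\bc)}$ and $w$ lies in the parallel subspace, I may choose $\epsilon > 0$ small enough that $\by + \epsilon w \in \sd h(\bc)$. Inner semicontinuity applied once more, to $p_k \to \bc$ in $\cM$ with subgradient $\by + \epsilon w$, yields $z_k \in \sd h(p_k)$ with $z_k \to \by + \epsilon w$. Summing the two convexity inequalities
\[
h(\hat c_k) \ge h(p_k) + \langle z_k,\, \hat c_k - p_k\rangle \quad\text{and}\quad h(p_k) \ge h(\hat c_k) + \langle y_k,\, p_k - \hat c_k\rangle
\]
yields $\langle z_k - y_k,\, \hat c_k - p_k\rangle \le 0$. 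Dividing by $\epsilon_k > 0$ and letting $k \to \infty$ delivers $\epsilon \|w\|^2 = \langle \epsilon w, w\rangle \le 0$, contradicting $\epsilon > 0$.

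The main obstacle is the geometric setup for the $(\Leftarrow)$ direction: verifying that the nearest-point projection onto $\cM$ is single-valued near $\bc$, that $p_k \to \bc$, and that limits of unit normals to $\cM$ at $p_k$ remain in $\ncone{\bc}{\cM}$. Each of these follows from the $\cC^2$ local defining function of $\cM$ via a standard tubular-neighborhood/implicit-function argument, but they must be invoked carefully. Once this tangential--normal decomposition is available, the closing convexity manipulation is a routine extraction of content from the fact that $\by$ sits in the \emph{relative} interior of $\sd h(\bc)$.
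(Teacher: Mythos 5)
Your proof is correct. Note first that the paper does not prove this statement at all: it is imported verbatim as \cite[Theorem 4.10]{lewis2016proximal}, so there is no in-paper argument to compare against, and what follows assesses your proof on its own and against the cited source. Your forward direction is the standard one: inner semicontinuity of $\sd h$ relative to $\cM$ applied to $\by\in\sd h(\bc)$, with relative interiority indeed not needed. Your reverse direction is a clean, self-contained convex argument that differs from the route behind the citation, where the result is established for prox-regular partly smooth functions via the Hare--Lewis identification machinery; you instead exploit convexity directly, testing monotonicity of $\sd h$ against the normal component $\hat c_k-p_k\in\ncone{p_k}{\cM}$ produced by the nearest-point projection, and using $\parr{\sd h(\bc)}=\ncone{\bc}{\cM}$ together with $\by\in\ri{\sd h(\bc)}$ to manufacture the competing subgradient $\by+\epsilon w$. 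Each step checks out: the tubular-neighborhood facts you flag (well-posedness of the projection near $\bc$, $p_k\to\bc$, and the fact that limits of unit normals at $p_k$ lie in $\ncone{\bc}{\cM}$, which follows from $\ncone{p}{\cM}=\Ran{\nabla F(p)^\top}$ with $\nabla F$ continuous and surjective near $\bc$) are standard for a $\cC^2$ manifold; the summed subgradient inequalities give $\ip{z_k-y_k}{\hat c_k-p_k}\le0$ by monotonicity; and dividing by $\epsilon_k$ and passing to the limit yields $\epsilon\norm{w}^2\le0$, the desired contradiction, with the subsequence bookkeeping handled correctly. Two minor remarks: in the convex setting the hypothesis $h(\hat c_k)\to h(\bc)$ is actually automatic in the reverse direction (from $y_k\in\sd h(\hat c_k)$ with $y_k\to\by$ the subgradient inequality gives $\limsup_k h(\hat c_k)\le h(\bc)$, and lower semicontinuity gives the reverse), so your not invoking it is not a gap; and the appearance of $\cM_{\bc}$ in the statement is a typo for the generic manifold $\cM$, which is how you correctly read it.
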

Combining \Cref{cor:qnMethod} and \Cref{thm:lewisactiveidentify}, we have the following relationship between the sufficient conditions for superlinear convergence of the quasi-Newton method \ref{qnsubproblems} and the finite identification of an active manifold at a solution.
\begin{corollary}
	Let $\cM_{\bc}$ be as in \eqref{eq:manifold}, let \Cref{assum:ps} hold, and recall the notation of \Cref{def:AmatrixPs} Let $\bx\in\dom{f}$ and $\bc:=c(\bx)$. 
	\\
	Suppose 
	\begin{enumerate}[label=(\alph*)]
		\item $\cM_{\bc}$ satisfies the nondegeneracy condition,
		\item the $k$-strict complementarity condition of \Cref{def:kstrictcc} holds at $(c,y)\in\R^m\times\R^m$,
		\item $M(\bx)=\set{\by}$, and
		\item the second-order sufficient conditions of \Cref{thm:plqsonsc} are satisfied at $\bx$.
	\end{enumerate}
	Consider the neighborhood $U$ of $(\bx,\by)$ of \Cref{cor:qnMethod}, and a starting point $(x^0,y^0)\in U$. Suppose the sequence $\set{(x^k,y^k)}_{k\in\bN}$ is generated from the optimality conditions for \ref{qnsubproblems}, remains in $U$ for all $k\in\bN$, and satisfies $(x^k,y^k)\ne(\bx,\by)$ for all $k\in \bN$. Then, the sufficient conditions for superlinear convergence of \Cref{cor:qnMethod} imply $c(x^k)+\nabla c(x^k)[x^{k+1}-x^k]\in \cM_{\bc}$ for all large $k$.
\end{corollary}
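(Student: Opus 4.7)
The plan is to combine the two main ingredients already available in the excerpt: the superlinear convergence result in \Cref{cor:qnMethod} and the Lewis identification theorem \Cref{thm:lewisactiveidentify}. The bridge is the observation that the dual optimality condition of the quasi-Newton subproblem \ref{qnsubproblems} delivers a subgradient of $h$ at precisely the point we are trying to identify as lying on $\cM_{\bc}$. Specifically, writing $\hat c_k := c(x^k)+\nabla c(x^k)[x^{k+1}-x^k]$ and reading off the second block row of the generalized equation satisfied by $(x^{k+1},y^{k+1})$ in \Cref{lem:qnoptimalityiffgeqqn} gives $\hat c_k \in \sd h^\star(y^{k+1})$, equivalently $y^{k+1}\in \sd h(\hat c_k)$.

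I would then verify, in order, the hypotheses of \Cref{thm:lewisactiveidentify}. First, hypotheses (a) and (b) of the corollary together with \Cref{thm:ps} give partial smoothness of $h$ at $\bc$ relative to $\cM_{\bc}$. Second, hypotheses (c) and (d) put us in the situation of \Cref{cor:qnMethod}: under the stated sufficient conditions for superlinear convergence, $(x^k,y^k)\to(\bx,\by)$ (hence $x^{k+1}\to\bx$ and $x^{k+1}-x^k\to 0$), so continuity of $c$ and $\nabla c$ yields $\hat c_k\to\bc$. Third, since $y^{k+1}\in \sd h(\hat c_k)$ forces $\hat c_k\in\dom h$, and $h$ is continuous relative to its (closed) domain by \Cref{plqformulae}, we obtain $h(\hat c_k)\to h(\bc)$. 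Fourth, $y^{k+1}\to\by$ together with $y^{k+1}\in \sd h(\hat c_k)$ yields
\[
\dist{\by}{\sd h(\hat c_k)} \le \norm{\by-y^{k+1}}\to 0.
\]
Applying \Cref{thm:lewisactiveidentify} with the subgradient $\by$ and the sequence $\hat c_k$ then gives $\hat c_k\in\cM_{\bc}$ for all large $k$, which is the desired conclusion.

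The delicate step, and the main obstacle, is justifying that $\by$ lies in $\ri\sd h(\bc)$, as required by the identification theorem. The hypothesis $M(\bx)=\set{\by}$ only places $\by$ in $\sd h(\bc)$, and $k$-strict complementarity of \Cref{def:kstrictcc} is a priori weaker than full strict complementarity in the sense of \Cref{risdiffstrict}. I would address this by invoking the mechanism used inside the proof of \Cref{pslemma}: under nondegeneracy, $k$-strict complementarity forces $\parr\cU(\bc)=\Null\cA$, so starting from the unique multiplier $\mu(\bc,\by)$ delivered by \Cref{uniquemugivenv} one can perturb $\by$ along $\bar A\,\Null\cA$ without leaving $\sd h(\bc)$; combined with $\by\in M(\bx)\subset \Null{\nabla c(\bx)^\top}$ and the uniqueness $M(\bx)=\set{\by}$, this rigidity pins the multiplier blocks to be strictly positive and places $\by$ in $\ri\sd h(\bc)$ by \Cref{risdiffstrict}. (Equivalently, one may quote a version of the identification theorem in the spirit of \cite{lewis2002active, lewis2016proximal} phrased directly in terms of the partial smoothness datum at $(\bc,\by)$, bypassing $\ri\sd h(\bc)$.)
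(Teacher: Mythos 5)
Your proof follows the same route as the paper's: extract $y^{k+1}\in\sd h(\hat c_k)$ from the subproblem optimality conditions via \Cref{lem:qnoptimalityiffgeqqn}, deduce $\hat c_k\to\bc$, $h(\hat c_k)\to h(\bc)$ (using continuity of $h$ relative to its closed domain from \Cref{plqformulae}), and $\dist{\by}{\sd h(\hat c_k)}\le\norm{\by-y^{k+1}}\to 0$, then invoke \Cref{thm:lewisactiveidentify}. This matches the paper's proof step for step, and you correctly spotlight a hypothesis-checking step the paper leaves implicit: \Cref{thm:lewisactiveidentify} requires the subgradient to lie in $\ri{\sd h(\bc)}$, and the corollary's stated hypotheses do not immediately deliver $\by\in\ri{\sd h(\bc)}$.

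Your proposed repair, however, does not close that gap. The mechanism in \Cref{pslemma} yields $\parr{\cU(\bc)}=\Null{\cA}$, hence $\parr{\sd h(\bc)}=\Ran{A}$, but perturbing $\by$ within $\sd h(\bc)$ along $\Ran{A}$ produces subgradients that have no reason to lie in $\Null{\nabla c(\bx)^\top}$. In fact they cannot: $M(\bx)=\set{\by}$ implies \eqref{eq:generaltransversality} by \Cref{lem:CQimplicationchain}, which together with $\parr{\sd h(\bc)}=\Ran{A}$ gives $\Null{\nabla c(\bx)^\top}\cap\Ran{A}=\set{0}$, so every nontrivial perturbation of $\by$ along $\Ran{A}$ instantly leaves $\Null{\nabla c(\bx)^\top}$. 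Thus uniqueness of $M(\bx)$ imposes no constraint on where $\by$ sits inside $\sd h(\bc)$ and cannot ``pin'' the multiplier blocks to be strictly positive; it is entirely consistent with $\by$ being a relative boundary point of $\sd h(\bc)$. The condition that actually delivers what you need is \eqref{eq:strictcriticality} at $\bx$ for $\by$ (exactly as assumed in \Cref{assum:mr}\ref{assumitem:mrSC}), which places $\by\in\ri{\sd h(\bc)}$ directly; absent that, the argument you (and the paper) give does not verify the hypothesis of \Cref{thm:lewisactiveidentify}.
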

\begin{proof}
	Since $x^k\to\bx,\ d^k\to 0$. By continuity, $\hat c_k:=c(x^k)+\nabla c(x^k)[x^{k+1}-x^k] \to \bc$. The quasi-Newton method \eqref{eq:geqqn} with $\mathbf{B}_k$ given by \eqref{eq:qncB=b} implies $y^{k+1}\in \sd h(\hat c_k)$, so $\set{\hat c_k}\subset\dom{h}$. By \Cref{plqformulae}, $h(\hat c_k)\to h(\bc)$. Since $y^k\to\by$,
	\(
	\dist{\by}{\sd h(\hat c_k)}\leq \norm{\by - y^{k+1}}\to0.
	\)
	Then, by partial smoothness and \Cref{thm:lewisactiveidentify}, $\hat c_k\in \cM_{\bc}$ for all large $k$.
\end{proof}
\section{Strong Metric Regularity and Local Quadratic Convergence of Newton's Method}
\label{sec:metricreg}
The point of this section is to marry the partial smoothness hypothesis to the hypotheses used to establish strong metric subregularity in \Cref{sec:partialsmoothness} to establish strong metric regularity of a solution mapping that is an appropriately defined local version of $g+G$ in \eqref{kktS}. In addition, we establish the local quadratic convergence of the Newton method for $g+G$.
\begin{definition}[Metric regularity]
	\label{def:mr}
	A set-valued mapping $S:\R^n \rightrightarrows\R^m$ is \emph{metrically regular} at $\bx$ for $\by$ when $\by\in S(\bx)$, the graph of $S$ is locally closed at $(\bx,\by)$, and there exists $\kappa\ge0$ and neighborhoods $U$ of $\bx$ and $V$ of $\by$ such that
	\(
	\dist{x}{S^{-1}(y)}\le\kappa\dist{y}{S(x)}\text{ for all }(x,y)\in U \times V.
	\)
	The infimum of $\kappa$ over all $(\kappa,\ U,\ V)$ satisfying the display is called the metric regularity modulus of $S$ at $\bx$ for $\by$, and is denoted $\mathrm{reg}(S;\bx|\by)$.
\end{definition}
\begin{definition}[Strong metric regularity]
	\label{def:strongmr}
	A set-valued mapping $S:\R^n \rightrightarrows\R^m$ is \emph{strongly metrically regular} at $\bx$ for $\by$ when it is metrically regular at $\bx$ for $\by$ and $S^{-1}$ has a single-valued localization at $\by$ for $\bx$. Equivalently, when $S^{-1}$ has a Lipschitz continuous single-valued localization around $\by$ for $\bx$.
\end{definition}
\subsection{Partly Smooth Problems}\hfill

In this section, we make the following assumptions:
\begin{assumptions}
	\label{assum:mr}
	Let $f$ be as in \ref{theprogram}, $(\bx,\by)\in\dom{f}\times\R^m,\ \bc:=c(\bx),\ \bk=|\cK(\bc)|$, where $\cK(\bc)$ are the active indices given in \Cref{def:activeidx}. Let $\cM_{\bc}$ be the active manifold defined in \eqref{eq:manifold} and let $\bmu_j\in\R^\ell$ for $j\in\set{1,\dotsc,\bk}$, where $\ell=|I_k(\bc)|$ for any $k\in \cK(\bc)$ with $I_k(\bc)$ defined in \eqref{eq:RW-Ikc}. Recall that $\ell$ is well-defined by \Cref{activepolyhedraequal}. With these specifications, we assume that 
	\begin{enumerate}[label=(\alph*)]
		\item $\dom{h}$ is given by the Rockafellar-Wets PLQ representation of \Cref{cor:plqstructthm},
		\item $c$ is $\cC^3$-smooth,
		\item $\cM_{\bc}$ satisfies the nondegeneracy condition (in particular, $\bk\ge2$), \label{assumitem:mrNonDegen}
		\item $f$ satisfies \eqref{eq:strictcriticality} at $\bx$ for $\by$; i.e.,
		$
		\Null{\nabla c(\bx)^\top}\cap\ri{\sd h(\bc)}=\set{\by}
		$
		\label{assumitem:mrSC}, so that in particular, as in \eqref{eq:plqsdsystem}, $J\by = \cQ\bc + \cB + \hat\cA \bmu$, where $\bmu = (\bmu_1^\top,\dotsc,\bmu_{\bk}^\top)^\top>0$ by \Cref{risdiffstrict},
		\item \label{assumitem:mrSOSC}
		$\bx$ satisfies the second-order sufficient conditions of \Cref{thm:plqsonsc}, i.e.,
		\[
		h''(c(\bx); \nabla c(\bx)d) + \ip{d}{\nabla^2(\by c)(\bx)d} >0\quad \forall\, d\in\Null{A^\top\nabla c(\bx)}\setminus\set{0} ,
		\]
		where, by \Cref{lem:CQimplicationchain}, $M(\bx)=\set{\by}$, and by \Cref{lem:D(x)andA^Tc'(x)}, $D(\bx)=\Null{A^\top\nabla c(\bx)}$.
	\end{enumerate}
\end{assumptions}
The conditions (c) - (e) 
in \Cref{assum:mr} can be interpreted in terms of similar assumptions employed in classical NLP. Condition (c) corresponds to the linear independence of the active constraint gradients, (d) corresponds to strict complementary slackness, and (e) corresponds to the strong second-order sufficiency condition. The convergence results developed in this section subsume those known for NLP, since they follow from the case in which $h$ is non finite-valued piecewise linear convex.
\\
We begin with a key technical lemma important for establishing metric regularity.
\begin{lemma}
	\label{qqvinranA} In the notation of \Cref{def:AmatrixPs},
	for any $i,j\in \set{1,\dotsc,\bk},\ (Q_{k_i}-Q_{k_j})\Null{A^\top}\subset \ran{A}$.
\end{lemma}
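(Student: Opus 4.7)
The plan is to exploit the fact that $h$ is single-valued on $\cM_{\bc}$, so the different local polynomial representations of $h$ attached to the various active pieces $C_{k_1},\dotsc,C_{k_{\bk}}$ must agree as functions on $\cM_{\bc}$. Concretely, set $p_i(c):=\tfrac{1}{2}\ip{c}{Q_{k_i}c}+\ip{b_{k_i}}{c}+\beta_{k_i}$ for $i\in\set{1,\dotsc,\bk}$. By \Cref{def:plq} and the definition of $\cM_{\bc}=\ri{\bigcap_{k\in \cK(\bc)}C_k}$, we have $p_i(c)=h(c)=p_j(c)$ for every $c\in\cM_{\bc}$, and hence the quadratic polynomial $q_{ij}:=p_i-p_j$ vanishes identically on $\cM_{\bc}$.

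Next, I would use \Cref{Mfacialrep}, which gives the tangent and normal space structure $\tcone{c}{\cM_{\bc}}=\Null{A^\top}$ and $\ncone{c}{\cM_{\bc}}=\Ran{A}$ throughout $\cM_{\bc}$. Since $\cM_{\bc}$ is relatively open in its affine hull, for every $\bc\in\cM_{\bc}$ there exists $\eps>0$ with $\bc+\eps w\in\cM_{\bc}$ whenever $w\in\Null{A^\top}$, $\|w\|\le1$. Vanishing of $q_{ij}$ on $\cM_{\bc}$ therefore forces the tangential derivative of $q_{ij}$ to vanish at every $c\in\cM_{\bc}$, which amounts to
\[
\nabla q_{ij}(c)=(Q_{k_i}-Q_{k_j})c+(b_{k_i}-b_{k_j})\in \Null{A^\top}^\perp=\Ran{A}.
\]

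Finally, pick any two points $c,c'\in\cM_{\bc}$ and subtract the two resulting inclusions; the affine terms cancel and I obtain $(Q_{k_i}-Q_{k_j})(c-c')\in\Ran{A}$. By the relative openness of $\cM_{\bc}$ noted above, the difference set $\cM_{\bc}-\cM_{\bc}$ spans the full tangent space $\Null{A^\top}$, so $(Q_{k_i}-Q_{k_j})\Null{A^\top}\subset\Ran{A}$, as claimed.

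I do not anticipate a real obstacle here: once one recognizes that the PLQ construction forces $p_i\equiv p_j$ on the overlapping relative interior $\cM_{\bc}$, the rest is a straightforward linear-algebraic bookkeeping exercise using the tangent/normal space identities from \Cref{Mfacialrep}. The only subtlety to verify carefully is that $\cM_{\bc}-\cM_{\bc}$ really does exhaust $\Null{A^\top}$, and this is immediate from the definition of relative interior.
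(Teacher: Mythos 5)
Your proof is correct, and it takes a genuinely different route from the paper. You argue via \emph{function values}: since $\cM_{\bc}\subset C_{k_i}\cap C_{k_j}$, the two quadratic pieces $p_i$ and $p_j$ representing $h$ must agree on $\cM_{\bc}$, so $q_{ij}:=p_i-p_j$ vanishes on the relatively open set $\cM_{\bc}$; differentiating along the tangent space $\Null{A^\top}$ (via \Cref{Mfacialrep}) forces $\nabla q_{ij}(c)=(Q_{k_i}-Q_{k_j})c+(b_{k_i}-b_{k_j})\in\Ran{A}$ at every $c\in\cM_{\bc}$, and subtracting two such inclusions kills the affine part. This is a clean and elementary argument, and it correctly uses the fact that $\cM_{\bc}-\cM_{\bc}$ spans $\Null{A^\top}$ by relative openness. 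The paper instead argues via \emph{subgradients}: it picks $v\in\sd h(\bc+tw)$ and $\bv\in\sd h(\bc)$, applies the system \eqref{eq:plqsdsystem} defining the subdifferential on $\cM_{\bc}$, and subtracts blockwise to arrive at the explicit identity \eqref{eq:twist}, namely
\[
(Q_{k_i}-Q_{k_j})w = t^{-1}A\bigl\{P_j(\mu(c_t,v)_j-\bar\mu_j)-P_i(\mu(c_t,v)_i-\bar\mu_i)\bigr\}.
\]
The distinction is not cosmetic: the paper's route is slightly heavier but it produces the concrete coefficient expression in \eqref{eq:twist}, which is reused verbatim in the proof of \Cref{thm:mrgluing} to track how the multiplier blocks transform across pieces. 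Your approach establishes the containment $(Q_{k_i}-Q_{k_j})\Null{A^\top}\subset\ran{A}$ with less machinery (no appeal to \eqref{eq:plqsdsystem} or existence of subgradients at nearby points), which would be preferable if the lemma were an isolated statement; but it does not supply the explicit decomposition needed downstream, so the paper's heavier proof earns its keep.
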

\begin{proof}
	Let $w\in\Null{A^\top}$. By polyhedrality, there exists $|t|>0$ such that $c_t:=\bc + tw \in \cM_{\bc}$. By \Cref{plqformulae}, $\dom{\sd h} = \dom{h}$, so there exists $v\in \sd h(c_t)$ and $\bv \in \sd h(\bc)$. By \eqref{eq:plqsdsystem}, $(v,\mu(c_t,v))$ and $(\bv,\bmu)$ satisfy
	\(
	Jv = \cQ c_t + \cB + \hat \cA \mu(c_t,v)\) and 
$J\bv = \cQ\bc + \cB+ \hat \cA \bar\mu.$
	Then for any $i,j\in \cK(\bc)$, 
	\begin{align*}
	0 &= (Q_{k_i}-Q_{k_j})c_t + A (P_i\mu(c_t,v)_i-P_j\mu(c_t,v)_j) + b_{k_i} - b_{k_j}, \\
	0 &= (Q_{k_i}-Q_{k_j})\bc + A(P_i\bar\mu_i-P_j\bar \mu_j) + b_{k_i} - b_{k_j}.
	\end{align*}
	Subtracting the second equation from the first and rearranging gives
	\begin{equation}
	\label{eq:twist}
	(Q_{k_i}-Q_{k_j})w = t^{-1}A\set{
		P_j(\mu(c_t,v)_j-\bar \mu_j) - P_i(\mu(c_t,v)_i - \bar\mu_i)
	}.
	\end{equation}
\end{proof}
We now define a family of local approximations to $g+G$ for which strong metric regularity is established.
\begin{definition}\label{def:g_jandG}
	For a point $\bc\in \cM_{\bc}$ and each $j\in\set{1,\dotsc,\bk}$, define $g_j:\R^{n+m+\ell}\to\R^{n+m+\ell+\ell}$.
	\[
	g_j(x,y,\mu_j) := \begin{pmatrix}
	\nabla c(x)^\top y\\
	y - Q_{k_j}c(x)-b_{k_j} - AP_j\mu_j\\
	A^\top [c(x)-\bc] \\
	-\mu_j
	\end{pmatrix},\quad G_0 := \begin{pmatrix}
	\set{0}^n \\
	\set{0}^m\\
	\set{0}^\ell\\
	\R_+^\ell.
	\end{pmatrix}
	\]
	and set $\bm\bx_j := (\bx, \by, \bmu_j)\in\R^{n+m+\ell}$, where $\bx,\by,\bmu_j$ are as in \Cref{assum:mr}. Then
	\[
	\nabla g_j(x,y,\mu_j) = \begin{pmatrix}
	\nabla^2 (y c)(x) & \nabla c(x)^\top & 0 \\
	-Q_{k_j}\nabla c(x) & I & -AP_j \\
	A^\top \nabla c(x) & 0 & 0 \\
	0 & 0 & -I_\ell
	\end{pmatrix}, \quad g_j(\bm\bx_j)=\begin{pmatrix}0\\0\\0\\-\bmu_j\end{pmatrix}\in -G_0 \text{ (see \Cref{assum:mr} \ref{assumitem:mrSC})}.
	\]
\end{definition}
In parallel to the study in \Cref{sec:subreg}, we introduce the linearization of these mappings.
\begin{definition}[$\cM_{\bc}$-restricted KKT Mappings]
	Let $\bc$ and $\bk$ be given by \Cref{assum:mr}, and $g_j$ and $G_0$ be as in \Cref{def:g_jandG}. For all $j\in\set{1,\dotsc,\bk}$, define the linearization of $g_j + G_0$ at $\bm u = (\hat x,\hat y,\hat \mu_j)$
	\begin{align}
	\label{eq:defGju}
	\cG^j_{\bm u}(\bm x) &:= g_j(\bm u) + \nabla g_j(\bm u)(\bm x - \bm u) + G_0,\text{ or equivalently,}\\
	\cG^j_{(\hat x,\hat y,\hat \mu_j)}(x,y,\mu_j) &:= g_j(\hat x,\hat y,\hat \mu_j) + \nabla g_j(\hat x,\hat y,\hat \mu_j)\begin{pmatrix}
	x - \hat x \\ y-\hat y \\ \mu_j - \hat\mu_j\
	\end{pmatrix} + G_0.\nonumber
	\end{align} 
	For any $\bm u = (\hat x,\hat y,\hat \mu_j)$, define the function
	\begin{equation}
	\label{eq:F3linear}
	F_{\bm u}(\bm x,\bm z) :=  g_j(\bm u) + \nabla g_j(\bm u)(\bm x - \bm u) - \bm z = \begin{pmatrix}
	\nabla c(\hat x)^\top y + \nabla^2(\hat yc)(\hat x)[x-\hat x] - z_1 \\
	y - Q_{k_j}[c(\hat x)+\nabla c(\hat x)[x-\hat x]] - b_{k_j} - AP_j\mu_j - z_2 \\
	A^\top[c(\hat x) + \nabla c(\hat x)[x-\hat x] - \bc] - z_3 \\
	-\mu_j - z_4
	\end{pmatrix}.
	\end{equation}
	Then, 
	\begin{equation}
	\label{eq:gphG^j_u}
	\gph{\cG^j_{\bm u}} = \bset{(\bm x, \bm z)}{F_{\bm u}(\bm x, \bm z)\in -G_0},
	\end{equation}
	with $	\dom{\cG^j_{\bm u}}=\R^{n+m+\ell}$. 
	Explicitly,
	\begin{equation}
	\label{eq:gphGj}
	\gph{\cG^j_{(\hat x,\hat y,\hat\mu_j)}} = \bset{(x,y,\mu_j,z_1,z_2,z_3,z_4)}{\begin{aligned}
		z_1 &= \nabla c(\hat x)^\top y + \nabla^2(\hat yc)(\hat x)[x-\hat x] \\
		z_2 &= y - Q_{k_j}[c(\hat x)+\nabla c(\hat x)[x-\hat x]] - b_{k_j} - AP_j\mu_j  \\
		z_3 &= A^\top[c(\hat x) + \nabla c(\hat x)[x-\hat x] - \bc] \\
		z_4 &\in -\mu_j + \R_+^\ell
		\end{aligned}}.
	\end{equation}
\end{definition}
The next lemma shows that the error in the Newton iterates can be measured in terms of $(x,y)$ alone, independent of the vectors $\mu_j$.
\begin{lemma}
	Let $\bx,\by,\bmu,\bc,\bk$, and $\cQ$ be as in \Cref{assum:mr}, and $g_j$ and $G_0$ be as in \Cref{def:g_jandG}. For any $j\in\set{1,\dots,\bk}$, define $\eta_j:\R^n\times\R^m\to\R^{n+m+\ell}$ by
	\begin{equation}\label{eq:etadef}
	\eta_j(x,y) := \begin{pmatrix}
	\nabla c(x)^\top y\\
	Q_{k_j}(\bc - c(x))\\
	A^\top(c(x)-\bc)
	\end{pmatrix}.
	\end{equation}
	Observe that for any $(x,y,\mu_j)\in\R^n\times\R^m\times\R^\ell$,
	{\small
		\[
		g_j(x,y,\mu_j) = \begin{pmatrix}
		\eta_j(x,y) \\ 0
		\end{pmatrix} + \begin{pmatrix}
		0\\y-\by + AP_j(\bmu_j-\mu_j)\\0\\-\mu_j
		\end{pmatrix}\mbox{ and }\nabla g_j(x,y,\mu_j) = \begin{pmatrix}
		\nabla\eta_j(x,y) & 0 \\0&0
		\end{pmatrix} + \begin{pmatrix}
		0 & 0 & 0\\
		0 & I & -AP_j \\
		0 & 0 & 0 \\
		0 & 0 & -I
		\end{pmatrix}
		\]
	}
	Set $\bm\bx_j:=(\bx,\by,\bmu_j)$. Then, for any $\bm u := (\hx,\hy,\hmu_j)\in\R^n\times\R^m\times\R^\ell$,
	\begin{equation}
	\label{eq:linearizationnomudep}
	\norm{F_{\bm u}(\bm\bx_j, g_j(\bm\bx_j))} = \norm{\eta_j(\hat x,\hat y) + \nabla \eta_j(\hat x,\hat y)\begin{pmatrix}
		\bx - \hat x \\ \by - \hat y
		\end{pmatrix} - \eta_j(\bx, \by)},
	\end{equation}
	since $\eta_j(\bx,\by)=0$.
\end{lemma}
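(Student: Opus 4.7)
The plan is to verify the identity by direct substitution, exploiting the two block decompositions of $g_j(\bm u)$ and $\nabla g_j(\bm u)$ supplied in the statement. First I would check that $\eta_j(\bx,\by)=0$: the strict criticality condition in \Cref{assum:mr}(d) gives $\by\in M(\bx)$, so $\nabla c(\bx)^\top\by=0$, while the other two blocks $Q_{k_j}(\bc-c(\bx))$ and $A^\top(c(\bx)-\bc)$ vanish from $c(\bx)=\bc$. Consequently the claim reduces to establishing
\[
F_{\bm u}(\bm\bx_j,g_j(\bm\bx_j)) \;=\; \begin{pmatrix} \eta_j(\hx,\hy) + \nabla\eta_j(\hx,\hy)(\bx-\hx,\by-\hy) \\ 0 \end{pmatrix},
\]
after which the stated norm equality, with the redundant $-\eta_j(\bx,\by)$ written in, follows immediately.

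Next I would substitute $\bm x=\bm\bx_j$ and $\bm z=g_j(\bm\bx_j)$ into $F_{\bm u}(\bm x,\bm z)=g_j(\bm u)+\nabla g_j(\bm u)(\bm x-\bm u)-\bm z$ and split both $g_j(\bm u)$ and $\nabla g_j(\bm u)$ into their $\eta$-part and residual $\mu_j$-part. The $\eta$-contributions assemble in the first $n+m+\ell$ coordinates into $\eta_j(\hx,\hy)+\nabla\eta_j(\hx,\hy)(\bx-\hx,\by-\hy)-\eta_j(\bx,\by)$, which is the target. It remains to show every $\mu_j$-dependent residual cancels. For the second block, the piece $\hy-\by+AP_j(\bmu_j-\hmu_j)$ from the residual part of $g_j(\bm u)$ is annihilated by $(\by-\hy)-AP_j(\bmu_j-\hmu_j)$, the residual action of $\nabla g_j(\bm u)$ on $(\bx-\hx,\by-\hy,\bmu_j-\hmu_j)$, while the second block of $g_j(\bm\bx_j)$ is itself $0$ because $\by=Q_{k_j}\bc+b_{k_j}+AP_j\bmu_j$ by \eqref{eq:plqsdsystem} together with \Cref{assum:mr}(d). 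For the fourth block, $-\hmu_j+[-(\bmu_j-\hmu_j)]-(-\bmu_j)=0$. The first and third blocks carry no residual terms, so nothing further is required there.

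The only non-mechanical ingredient is the verification that $\eta_j(\bx,\by)=0$, which draws on strict criticality in an essential way; the remainder is careful block-matrix bookkeeping. The significance of the identity is that it isolates the Newton-type residual at the target $\bm\bx_j$ onto the two-variable mapping $\eta_j$, decoupling it entirely from the multiplier coordinate $\mu_j$. This decoupling is exactly what will allow the subsequent metric-regularity analysis of $\cG^j_{\bm u}$ to be conducted in the $(x,y)$ variables alone.
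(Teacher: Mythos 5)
Your proof is correct and follows essentially the only available route: the paper itself provides no explicit proof, labeling the decomposition an observation, and your block-by-block verification supplies exactly the missing details. You correctly isolate the two non-trivial ingredients — $\eta_j(\bx,\by)=0$ (from $\nabla c(\bx)^\top\by=0$ and $c(\bx)=\bc$) and the identity $\by=Q_{k_j}\bc+b_{k_j}+AP_j\bmu_j$ from \Cref{assum:mr}\ref{assumitem:mrSC}, which is needed both to validate the displayed decomposition of $g_j$ itself and to make the second block of $g_j(\bm\bx_j)$ vanish — and the remaining cancellation of the $\mu_j$-residuals is checked correctly.
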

The following lemma uses the strict criticality assumption to show the normal cone to the graph of these linearization are captured by the range of $\nabla F_{\bm\bx_j}$.
\begin{lemma}
	\label{nconecqlemma}
	Let $\bx,\by,\bmu,\bc,\bk$, and $\cQ$ be as in \Cref{assum:mr} and set $\bm\bx_j:=(\bx,\by,\bmu_j)$.
	Then, for all $j\in \set{1,\dotsc,\bk}$, the mapping $\cG^j_{\bm\bx_j}$ in  \eqref{eq:gphG^j_u} has $\ncone{(\bm\bx_j,\bm0)}{\gph{\cG^j_{\bm\bx_j}}}=\ran{W}$, where
	\begin{equation}
	\label{eq:MmatrixMR}
	W :=
	\begin{pmatrix}
	\nabla^2(\by c)(\bx) & -\nabla c(\bx)^\top Q_{k_j} & \nabla c(\bx)^\top A\\
	\nabla c(\bx) & I_m & 0\\
	0 & -P_jA^\top & 0\\
	-I_n & 0 & 0 \\
	0 & -I_m & 0 \\
	0 & 0 & -I_\ell\\
	0 & 0 & 0
	\end{pmatrix}.
	\end{equation}
\end{lemma}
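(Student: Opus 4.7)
The plan is to reduce the normal cone computation to a range calculation for a Jacobian of equality constraints, exploiting that the inequality part of $G_0$ is inactive at $(\bm\bx_j,\bm 0)$.

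First, I would unpack the description of $\gph{\cG^j_{\bm\bx_j}}$ from \eqref{eq:gphGj}. It is cut out by three affine equalities (one for each of $z_1,z_2,z_3$) together with the inequality $z_4+\mu_j\ge 0$ (componentwise). At the base point $(\bm\bx_j,\bm 0)$ we have $z_4=0$ and $\mu_j=\bmu_j$. By \Cref{assum:mr}\ref{assumitem:mrSC}, strict criticality combined with \Cref{risdiffstrict} forces $\bmu_j>0$. Hence the inequality $z_4+\mu_j\ge 0$ holds strictly at $(\bm\bx_j,\bm 0)$, so there is a neighborhood of this point on which $\gph{\cG^j_{\bm\bx_j}}$ is described purely by the three affine equalities in \eqref{eq:gphGj}.

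Next, I would package those three equalities into a single affine map
\[
E(x,y,\mu_j,z_1,z_2,z_3)
:=\begin{pmatrix}
\nabla c(\bx)^{\!\top}\!y+\nabla^2(\by c)(\bx)[x-\bx]-z_1\\
y-Q_{k_j}[c(\bx)+\nabla c(\bx)[x-\bx]]-b_{k_j}-AP_j\mu_j-z_2\\
A^{\!\top}[c(\bx)+\nabla c(\bx)[x-\bx]-\bc]-z_3
\end{pmatrix},
\]
so that locally $\gph{\cG^j_{\bm\bx_j}}=\{E=0\}\times\R^\ell_{z_4}$. The Jacobian $\nabla E$ in the variables $(x,y,\mu_j,z_1,z_2,z_3,z_4)$ has $(-I_n,-I_m,-I_\ell)$ block structure in the $z_1,z_2,z_3$ columns (and zero in the $z_4$ columns), so $\nabla E$ has full row rank. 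Thus the standard normal cone formula for a smooth equality-constrained manifold gives
\[
\ncone{(\bm\bx_j,\bm 0)}{\gph{\cG^j_{\bm\bx_j}}}=\ran{\nabla E^{\!\top}}.
\]

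Finally, I would compute $\nabla E^{\!\top}\lambda$ for $\lambda=(\lambda_1,\lambda_2,\lambda_3)\in\R^n\times\R^m\times\R^\ell$ and verify entry-by-entry that $\nabla E^{\!\top}\lambda = W\lambda$, where $W$ is the matrix in \eqref{eq:MmatrixMR}; in particular the last block row of $W$ is zero, matching that $z_4$ does not enter $E$. This identification makes $\ran{\nabla E^{\!\top}}=\ran W$, completing the proof. None of the steps involves any real obstacle; the only thing one must check carefully is that the inequality in $G_0$ is strictly satisfied so that it contributes nothing to the normal cone, which is precisely where strict criticality (Assumption \ref{assum:mr}\ref{assumitem:mrSC}) is used.
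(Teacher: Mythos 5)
Your proof is correct and follows essentially the same route as the paper's. The paper also exploits $\bmu_j>0$ to get $\ncone{F_{\bm\bx_j}(\bm\bx_j,\bm 0)}{-G_0}=\R^{n+m+\ell}\times\set{0}^\ell$, writes $\nabla F_{\bm\bx_j}(\bm\bx_j,\bm 0)^\top=(W\,|\,R)$, and invokes the Rockafellar--Wets normal-cone-under-constraints result (\Cref{thm:cq}) whose constraint qualification \eqref{eq:rtrCQ} reduces to $\Null{W}=\set{0}$, which holds because of the identity blocks; your version simply specializes that theorem to the slack-inequality case by observing that locally the graph is an affine submanifold cut out by $E=0$ with $\nabla E$ of full row rank, so the two arguments are the same modulo which general normal-cone formula is cited.
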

\begin{proof}
	The set $\gph{\cG^j_{\bm\bx_j}} = \bset{(\bm x, \bm z)}{F_{\bm\bx_j}(\bm x, \bm z)\in -G}$ defined in \eqref{eq:gphG^j_u} is closed with $(\bm\bx_j,\bm 0)\in \gph{\cG^j_{\bm\bx_j}}$. In addition, $\bmu_j>0$, $\ncone{F_{\bm\bx_j}(\bm\bx_j,\bm 0)}{-G_0} = \R^{n+m+\ell}\times\set{0}^\ell$, and 
	\[
	\nabla F_{\bm\bx_j}(\bm\bx_j,\bm0)^\top = 
	\begin{pmatrix} 
	\nabla^2(\by c)(\bx) & -\nabla c(\bx)^\top Q_{k_j} & \nabla c(\bx)^\top A & 0\\
	\nabla c(\bx) & I_m & 0 & 0\\
	0 & -P_jA^\top & 0 & I_\ell\\
	-I_n & 0 & 0 & 0\\
	0 & -I_m & 0 & 0 \\
	0 & 0 & -I_\ell& 0\\
	0 & 0 & 0 & I_\ell\\ 
	\end{pmatrix} = \begin{pmatrix}
	W\,|\,R
	\end{pmatrix},
	\] 
	where the matrix $R$ is being defined by this expression. Combining the facts in the previous two sentences, the constraint qualification \eqref{eq:rtrCQ} in \Cref{thm:cq} (see appendix), for $\ncone{(\bm\bx_j,\bm0)}{\gph{\cG^j_{\bm\bx_j}}}$ is the requirement that $\Null{W}=\set{0}$. If we verify $\Null{W}=\set{0}$, then $\ncone{(\bm\bx_j,\bm0)}{\gph{\cG^j_{\bm\bx_j}}}=\ran{W}$ by \Cref{thm:cq}. But the presence of the identity matrices in $W$ immediately give $\Null{W}=\set{0}$.
\end{proof}
The metric regularity of the mappings $g_j+ G_0$ follow from the second-order sufficient conditions of \Cref{thm:plqsonsc}.
\begin{lemma}
	\label{newtonmr}
	Let $\bx,\by,\bmu,\bc,\bk$, and $\cQ$ be as in \Cref{assum:mr}, $W$ as in \eqref{eq:MmatrixMR} and set $\bm\bx_j:=(\bx,\by,\bmu_j)$.
	For all $j\in \set{1,\dotsc,\bk}$, 
	\[
	(\bm0, -\bm z) \in \ncone{(\bm\bx_j,\bm0)}{\gph{\cG^j_{\bm\bx_j}}}\ \Longleftrightarrow\ \bm z = 0,
	\]	
	where $\cG^j_{\bm\bx_j}$ is given by \eqref{eq:gphG^j_u}. Then, $\cG^j_{\bm\bx_j}$ is metrically regular at $\bm\bx_j$ for $\bm0$ and
	\[
	\begin{pmatrix}
	\nabla^2(\by c)(\bx) & \nabla c(\bx)^\top & 0 \\
	-Q_{k_j}\nabla c(\bx) & I_m & -AP_j \\
	A^\top\nabla c(\bx) & 0 & 0
	\end{pmatrix}
	\]
	is nonsingular.
\end{lemma}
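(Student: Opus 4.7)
The plan is to combine Lemma~\ref{nconecqlemma}, which gives $\ncone{(\bm\bx_j,\bm 0)}{\gph{\cG^j_{\bm\bx_j}}}=\Ran{W}$, with the Mordukhovich coderivative criterion for metric regularity (cf.\ \cite{dontchev2014implicit}). Since $\cG^j_{\bm\bx_j}$ has a closed graph, metric regularity at $\bm\bx_j$ for $\bm 0$ is equivalent to the implication $(\bm 0,-\bm z)\in\ncone{(\bm\bx_j,\bm 0)}{\gph{\cG^j_{\bm\bx_j}}}\Rightarrow \bm z=0$, which is exactly what the lemma displays. By Lemma~\ref{nconecqlemma}, this reduces to showing that the only $v=(v_1,v_2,v_3)\in\R^n\times\R^m\times\R^\ell$ for which $Wv$ has vanishing first three row-blocks is $v=0$.

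Reading off the block structure of $W$ in \eqref{eq:MmatrixMR}, the vanishing of the first three blocks of $Wv$ is the system
\[
\nabla^2(\by c)(\bx)v_1 - \nabla c(\bx)^\top Q_{k_j}v_2 + \nabla c(\bx)^\top A v_3 = 0,\quad \nabla c(\bx)v_1 + v_2 = 0,\quad -P_j A^\top v_2 = 0.
\]
The third equation together with invertibility of $P_j$ and the second give $v_2=-\nabla c(\bx)v_1$ with $v_1\in\Null{A^\top\nabla c(\bx)}$. Substituting into the first and taking the inner product with $v_1$, the term $\langle v_1,\nabla c(\bx)^\top A v_3\rangle = \langle A^\top\nabla c(\bx)v_1,v_3\rangle$ drops out, yielding
\[
v_1^\top\nabla^2(\by c)(\bx)v_1 + (\nabla c(\bx)v_1)^\top Q_{k_j}(\nabla c(\bx)v_1) = 0.
\]
Since $v_1\in\Null{A^\top\nabla c(\bx)}$, we have $\nabla c(\bx)v_1\in\Null{A^\top}=\tcone{\bc}{\cM_{\bc}}\subset\tcone{\bc}{C_{k_j}}$ by Lemma~\ref{Mfacialrep}, so \eqref{eq:plq2ndnonneg} identifies the second summand with $h''(c(\bx);\nabla c(\bx)v_1)$. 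Now Lemma~\ref{lem:D(x)andA^Tc'(x)} says $D(\bx)=\Null{A^\top\nabla c(\bx)}$, and the second-order sufficient condition Assumption~\ref{assum:mr}\ref{assumitem:mrSOSC} forces $v_1=0$, hence $v_2=0$. The first equation then reduces to $\nabla c(\bx)^\top A v_3=0$, and Remark~\ref{remark:nondegenandtransversality} (nondegeneracy together with the transversality consequence of strict criticality) gives $\Null{\nabla c(\bx)^\top A}=\{0\}$, so $v_3=0$. Consequently every component of $\bm z$ vanishes, proving the displayed implication and therefore the metric regularity claim.

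For the nonsingularity of the displayed $3\times 3$ block matrix $K_j$, I would rerun essentially the same calculation on $\Null{K_j}$: if $K_j(v_1,v_2,v_3)=0$, the third block row again gives $v_1\in\Null{A^\top\nabla c(\bx)}$; the second now gives $v_2 = Q_{k_j}\nabla c(\bx)v_1 + AP_j v_3$; substituting into the first and pairing with $v_1$, the cross term with $AP_j v_3$ vanishes against $v_1\in\Null{A^\top\nabla c(\bx)}$, leaving the same quadratic identity, whence $v_1=0$ by Assumption~\ref{assum:mr}\ref{assumitem:mrSOSC} and \Cref{qqvinranA}. Then $v_2=AP_jv_3$ and the first equation gives $\nabla c(\bx)^\top AP_jv_3=0$, so $v_3=0$ by Remark~\ref{remark:nondegenandtransversality} and invertibility of $P_j$, hence $v_2=0$.

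The main obstacle I anticipate is the bookkeeping step where the quadratic term $(\nabla c(\bx)v_1)^\top Q_{k_j}(\nabla c(\bx)v_1)$ must be correctly identified with $h''(c(\bx);\nabla c(\bx)v_1)$: this is the only step that mixes the polyhedral stratification with the matrix realization, and it crucially uses that on $\Null{A^\top}$ all the matrices $Q_{k_j}$ define the same quadratic form (Lemma~\ref{qqvinranA}) together with the tangent-cone identification from Lemma~\ref{Mfacialrep}. Beyond this the proof is essentially linear algebra combined with the coderivative criterion.
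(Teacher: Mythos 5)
Your proposal is correct and follows essentially the same route as the paper: express the normal cone as $\Ran{W}$ via Lemma~\ref{nconecqlemma}, read off the three block equations, take an inner product with the primal block, observe the $A$-cross-term vanishes, and invoke Assumption~\ref{assum:mr}\ref{assumitem:mrSOSC} together with $\Null{\nabla c(\bx)^\top A}=\{0\}$ to force everything to zero; then apply the coderivative criterion. One small economy you missed: the separate calculation on $\Null{K_j}$ for the nonsingularity claim is redundant, since the first calculation already shows that the matrix formed from the top three block rows of $W$ has trivial null space, and the displayed $3\times3$ block matrix is exactly its transpose (using symmetry of $\nabla^2(\by c)(\bx)$ and $Q_{k_j}$, and $P_j$ diagonal), so nonsingularity follows immediately.
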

\begin{proof}
	By \Cref{nconecqlemma}, $\ncone{(\bm\bx_j,\bm0)}{\gph{\cG^j_{\bm\bx_j}}}=\ran{W}$, and so the statement
	\[
	(\bm0, -\bm z) \in \ncone{(\bm\bx_j,\bm0)}{\gph{\cG^j_{\bm\bx_j}}}\ \Longleftrightarrow\  \bm z = 0
	\]
	is equivalent to
	{\small
		\begin{equation}\label{eq:coderivmonster}
		\left[\begin{pmatrix}
		0\\0\\0\\-z_1\\-z_2\\-z_3\\-z_4
		\end{pmatrix} = \begin{pmatrix}
		\nabla^2(\by c)(\bx) & -\nabla c(\bx)^\top Q_{k_j} & \nabla c(\bx)^\top A\\
		\nabla c(\bx) & I & 0\\
		0 & -P_jA^\top & 0\\
		-I_n & 0 & 0 \\
		0 & -I_m & 0 \\
		0 & 0 & -I_\ell\\
		0 & 0 & 0
		\end{pmatrix}\begin{pmatrix} d\\v\\w\end{pmatrix}\text{ for some }
		\begin{pmatrix} d\\v\\w\end{pmatrix}\right]
		\Longleftrightarrow \begin{pmatrix}z_1\\z_2\\z_3\\z_4\\\end{pmatrix}=0.
		\end{equation}}
	Since $(\Leftarrow)$ is trivial, we only establish $(\Rightarrow)$.
	Define $H:=\nabla^2(\by c)(\bx)$. Then the left-hand side of \eqref{eq:coderivmonster} becomes
	\begin{align}
	\label{eq:mrnconecq1}
	0 &= Hd - \nabla 
	c(\bx)^\top Q_{k_j} v + \nabla c(\bx)^\top Aw,\\
	\label{eq:mrnconecq2}
	0 &= \nabla c(\bx) d + v, \\
	\label{eq:mrnconecq3}
	0 &= -P_jA^\top v,\\
	z_1 &= d,\  
	z_2 = v,\ 
	z_3 = w,\ 
	z_4 = 0.\nonumber
	\end{align}
	Since $z_4=0$, we need only show $z_1=z_2=z_3=0$, which we establish by showing $d=v=w=0$. First suppose $d\neq0$. From \eqref{eq:mrnconecq3} and \Cref{def:AmatrixPs}, $v\in \Null{A^\top}$. Then \eqref{eq:mrnconecq2} and gives $\nabla c(\bx)d = -v \in \Null{A^\top}$. By \Cref{lem:D(x)andA^Tc'(x)}, $d\in D(\bx)\setminus\set{0}$. Taking the inner product on both sides of \eqref{eq:mrnconecq1} with $d$ and using \eqref{eq:mrnconecq2} gives $d^\top H d = d^\top \nabla c(\bx)^\top Q_{k_j} v = -d^\top \nabla c(\bx)^\top Q_{k_j}\nabla c(\bx)d,$ so
	\[
	d^\top \nabla c(\bx)^\top Q_{k_j} \nabla c(\bx) d + d^\top H d  = 0.
	\]
	But the second-order sufficient conditions of \Cref{thm:plqsonsc} imply that for any $j\in \set{1,\dotsc,\bk}$,
	\begin{align*}
	d^\top \nabla c(\bx)^\top Q_{k_j} \nabla c(\bx) d + d^\top H d> 0.
	\end{align*}
	This contradiction implies $d=0$. But then $v=0$ by \eqref{eq:mrnconecq2}. Finally, \eqref{eq:mrnconecq1} states that $w$ must satisfy
	\(
	Aw\in \Null{\nabla c(\bx)^\top}\cap \Ran{A} = \set{0}.
	\)
	By the nondegeneracy condition of \Cref{nondegeneracy}, $w=0$. Equation \eqref{eq:gphG^j_u} gives local closedness of $\cG_{\bm\bx_j}^j$ at $(\bm\bx_j,\bm0)$, so the coderivative criterion for metric regularity \cite[Theorem 4C.2]{dontchev2014implicit} implies $\cG^j_{\bm\bx_j}$ is metrically regular at $\bm\bx_j$ for $\bm 0$, as required.
\end{proof}
The metric regularity of the mappings $\cG^j_{\bm\bx_j}$ imply a parameterized uniform version of metric regularity, where we allow $\bm\bx_j$ to move.
\begin{lemma}
	\label{strongmr}
	Let $\bx,\by,\bmu,\bc,\bk$, and $\cQ$ be as in \Cref{assum:mr}, set $\bm\bx_j:=(\bx,\by,\bmu_j)$, and let $\cG^j_{\bm\bx_j}$ be given by \eqref{eq:gphG^j_u}.
	For all $j\in\set{1,\dotsc,\bk}$, there exists a neighborhood $U_j\subset \R^{n+m+\ell}$ of $\bm\bx_j$ and a neighborhood $V_j\subset\R^{n+m+\ell+\ell}$ of $\bm 0$ such that the mapping 
	\[
	(\bm u,\bm z) \mapsto \cG^{-j}_{\bm u}(\bm z):= \left(\cG^j_{\bm u}\right)^{-1}\!\!\!\!\!(\bm z)\text{ for }(\bm u,\bm z)\in U_j\times V_j 
	\]
	is single-valued with $\cG^{-j}_{\bm u}(\bm 0)\in U_j$.
\end{lemma}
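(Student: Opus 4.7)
The strategy is to exploit the block structure of $\cG^j_{\bm u}$ together with the strict positivity $\bmu_j > 0$ guaranteed by Assumption~\ref{assum:mr}\ref{assumitem:mrSC}. Writing $\bm u = (\hat x, \hat y, \hat\mu_j)$ and $\bm x = (x, y, \mu_j)$, the explicit description \eqref{eq:gphGj} shows that $\bm x \in \cG^{-j}_{\bm u}(\bm z)$ is equivalent to the first three block equations with right-hand sides $z_1, z_2, z_3$ (which are linear in $(x, y, \mu_j)$ and depend on $\bm u$ only through $(\hat x, \hat y)$) together with the componentwise inequality $\mu_j + z_4 \ge 0$. The coefficient matrix $M(\hat x, \hat y)$ of this linear system is precisely the $3\times 3$ block matrix displayed at the end of Lemma~\ref{newtonmr}, and it depends continuously on $(\hat x, \hat y)$ because $c$ is $\cC^3$ (Assumption~\ref{assum:mr}(b)).

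First I would invoke Lemma~\ref{newtonmr} to conclude that $M(\bx, \by)$ is nonsingular, and then by continuity choose a neighborhood $U_j^0$ of $\bm\bx_j$ on which $M(\hat x, \hat y)$ remains nonsingular. Cramer's rule (equivalently the classical implicit function theorem) then solves the first three equations uniquely on $U_j^0$, yielding a continuous map $\Phi(\hat x, \hat y, z_1, z_2, z_3) = (x, y, \mu_j)$ with $\Phi(\bx, \by, 0, 0, 0) = (\bx, \by, \bmu_j)$. Because $\bmu_j > 0$ componentwise by Assumption~\ref{assum:mr}\ref{assumitem:mrSC} and $\Phi$ is continuous, I can shrink to a product neighborhood $U_j \times V_j$ of $(\bm\bx_j, \bm 0)$ on which the following three properties hold simultaneously: (i) $M(\hat x,\hat y)$ is nonsingular, so $\Phi$ is well-defined; (ii) the $\mu_j$-component of $\Phi$ strictly exceeds $-z_4$ componentwise, so that the inequality constraint $\mu_j + z_4 \ge 0$ in the definition of $\cG^{-j}_{\bm u}(\bm z)$ is automatically and strictly satisfied; and (iii) $\Phi(\hat x, \hat y, z_1, z_2, z_3) \in U_j$.

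With these choices, $\cG^{-j}_{\bm u}(\bm z) = \{\Phi(\hat x, \hat y, z_1, z_2, z_3)\}$ is a singleton for every $(\bm u, \bm z) \in U_j \times V_j$, and in particular $\cG^{-j}_{\bm u}(\bm 0) \in U_j$, which is exactly what the lemma asserts. I do not anticipate a substantive obstacle here: the argument is essentially a continuity-plus-implicit-function unpacking of \eqref{eq:gphGj} built on top of Lemma~\ref{newtonmr}, with the strict positivity $\bmu_j > 0$ doing the key work of turning the inequality block of $G_0$ into a vacuous constraint locally. The only mildly delicate point is ordering the neighborhood refinements so that the strict inequality $\mu_j + z_4 > 0$ and the self-map condition $\Phi \in U_j$ can be enforced at once, which is accomplished by successively shrinking the radii of $U_j$ and $V_j$.
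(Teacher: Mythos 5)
Your single-valuedness argument takes a genuinely different and in fact more elementary route than the paper's. The paper first invokes the uniform metric-regularity theorem of Dontchev--Rockafellar to get nonemptiness of $\cG^{-j}_{\bm u}(\bm z)$ and then runs a separate subtraction argument (using \Cref{posdeflocally} and nondegeneracy) for uniqueness; you instead read off from \eqref{eq:gphGj} that $\cG^{-j}_{\bm u}(\bm z)$ is the solution set of an affine system whose coefficient matrix is the matrix of \Cref{newtonmr} evaluated at $(\hat x,\hat y)$, intersected with the condition $\mu_j+z_4\ge0$. Nonsingularity at $(\bx,\by)$ from \Cref{newtonmr}, continuity of the coefficient matrix, and $\bmu_j>0$ then give existence and uniqueness in one stroke. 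That part is correct and exploits the polyhedral structure more directly than the paper does: locally, $\cG^j_{\bm u}$ is an invertible affine map because the inequality block of $G_0$ is inactive near $\bmu_j>0$.

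The gap is in your step (iii), the self-map property $\cG^{-j}_{\bm u}(\bm 0)\in U_j$. Continuity of $\Phi$ together with $\Phi(\bm\bx_j,\bm 0)=\bm\bx_j$ does \emph{not} produce a neighborhood $U_j$ with $\Phi(U_j\times\set{\bm 0})\subset U_j$: shrinking $U_j$ shrinks the target exactly as fast as the domain, and a continuous map fixing $\bm\bx_j$ need not map any ball about $\bm\bx_j$ into itself (already a Lipschitz map with constant exceeding $1$, such as $u\mapsto 2u$ at $0$, defeats every ball). So ``successively shrinking the radii'' cannot close this step. What is actually needed --- and what the paper supplies via the quadratic bound lemma together with \eqref{eq:linearizationnomudep} --- is the quantitative estimate $\norm{\Phi(\bm u,\bm 0)-\bm\bx_j}\le\gamma\norm{\bm u-\bm\bx_j}^2$, which holds because $\bm\bx_j$ is an exact zero of $g_j+G_0$, so its residual in the Newton linearization at $\bm u$ is second order in $\norm{\bm u-\bm\bx_j}$; taking the radius of $U_j$ below $1/\gamma$ then forces $\Phi(\bm u,\bm 0)\in U_j$. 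This fix is available inside your framework --- substitute $(\bx,\by,\bmu_j)$ into the affine system at $\bm u$, observe by Taylor expansion that the residual is $O(\norm{(\hat x,\hat y)-(\bx,\by)}^2)$, and apply a uniform bound on the inverse of the coefficient matrix --- but some such second-order (or at least contraction) argument is indispensable, and mere continuity is not a substitute.
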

\begin{proof}
	Fix $j\in \set{1,\dotsc,\bk}$. By \Cref{newtonmr} and  \cite[Theorem 6D.1]{dontchev2014implicit}, for every $\lambda > \mathrm{reg}(\cG^j_{\bm\bx_j};\bm{\bx}_j|\bm{0})$ there exists $a>0$ and $b>0$ such that
	\begin{equation}
	\label{eq:3mr}
	\dist{\bm{x}}{\cG^{-j}_{\bm{u}}(\bm{z})} \le \lambda \dist{\bm{z}}{\cG^j_{\bm{u}}(\bm{x})},\quad \text{for every }\bm u, \bm x \in \bm\bx_j + a\bB, \bm z\in b\bB.
	\end{equation}
	By reducing $a$, if necessary, we may assume the conclusion of \Cref{posdeflocally} holds on $\bm\bx_j+a\bB$. We follow the argument given in \cite[Theorem 6D.2]{dontchev2014implicit} by recalling \eqref{eq:etadef} and choosing
	\[
	L > \mathrm{lip}(\nabla \eta_j;(\bx,\by)):=\limsup_{\substack{(x,y),(x',y')\to(\bx,\by)\\(x,y)\ne(x',y')}} \frac{\norm{\nabla\eta_j(x,y)-\nabla\eta_j(x',y')}}{\norm{(x,y)-(x',y')}}, \text{ and }\gamma > \frac{1}{2}\lambda L.
	\] Define $\bar a := \min\set{\frac{1}{\gamma},a}>0,\ U_j := \bm\bx_j + \bar a\bB$, and $V_j := b\bB$.
	We first establish nonemptiness of $\cG^{-j}_{\bm u}(\bm z)$. Fix $\bm x = \bm\bx_j$, and choose any $(\bm u, \bm z)\in U_j\times V_j$, and consider two cases in \eqref{eq:3mr}. If $\dist{\bm z}{\cG^j_{\bm u}(\bm\bx_j)}=0$, then by closedness of the set $\cG^j_{\bm u}(\bm\bx_j)$, it follows that $\bm\bx_j \in \cG^{-j}_{\bm u}(\bm z)$. On the other hand, if $0<\dist{\bm z}{\cG^j_{\bm u}(\bm\bx_j)}<\infty$, where finiteness is guaranteed because $\dom{\cG^j_{\bm u}}=\R^{m+n+\ell}$. Then the implication
	\[
	\dist{\bm\bx_j}{\cG^{-j}_{\bm u}(z)} \le \lambda \dist{\bm z}{\cG^j_{\bm u}(\bm\bx_j)}\Longrightarrow \dist{\bm\bx_j}{\cG^{-j}_{\bm u}(z)} < \infty
	\]
	holds, so in both cases $\cG^{-j}_{\bm u}(z)\neq\emptyset$. 
	\\
	We now show single-valuedness. For the same $j,\ \bm u$, and $\bm z$, write $\bm{u}=(\hat x,\hat y,\hat \mu_j)$, and suppose there are two points $\bm x_1 = (x_1,y_1,\mu_{j_1}),\ \bm x_2=(x_2,y_2,\mu_{j_2})$ satisfying $\bm x_1, \bm x_2\in \cG^{-j}_{\bm u}(\bm z)$. Then subtracting the equations in \eqref{eq:gphGj} gives
	\begin{align}
	0 &= \nabla^2(\hat yc)(\hat x)[x_2-x_1] + \nabla c(\hat x)^\top (y_2-y_1)\label{eq:thm6d1IP1}\\
	y_2-y_1 &= Q_{k_j}\nabla c(\hat x)[x_2-x_1] + AP_j(\mu_{j_2}-\mu_{j_1}) \label{eq:SinglevaluedgphGj} \\
	0 &= A^\top \nabla c(\hat x)[x_2-x_1]\label{eq:thm6d1IP3}.
	\end{align}
	Then $\nabla c(\hat x)[x_2-x_1] \in \Null{A^\top}$. Suppose $x_2 \neq x_1$. Taking the inner product on both sides of \eqref{eq:thm6d1IP1} and using the choice of $\bar a$ in accordance with \Cref{posdeflocally},
	\begin{align*}
	0&= [x_2-x_1]^\top \nabla^2(\hat yc)(\hat x)[x_2-x_1] + [x_2-x_1]^\top \nabla c(\hat x)^\top (y_2-y_1) && \text{ by }\eqref{eq:thm6d1IP1} \\
	&=	[x_2-x_1]^\top \nabla^2(\hat yc)(\hat x)[x_2-x_1] + [x_2-x_1]^\top \nabla c(\hat x)^\top [Q_{k_j}\nabla c(\hat x)[x_2-x_1] + AP_j(\mu_{j_2}-\mu_{j_1})] && \text{ by }\eqref{eq:SinglevaluedgphGj} \\
	&= [x_2-x_1]^\top \nabla^2(\hat yc)(\hat x)[x_2-x_1] + [x_2-x_1]^\top \nabla c(\hat x)^\top Q_{k_j}\nabla c(\hat x)[x_2-x_1] && \text{ by }\eqref{eq:thm6d1IP3}\\
	&>0,
	\end{align*}
	so $x_2=x_1$. But then \eqref{eq:thm6d1IP1}, \eqref{eq:SinglevaluedgphGj}, and \Cref{posdeflocally} imply
	\[
	y_2-y_1 \in \Null{\nabla c(\hat x)^\top}\cap \Ran{A} = \set{0},
	\]
	so $y_2=y_1$. The nondegeneracy condition of \Cref{nondegeneracy} and \eqref{eq:SinglevaluedgphGj} together imply
	\[
	0 = AP_j(\mu_{j_2}-\mu_{j_1}) \Longrightarrow \mu_{j_2}=\mu_{j_1},
	\]
	so single-valuedness is established. We conclude the proof by following the proof given in \cite[Theorem 6D.2]{dontchev2014implicit} and write $(x,y,\mu_j)=\bm x = \cG^{-j}_{\bm u}(0)$. Then the quadratic bound lemma and the choice of $\gamma$ gives
	\begingroup
	\allowdisplaybreaks
	\begin{align*}
	\norm{\begin{pmatrix}
		x-\bx\\ y-\by\\
		\end{pmatrix}} &\leq \norm{\bm x - \bm\bx_j} \\
	&= \dist{\bm\bx_j}{\cG^{-j}_{\bm u}(0)}\\
	&\le \lambda\dist{\bm 0}{\cG^j_{\bm u}(\bm\bx_j)}\\ 
	&\le \frac{2\gamma}{L}\dist{\bm 0}{\cG^j_{\bm u}(\bm\bx_j)} \\
	&\le \frac{2\gamma}{L}\norm{g_j(\bm u) + \nabla g_j(\bm u)(\bm\bx_j-\bm u) - g_j(\bm\bx_j)} && \text{ by \eqref{eq:defGju} and }-g_j(\bm\bx_j)\in G_0\\
	&=\frac{2\gamma}{L}\norm{F_{\bm u}(\bm\bx_j, g_j(\bm\bx_j))} && \text{ by \eqref{eq:F3linear}} \\
	&= \frac{2\gamma}{L}\norm{\eta_j(\hat x,\hat y) + \nabla\eta_j(\hat x,\hat y)\begin{pmatrix}\bx-\hat x\\ \by-\hat y\end{pmatrix} - \eta_j(\bx,\by)} && \text{ by \eqref{eq:linearizationnomudep}} \\
	& \leq \gamma \norm{\begin{pmatrix}\hat x-\bx\\ \hat y-\by\end{pmatrix}}^2 \\
	& \leq \gamma \norm{\bm u -\bm\bx_j}^2 \\
	&< \bar a,
	\end{align*}
	\endgroup
	so $\bm x = \cG^j_{\bm u}(\bm0) \in U_j$.
\end{proof}
Our work so far implies that Newton's method applied to the individual mappings $\cG^j_{\bm\bx_j}$ exhibit local quadratic convergence.
\begin{theorem}
	\label{thm:Gjquadconv}
	Let $\bx,\by,\bmu,\bc,\bk$, and $\cQ$ be as in \Cref{assum:mr}, set $\bm\bx_j:=(\bx,\by,\bmu_j)$, and let $\cG^j_{\bm\bx_j}$ be given by \eqref{eq:gphG^j_u}.
	Then, the mappings $\set{\cG_{\bm\bx_j}^j}_{j=1}^{\bk}$ are strongly metrically regular (see \Cref{def:strongmr}) at $\bm\bx_j$ for $\bm0$. Moreover, for all $j\in\set{1,\dots,\bk}$, there exists a neighborhood $U_j$ of $\bm\bx_j$ such that, for every $\bm x^0\in U_j$, there is a unique sequence $\bm x_j^k = (x^k, y^k, \mu^k_j)\subset U_j$ generated by Newton's method for $g_j + G_0$ \eqref{eq:geqnewton}. Both this sequence, and the sequence $(x^k, y^k)$, converge at a quadratic rate to $\bm\bx_j$ and $(\bx,\by)$ respectively.
\end{theorem}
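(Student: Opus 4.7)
The plan is to read off strong metric regularity and Newton quadratic convergence directly from the machinery already assembled in \Cref{nconecqlemma}, \Cref{newtonmr}, and especially \Cref{strongmr}; nothing new needs to be computed, only reorganized.

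First, for strong metric regularity, recall that \Cref{newtonmr} gave metric regularity of $\cG^j_{\bm\bx_j}$ at $\bm\bx_j$ for $\bm 0$ via the coderivative criterion. Specializing \Cref{strongmr} to $\bm u=\bm\bx_j$ produces a neighborhood of $\bm 0$ on which $\cG^{-j}_{\bm\bx_j}$ is single-valued. Combining these two facts with the definition \Cref{def:strongmr} yields the desired strong metric regularity; Lipschitz continuity of the localization of $\cG^{-j}_{\bm\bx_j}$ near $\bm 0$ then follows from \cite[Proposition 3G.1]{dontchev2014implicit}.

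Second, I would cast Newton's method for $g_j+G_0$ as the fixed-point iteration $\bm x^{k+1}_j = \cG^{-j}_{\bm x^k_j}(\bm 0)$. By \Cref{strongmr} there are neighborhoods $U_j$ of $\bm\bx_j$ and $V_j$ of $\bm 0$ on which $(\bm u,\bm z)\mapsto \cG^{-j}_{\bm u}(\bm z)$ is single-valued, and moreover $\cG^{-j}_{\bm u}(\bm 0)\in U_j$ whenever $\bm u\in U_j$. Thus, for any starting point $\bm x^0\in U_j$, the sequence $\set{\bm x^k_j}_{k\in\bN}$ is uniquely defined and remains in $U_j$. The crucial estimate, already proved at the end of \Cref{strongmr}, is the quadratic bound
\[
\norm{\cG^{-j}_{\bm u}(\bm 0)-\bm\bx_j} \;\le\; \gamma\,\norm{(\hat x-\bx,\hat y-\by)}^2 \;\le\; \gamma\,\norm{\bm u-\bm\bx_j}^2 \qquad \text{for every }\bm u=(\hat x,\hat y,\hat\mu_j)\in U_j,
\]
which is immediate from \eqref{eq:linearizationnomudep} and Lipschitz continuity of $\nabla\eta_j$. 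Setting $\bm u=\bm x^k_j$ gives $\norm{\bm x^{k+1}_j-\bm\bx_j}\le \gamma\norm{\bm x^k_j-\bm\bx_j}^2$, which is the quadratic convergence rate for the full primal-dual sequence.

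Third, by shrinking $U_j$ once more so that $\gamma\,\mathrm{diam}(U_j)<1$, the quadratic recursion both certifies that all iterates stay in $U_j$ and shows $\bm x^k_j\to \bm\bx_j$ at a quadratic rate. Quadratic convergence of the primal-dual projection $(x^k,y^k)\to(\bx,\by)$ is then automatic because
\[
\norm{(x^{k+1}-\bx,\,y^{k+1}-\by)} \;\le\; \norm{\bm x^{k+1}_j-\bm\bx_j} \;\le\; \gamma\,\norm{(x^k-\bx,\,y^k-\by)}^2 + \gamma\,\norm{\mu^k_j-\bmu_j}^2,
\]
and the right-hand side is itself bounded by $\gamma\norm{\bm x^k_j-\bm\bx_j}^2$. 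No obstacle of substance remains: the heavy lifting (the rank condition in \Cref{nconecqlemma}, the coderivative calculation in \Cref{newtonmr}, and the uniform single-valued localization together with the quadratic estimate in \Cref{strongmr}) has already been carried out. The only delicate point is a bookkeeping one, namely choosing $\bar a$ small enough that the Lipschitz constant $L$ of $\nabla\eta_j$ and the metric regularity modulus $\lambda$ combine with $\gamma>\tfrac12\lambda L$ to give both $\gamma\bar a<1$ and the applicability of \Cref{posdeflocally} on $U_j$; this is exactly the choice already made in the proof of \Cref{strongmr}, so the statement of the theorem follows by assembling these pieces.
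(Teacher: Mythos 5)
Your proposal matches the paper's proof essentially step for step: strong metric regularity comes from combining \Cref{newtonmr} (metric regularity via the coderivative criterion) with \Cref{strongmr} evaluated at $\bm u=\bm\bx_j$ (single-valued localization), and the quadratic recursion is read off from the estimate established at the end of \Cref{strongmr}'s proof, exactly as in \cite[Theorem 6D.2]{dontchev2014implicit}. One small slip worth fixing: in your final display you bound $\norm{(x^{k+1}-\bx,y^{k+1}-\by)}$ by $\gamma\norm{\bm x^k_j-\bm\bx_j}^2$, but to conclude that the $(x^k,y^k)$ subsequence itself satisfies a quadratic recursion you should invoke the sharper intermediate bound $\norm{\bm x^{k+1}_j-\bm\bx_j}\le\gamma\norm{(x^k-\bx,y^k-\by)}^2$ (which you quoted correctly when stating the estimate), since it is precisely the fact that the $\mu$-component can be dropped from the right-hand side that yields quadratic convergence of the primal--dual iterates alone.
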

\begin{proof}
	The metric regularity at $\bm\bx_j$ for $\bm0$ was  established in \Cref{newtonmr}. \Cref{strongmr} with $u=\bm\bx_j$ shows $\cG_{\bm\bx_j}^{-j}$ has a single-valued localization around $\bm0$ for $\bm\bx_j$, so the strong metric regularity of $\cG^j_{\bm\bx_j}$ at $\bm\bx_j$ for $\bm0$ follows.
	
	For the second claim, we again follow the proof in \cite[Theorem 6D.2]{dontchev2014implicit} by taking $U_j$ as in \Cref{strongmr}, and choosing any $\bm x^0\in U_j$. Following the proof of the final claim of \Cref{strongmr}, we find, for every $k\ge1$, the existence and uniqueness of $\bm x^k$ given $\bm x^{k-1}$ satisfying
	\begin{align*}
	\bm 0 \in \cG^j_{\bm x^{k-1}}(\bm x^k),\ 
	\norm{\begin{pmatrix} x^k -\bx\\ y^k-\by\end{pmatrix}}  \le \norm{\bm x^k - \bm\bx_j}  \le \gamma\norm{\begin{pmatrix} x^{k-1}-\bx\\ y^{k-1}-\by\end{pmatrix}}^2  \le \gamma\norm{\bm x^{k-1} - \bm\bx_j}^2, \ 
	\text{and }\bm x^k \in U_j.
	\end{align*}
	Moreover, since $\theta:=\gamma \norm{\bm x^0-\bm\bx_j} < \gamma \bar a < 1$,
	\(
	\norm{\bm x^k - \bm\bx_j} \le \theta^{2^k-1}\norm{\bm x^0 - \bm\bx_j}^2\text{ for all }k\ge1,
	\)
	which completes the proof of quadratic convergence of both sequences.
\end{proof}
We now move from an isolated analysis of the mappings $\cG_{\bm u}^j$ to how they behave as a whole. The goal is to guarantee the $y$ obtained by solving $\bm0\in \cG^j_{\bm u}(\bm x)$ at some $\bm u = (\hat x,\hat y,\hat \mu_j)$ for $\bm x=(x,y,\mu_j)$ has $y\in \sd h(c(\hx)+\nabla c(\hx)[x-\hx])$.
\begin{theorem}
	\label{thm:mrgluing}
	Let $\bx,\by,\bmu,\bc,\bk$, and $\cQ$ be as in \Cref{assum:mr}, set $\bm\bx_j:=(\bx,\by,\bmu_j)$, and let $\cG^j_{\bm\bx_j}$ be given by \eqref{eq:gphG^j_u}.
	Suppose $i\neq j$ and $i,j \in \set{1,\dotsc,\bar k}$. There exists a neighborhood $\cN$ of $(\bx,\by,\bmu_1,\dotsc,\bmu_{\bk})=:(\bx,\by,\bmu)\in\R^{n+m+\bk\ell}$ such that, if $(\hat x,\hat y,\hat \mu_1,\dotsc,\hat \mu_{\bk})\in \cN$ and $\bm u_j := (\hat x,\hat y,\hat \mu_j),\ \bm u_i := (\hat x , \hat y, \hat \mu_i)$, with $\hat \mu_i>0$ and $\hat \mu_j>0$, then
	\begin{equation}\label{eq:gluingtransitiongoal}
	\bm x_j := \cG^{-j}_{\bm u_j}(\bm 0) = \begin{pmatrix}
	x_j \\ y_j \\ \mu_j
	\end{pmatrix},\quad  \bm x_i := \cG^{-i}_{\bm u_i}(\bm 0) = \begin{pmatrix}
	x_i \\ y_i \\ \mu_i
	\end{pmatrix}\text{ satisfy }\begin{pmatrix}
	x_j\\y_j
	\end{pmatrix} = \begin{pmatrix}
	x_i\\y_i
	\end{pmatrix}\text{ for all }i,j\in\set{1,\dotsc,\bk}.
	\end{equation}
	That is, there exists $(x,y)\in \R^n\times\R^m$ such that $(x,y)=(x_i,y_i)$ for all $i\in\set{1,\dotsc,\bk}$.
	Moreover,
	\begin{enumerate}[label=(\roman*)]
		\item $c(\hat x) + \nabla c(x)[x-\hat x]\in \cM_{\bc}$,
		\item $\mu(c(\hat x) + \nabla c(\hat x)[x-\hat x], y)_j = \mu_j>0$ for all $j\in\set{1,\dotsc,\bar k}$,
		\item $y\in \ri{\sd h(c(\hat x)+\nabla c(\hat x)[x-\hat x])}$,
	\end{enumerate} 
	where the mapping $\mu(c,y)$ is defined in \Cref{uniquemugivenv}.
\end{theorem}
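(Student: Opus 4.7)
The plan is to exploit a key structural observation: in the linearized Newton system $\cG^j_{\bm u_j}(\bm x) = 0$ whose solution defines $\bm x_j = \cG^{-j}_{\bm u_j}(\bm 0)$, only the second block of equations depends on the index $j$ (through $Q_{k_j}$, $b_{k_j}$, and $P_j$); the first and third blocks are intrinsic to the primal--dual pair $(x,y)$ alone. So I will solve the $j = 1$ system, extract $(x,y)$, build each remaining $\mu_j'$ by hand from the second equation, and invoke the single-valued localization from \Cref{strongmr} to identify this construction with the actual Newton output for index $j$.

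\emph{Step 1 (Localization).} I first shrink $\cN$ so that every sub-triple $\bm u_j := (\hx,\hy,\hmu_j)$ lies in the neighborhood $U_j$ from \Cref{strongmr} with $\hmu_j > 0$. Then each $\bm x_j = (x_j,y_j,\mu_j)$ is well-defined, continuous in $\bm u_j$, and converges to $\bm\bx_j$ as $\cN$ contracts to $\{(\bx,\by,\bmu)\}$.

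\emph{Step 2 (Extending the $j=1$ solution to all $j$).} Set $(x,y) := (x_1,y_1)$ and $\hat c := c(\hx) + \nabla c(\hx)[x - \hx]$. For each $j$ I try to solve $AP_j \mu_j' = y - Q_{k_j}\hat c - b_{k_j}$. The main obstacle is showing the right-hand side lies in $\Ran{A}$. I decompose
\[
y - Q_{k_j}\hat c - b_{k_j} = \underbrace{(y - Q_{k_1}\hat c - b_{k_1})}_{=\,AP_1\mu_1\,\in\,\Ran{A}} + \underbrace{(Q_{k_1}-Q_{k_j})(\hat c - \bc)}_{\in\,\Ran{A}\text{ by \Cref{qqvinranA}}} + \underbrace{(Q_{k_1}-Q_{k_j})\bc + b_{k_1} - b_{k_j}}_{=\,AP_j\bmu_j - AP_1\bmu_1\,\in\,\Ran{A}},
\]
where the middle inclusion uses $\hat c - \bc \in \Null{A^\top}$ (the third block of $\cG^1_{\bm u_1}(\bm x_1) = 0$) and the last equality comes from subtracting the base-point identities $\by = Q_{k_i}\bc + b_{k_i} + AP_i\bmu_i$ for $i \in \{1,j\}$ supplied by \Cref{assum:mr}(d). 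By nondegeneracy, the equation then has the unique solution $\mu_j' = P_j(A^\top A)^{-1} A^\top[y - Q_{k_j}\hat c - b_{k_j}]$, and by continuity $\mu_j' \to \bmu_j > 0$ as $\cN$ shrinks.

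\emph{Step 3 (Identification and parts (i)--(iii)).} The triple $(x,y,\mu_j')$ satisfies all three blocks of $\cG^j_{\bm u_j}(\bm x) \ni 0$: the first and third are $j$-independent and hold because $(x,y)=(x_1,y_1)$ solves them, and the second is exactly our construction; a further shrinkage of $\cN$ places this triple in $U_j$. The single-valued localization of \Cref{strongmr} then forces $(x,y,\mu_j') = (x_j,y_j,\mu_j)$, so $(x_j,y_j)=(x_1,y_1)$ for every $j$, which is \eqref{eq:gluingtransitiongoal}. For the remaining conclusions, the third Newton block gives $A^\top(\hat c - \bc) = 0$, placing $\hat c$ in $\aff \cM_{\bc}$ via \Cref{Mfacialrep}; the finitely many strict inequalities defining $\cM_{\bc}$ at $\bc$ persist at $\hat c$ by continuity after further shrinkage of $\cN$, giving~(i). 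The second Newton blocks across all $j$ together realize the system \eqref{eq:plqsdsystem} with $\mu = (\mu_1^\top,\dotsc,\mu_{\bk}^\top)^\top > 0$, so $y \in \sd h(\hat c)$ by \Cref{sdaveraging}, and uniqueness (\Cref{uniquemugivenv}) identifies $\mu(\hat c,y)_j = \mu_j > 0$, giving~(ii). Finally~(iii) follows from \Cref{risdiffstrict}.
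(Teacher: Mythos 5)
Your proof is correct, and it takes a genuinely different route from the paper's. The paper subtracts the Newton systems for two arbitrary indices $i \neq j$, derives the four difference equations \eqref{eq:gluing1}--\eqref{eq:gluing4}, applies the ``twist'' identity \eqref{eq:twist} to obtain $y_j - y_i - Q_{k_j}\nabla c(\hat x)[x_j - x_i] \in \Ran{A}$, and then closes by explicitly re-invoking the positive-definiteness condition of \Cref{posdeflocally} via an inner-product computation. You instead privilege the index $j=1$: you solve its system once, observe that the first and third Newton blocks are $j$-independent, and then construct each $\mu_j'$ directly by solving the $j$th second block, using a three-term $\Ran{A}$-decomposition in place of the paper's twist computation. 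You then appeal to the single-valuedness of $\cG^{-j}_{\bm u_j}(\bm 0)$ from \Cref{strongmr} to identify $(x_1,y_1,\mu_j')$ with $\bm x_j$, which encapsulates the positive-definiteness argument rather than re-running it. The decomposition you write is algebraically correct: the first term $y - Q_{k_1}\hat c - b_{k_1} = AP_1\mu_1$ from the $j=1$ second block, the middle term lands in $\Ran{A}$ by \Cref{qqvinranA} since $\hat c - \bc \in \Null{A^\top}$ from the third block, and the constant term equals $AP_j\bmu_j - AP_1\bmu_1$ from the base-point identities in \Cref{assum:mr}(d). Your route is arguably more constructive and transparent about the role of each hypothesis; the paper's is symmetric in $(i,j)$ and works directly with the two Newton outputs without building a candidate solution. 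Both establish the same neighborhood-shrinking preliminaries and handle (i)--(iii) in essentially the same way.
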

\begin{proof}
	For $j\in\set{1,\dotsc,\bk}$, define $\pi_j:\R^{n+m+\bk\ell}\to\R^{n+m+\ell}$ by
	\(
	\pi_j(x,y,\mu_1,\dotsc,\mu_j,\dotsc,\mu_{\bk}) := (x,y,\mu_j).
	\)
	We first show there exists a neighborhood $\cN$ of $(\bx,\by,\bmu_1,\dotsc,\bmu_{\bk})$ such that, for all $j\in\set{1,\dotsc,\bk}$ and all $(\hx_j,\hy_j,\hat\mu_j)=\bm u_j \in \cN_j:=\pi_j(\cN)$,
	\begin{enumerate}[label=(\alph*)]
		\item the mappings $\set{\cG^{-j}_{\bm u_j}(\bm 0)}_{j=1}^{\bk}$ are single-valued with $\cG^{-j}_{\bm u_j}(\bm 0)\in \cN_j$,
		\item $\mu_j$ associated to $\cG^{-j}_{\bm u_j}(\bm 0)$ has $\mu_j>0$,
		\item the condition \eqref{eq:posdefhess2} is satisfied at all $(x,y,\mu_j)\in\cN_j$, and
		\item $c(\hx_j) + \nabla c(\hx_j)[x_j-\hx_j]\in \cM_{\bc}$, where $(x_j,y_j,\mu_j) = \cG^{-j}_{(\hx,\hy,\hat\mu_j)}(\bm 0)$.
	\end{enumerate}
	Parts (a), (b), and (c) are a consequence of \Cref{strongmr}. We now justify (d). For any $j\in\set{1,\dotsc,\bk}$, the definition of $(x_j,y_j,\mu_j) = \cG^{-j}_{(\hx,\hy,\hat\mu_j)}(\bm 0)$ implies, in particular,
	\(
	A^\top[c(\hx_j)+\nabla c(\hx_j)[x_j-\hx_j] - c(\bx)]=0.
	\)
	By the polyhedral structure of $\cM_{\bc}$, for any $w\in \Null{A^\top}=\tcone{\bc}{\cM_{\bc}}$, there exists $\tau > 0$ such that  $\bc + tw\in \cM_{\bc}$ for all $|t|<\tau$.
	\Cref{strongmr} argued that, for all sufficiently small $\epsilon>0$,
	\begin{equation}
	\label{eq:GmapsBtoB}
	\cG^{-j}_{\bm u}(\bm 0)\in (\bm\bx_j + \eps\bB) 
	\text{ for all }\bm u\in \bm\bx_j+\eps\bB\text{ (see (a))}.
	\end{equation}
	The continuity of $c$ and \eqref{eq:GmapsBtoB} imply that for $\bm u_j$ sufficiently close to $\bm\bx_j,\ c(\hx_j) + \nabla c(\hx_j)[x_j-\hx_j]$ can be made as close to $c(\bx)$ as desired. Then there exists a neighborhood of $(\bx,\by,\bmu_j)$ such (d) holds. The neighborhood $\cN$ also exists because there are only finitely many indices $j$ in consideration.
	
	Now let $\bm u_j := (\hat x,\hat y,\hat \mu_j)\in \cN_j,\ \bm u_i := (\hat x , \hat y, \hat \mu_i)\in\cN_i$, with $\hat \mu_i>0$ and $\hat \mu_j>0$, and denote
	\[
	\cG^{-j}_{\bm u_j}(\bm 0) = \begin{pmatrix}
	x_j \\ y_j \\ \mu_j
	\end{pmatrix},\quad  \cG^{-i}_{\bm u_i}(\bm 0) = \begin{pmatrix}
	x_i \\ y_i \\ \mu_i
	\end{pmatrix}.
	\]	
	By \eqref{eq:gphGj},
	\begin{align}
	\label{eq:gluing1}0 &= \nabla^2(\hat yc)(\hat x)[x_j-x_i] + \nabla c(\hat x)^\top (y_j-y_i)\\
	\label{eq:gluing2}y_i &= Q_{k_i}(c(\hx) + \nabla c(\hat x)[x_i-\hx]) + AP_i\mu_i + b_{k_i}\\
	\label{eq:gluing3}y_j &= Q_{k_j}(c(\hx) + \nabla c(\hat x)[x_j-\hx]) + AP_j\mu_j + b_{k_j} \\
	\label{eq:gluing4}0 &= A^\top \nabla c(\hat x)[x_j-x_i]
	\end{align}
	Define $\hc_i := c(\hx) + \nabla c(\hx)[x_i-\hx]\in \cM_{\bc}$ by (d). By \Cref{assum:smoothmr}, $\by = Q_{k_i}\bc + b_{k_i} + AP_i\bar\mu_i = Q_{k_j}\bc + b_{k_j} + AP_j\bar\mu_j$, and in particular, 
	\begin{equation}\label{eq:twistargument}
	Q_{k_i}\bc + b_{k_i}-b_{k_j} = Q_{k_j}\bc + AP_j\bmu_j - AP_i\bmu_i
	. \end{equation}
	Then \eqref{eq:twist} with $w:=\hc_i - \bc\in\Null{A^\top},\ t=1,$ and any $y\in \sd h(\hc_i)$ gives
	\begin{align*}
	y_i &= Q_{k_i}w + Q_{k_i}\bc + b_{k_i} + AP_i\mu_i\\
	&= \left(Q_{k_j}w + A \set{
		P_j(\mu(\hc_i,y)_j-\bar \mu_j) - P_i(\mu(\hc_i,y)_i - \bmu_i)}\right) + Q_{k_i}\bc + b_{k_i} + AP_i\mu_i + b_{k_j} - b_{k_j} 
	\\
	&= Q_{k_j}w + b_{k_j} + [Q_{k_i}\bc + b_{k_i} - b_{k_j}] + AP_i\mu_i + A \set{
		P_j(\mu(\hc_i,y)_j-\bar \mu_j) - P_i(\mu(\hc_i,y)_i - \bmu_i)}\\
	&= Q_{k_j}[\hc_i-\bc] + b_{k_j} + [Q_{k_j}\bc + AP_j\bmu_j - AP_i\bmu_i] + AP_i\mu_i + A \set{
		P_j(\mu(\hc_i,y)_j-\bar \mu_j) - P_i(\mu(\hc_i,y)_i - \bmu_i)}\\
	&=Q_{k_j}\hc_i + b_{k_j} + AP_i[\mu_i - \mu(\hc_i,y)_i] + AP_j\mu(\hc_i,y)_j\\
	&\in y_j + Q_{k_j}\nabla c(\hx)[x_i-x_j] + \Ran{A}
	\end{align*}
	where the fourth equivalence follows from \eqref{eq:twistargument}.
	This implies 
	\begin{equation}
	\label{eq:gluingyiyj}
	y_j - y_i - Q_{k_j}\nabla c(\hat x)[x_j-x_i] \in \Ran{A}.
	\end{equation}
	Taking the inner product on both sides of \eqref{eq:gluing1} with $x_j-x_i$ gives 
	\begin{align*}
	0 &= [x_j-x_i]^\top\nabla^2(\hat yc)(\hat x)[x_j-x_i] + [x_j-x_i]^\top\nabla c(\hat x)^\top (y_j-y_i) \\
	&=[x_j-x_i]^\top\nabla^2(\hat yc)(\hat x)[x_j-x_i] + [x_j-x_i]^\top\nabla c(\hat x)^\top Q_{k_j}\nabla c(\hx)[x_j-x_i] \text{ by }\eqref{eq:gluingyiyj},\eqref{eq:gluing4}.
	\end{align*}
	By \Cref{posdeflocally} and \eqref{eq:gluing4}, $x_i=x_j$. Then \eqref{eq:gluingyiyj}, \eqref{eq:gluing1}, and (c) imply $y_i-y_j\in \Ran{A}\cap\Null{\nabla c(\hat x)^\top}=\set{0}$, which proves \eqref{eq:gluingtransitiongoal}.
	\\
	Since $i$ and $j$ were arbitrary, letting $x$ and $y$ denote the common values of the first two components of $\cG^{-j}_{\bm u_j}(\bm 0)$ for each $j\in\set{1,\dotsc,\bk}$. Then $Jy = \cQ(c(\hat x) + \nabla c(\hat x)[x-\hat x]) + \cB + \hat{\cA}\mu$, with $c(\hx)+\nabla c(\hx)[x-\hx]\in \cM_{\bc}$, and $\mu_1,\dotsc,\mu_{\bar k}>0$. By \eqref{eq:plqsdsystem} and \Cref{risdiffstrict}, $\mu(c(\hx)+\nabla c(\hx)[x-\hx],y)_j = \mu_j>0$, with $y\in \ri{\sd h(c(\hx)+\nabla c(\hx)[x-\hx])}$.
\end{proof}
Our final theorem integrates the ideas from \Cref{sec:partialsmoothness} and our work in this section to establish the local quadratic convergence of Newton's method for \ref{theprogram}.
\begin{theorem}
	\label{thm:finalmrthm}
	Let $\bx,\by,\bmu,\bc,\bk$, and $\cQ$ be as in \Cref{assum:mr}, set $\bm\bx_j:=(\bx,\by,\bmu_j)$, and let $\cG^j_{\bm\bx_j}$ be given by \eqref{eq:gphG^j_u}.
	There exists a neighborhood $\cN$ of $(\bx,\by,\bmu)$ on which the conclusions of \Cref{posdeflocally} are satisfied such that if $(x^0,y^0,\mu^0)\in \cN$, then there exists a unique sequence $\set{(x^k,y^k,\mu^k)}_{k\in\bN}$ satisfying the optimality conditions of \ref{introsubprob} for all $k\in\bN$, with
	\begin{enumerate}[label=(\alph*)]
		\item $c(x^{k-1}) + \nabla c(x^{k-1})[x^k- x^{k-1}]\in \cM_{\bc}$,
		\item $\mu(c(x^{k-1})+\nabla c(x^{k-1})[x^k-x^{k-1}], y^k)_j>0$ for all $j\in\set{1,\dotsc,\bar k}$,
		\item $y^k\in \ri{\sd h(c(x^{k-1})+\nabla c(x^{k-1})[x^k-x^{k-1}])}$,
		\item $H_{k-1}[x^k-x^{k-1}] + \nabla c(x^{k-1})^\top y^k = 0$\label{item:cricticalsubprobclaim},
		\item $x^k-x^{k-1}$ is a strong local minimizer of the model function  $\phi_{(x^{k-1},y^{k-1})}$,\label{item:slmsubprobclaim} given by \Cref{def:appendixmodelfn}.
	\end{enumerate} 
	Moreover, the sequence $(x^k,y^k)$ converges to $(\bx,\by)$ at a quadratic rate.
\end{theorem}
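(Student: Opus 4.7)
The plan is to reduce a single step of Newton's method applied to \ref{introsubprob} to the simultaneous solution of the $\bk$ linearized auxiliary equations $\bm 0 \in \cG^j_{\bm u_j}(\bm x_j)$, exploiting the strong metric regularity established in \Cref{thm:Gjquadconv} and the gluing statement of \Cref{thm:mrgluing}. First I would shrink $\cN$ so it sits inside the neighborhoods supplied by \Cref{thm:mrgluing} and \Cref{posdeflocally}, and so every $(x,y,\mu)\in\cN$ has $\mu_j>0$ for all $j$. The construction then proceeds by induction on $k$: given $(x^{k-1},y^{k-1},\mu^{k-1})\in\cN$, for each $j$ set $\bm u_j:=(x^{k-1},y^{k-1},\mu^{k-1}_j)$ and define $(x^k_j,y^k_j,\mu^k_j):=\cG^{-j}_{\bm u_j}(\bm 0)$ via \Cref{strongmr}. \Cref{thm:mrgluing} then collapses the first two components to a common pair $(x^k,y^k)$ and delivers conclusions (a)--(c) at once.

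Conclusion (d) follows by reading off the first block of $\bm 0\in\cG^j_{\bm u_j}(\bm x_j^k)$ from \eqref{eq:gphGj}, which gives $\nabla^2(y^{k-1}c)(x^{k-1})[x^k-x^{k-1}]+\nabla c(x^{k-1})^\top y^k=0$. Combining with (c), this is exactly the Newton generalized equation \eqref{eq:introNewton} for $g+G$ at $(x^{k-1},y^{k-1})$, so by the lemma of \Cref{sec:cvxcomp} the pair $(x^k-x^{k-1},y^k)$ satisfies the first-order conditions for the model subproblem $\phi_{(x^{k-1},y^{k-1})}(d):=h(c(x^{k-1})+\nabla c(x^{k-1})d)+\tfrac12 d^\top H_{k-1}d$ with $H_{k-1}:=\nabla^2(y^{k-1}c)(x^{k-1})$. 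To upgrade to the strong minimality claim (e), I would invoke \Cref{thm:plqsonsc}(b) applied to $\phi_{(x^{k-1},y^{k-1})}$: \eqref{eq:bcq} is automatic since the inner map is affine, and the required second-order inequality on the non-ascent cone reduces, via (a), (b), and \Cref{lem:D(x)andA^Tc'(x)}, to the positive-definiteness of $w^\top\nabla c(x^{k-1})^\top Q_{k_j}\nabla c(x^{k-1})w+w^\top H_{k-1}w$ on $\Null{A^\top\nabla c(x^{k-1})}\setminus\set{0}$ for each active $j$---which is exactly \eqref{eq:posdefhess2}, preserved across $\cN$ by \Cref{posdeflocally}.

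Quadratic convergence would then be harvested directly from the estimate already derived inside \Cref{strongmr}: for any fixed $j$ the chain $\norm{(x^k-\bx,y^k-\by)}\le\norm{\bm x^k_j-\bm\bx_j}\le\gamma\norm{(x^{k-1}-\bx,y^{k-1}-\by)}^2$ uses only that $\eta_j(\bx,\by)=0$ and the quadratic bound on $\eta_j$, both of which survive the gluing. To close the induction I would use the closed-form expression of \Cref{uniquemugivenv} to see that $\mu^k_j$ is a continuous function of $(c(x^{k-1})+\nabla c(x^{k-1})[x^k-x^{k-1}],y^k)$, both arguments of which converge to $\bc$ and $\by$ respectively, so $\mu^k\to\bmu$ and $(x^k,y^k,\mu^k)$ remains in $\cN$. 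The delicate step will be (e): one must track exactly which polyhedral pieces of $h$ are active after linearization at the moving base point $(x^{k-1},y^{k-1})$, use (a) and (b) to identify them with $\cK(\bc)$, and then confirm that the curvature condition \eqref{eq:posdefhess2}---originally posed at $(\bx,\by)$---truly propagates along the iteration, which is the content of \Cref{posdeflocally}.
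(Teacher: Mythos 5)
Your proposal takes essentially the same route as the paper: claims (a)--(d) and the quadratic rate are harvested from \Cref{thm:Gjquadconv} and \Cref{thm:mrgluing}, and claim (e) is reduced via the model-function second-order conditions (\Cref{lem:modelsosc} and \Cref{lem:modelnonascent}, which you invoke through the equivalent \Cref{thm:plqsonsc}(b) and \Cref{lem:D(x)andA^Tc'(x)}) to the positivity condition \eqref{eq:posdefhess2}, preserved locally by \Cref{posdeflocally}. One small caution: the assertion that \eqref{eq:bcq} for $\phi_{(x^{k-1},y^{k-1})}$ is ``automatic since the inner map is affine'' is a bit glib --- the inner map $\Phi_{\bm u}$ of \Cref{def:appendixmodelfn} has a quadratic second coordinate, and the BCQ for the model function is actually inherited from \eqref{eq:bcq} for $f$ at $x^{k-1}$ via the unnamed lemma following \Cref{def:appendixmodelfn}; the affineness of $d\mapsto c(\hx)+\nabla c(\hx)d$ is used inside that lemma, but the conclusion is not immediate.
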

\begin{proof}
	All claims except \ref{item:slmsubprobclaim} follow from \Cref{thm:Gjquadconv} and \Cref{thm:mrgluing}. By \Cref{lem:modelsosc}, \Cref{lem:modelnonascent}, and \ref{item:cricticalsubprobclaim}, claim \ref{item:slmsubprobclaim} is equivalent to showing
	\begin{equation}\label{eq:slmgoal}
	h''(c(x^{k-1})+\nabla c(x^{k-1})[x^k-x^{k-1}];\nabla c(x^{k-1})\delta) + \delta^\top H_{k-1}\delta > 0\ \forall\,\delta\in \Null{A^\top\nabla c(x^{k-1})}\setminus\set{0}.
	\end{equation}
	Using \eqref{eq:plq2ndnonneg} and partial smoothness,
	\[
	h''(c(x^{k-1})+\nabla c(x^{k-1})[x^k-x^{k-1}];\nabla c(x^{k-1})\delta) = \delta^\top \nabla c(x^{k-1})^\top Q_j \nabla c(x^{k-1}) \delta,\quad \forall\, j\in\cK(\bc),
	\]
	so \eqref{eq:posdefhess2} gives \eqref{eq:slmgoal}.
\end{proof}
\begin{remark}
	The fact that $\set{x^k-x^{k-1}}$ is a strong local minimizer of $\phi_{(x^{k-1},y^{k-1})}$ does not mean that there are not other critical points for the model function outside the neighborhood of interest. It may be that at any iteration the problem \ref{subproblems} does not have a finite optimal value, in particular, should there exist directions of negative curvature orthogonal to the manifold.
\end{remark}
\subsection{Smooth Problems}\label{smoothproblems}
In this section, we make the following assumptions:
\begin{assumptions}
	\label{assum:smoothmr}
	Let $f$ be as in \ref{theprogram} and $(\bx,\by)\in\dom{f}\times\R^m,\ \bc:=c(\bx),\ \bk=|\cK(\bc)|$, where $\cK(\bc)$ are the active indices given in \Cref{def:activeidx}. Let $\cM_{\bc}$ be the active manifold defined in \eqref{eq:manifold}. We assume that
	\begin{enumerate}[label=(\alph*)]
		\item $\dom{h}$ is given by the Rockafellar-Wets PLQ representation of \Cref{cor:plqstructthm},\label{assumitem:smoothmrRWPLQ}
		\item $c$ is $\cC^3$-smooth,
		\item $\cK(\bc)=\set{k_0}$, \label{assumitem:smoothmrK=1}
		\item \label{assumitem:smoothmrSOSC}
		$\bx$ satisfies the second-order sufficient conditions of \Cref{thm:plqsonsc}, 
	\end{enumerate}
\end{assumptions}
\begin{remark}
	Since $\bk=1$, we omit reference to the index $k_0$ for the rest of this section.
\end{remark}
\begin{remark}
	By \ref{assumitem:smoothmrRWPLQ} and \ref{assumitem:smoothmrK=1}, $c(\bx)\in\intr{\dom{h}}$ and $\sd h(\bc)=\set{\by}$. Then, \ref{assumitem:smoothmrSOSC} becomes
	\begin{align*}
	\by = Q\bc + b,\ \nabla c(\bx)^\top \by = 0,\ 
	d^\top\nabla c(\bx)^\top Q\nabla c(\bx)d + d^\top\nabla^2(\by c)(\bx)d >0\ \forall\, d\in\R^n\setminus\set{0},\text{ where }D(\bx)=\R^n.
	\end{align*}
\end{remark}
As in \Cref{posdeflocally}, we have the following stability result.
\begin{lemma}
	\label{lem:smoothposdeflocally}
	Suppose
\(
	d^\top \nabla c(\bx)^\top Q\nabla c(\bx)d + d^\top \nabla^2(\by c)(\bx) d >0\) for all \(d\in \R^n\setminus\set{0}.
\)	Then, there exists a neighborhood $\cN$ of $(\bx,\by)$ such that if $(x,y)\in \cN$ then,
	\begin{equation}
	\label{eq:smoothposdefhess2}
	d^\top \nabla c(x)^\top Q \nabla c(x)d + d^\top\nabla^2(y c)(x)d > 0,\quad\forall\  d\in \R^n\setminus\set{0},
	\end{equation}
	and $c(x)\in\intr{\dom{h}}$.
\end{lemma}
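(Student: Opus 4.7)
The plan is to prove each of the two conclusions by a straightforward continuity argument. The statement is a direct analogue of \Cref{posdeflocally}, but is simpler because \Cref{assum:smoothmr}\ref{assumitem:smoothmrK=1} reduces the non-ascent cone to all of $\R^n$ and eliminates the need to restrict to $\Null{A^\top \nabla c(x)}$.

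For the positive-definiteness inequality \eqref{eq:smoothposdefhess2}, I would introduce the symmetric matrix-valued map
\[
M(x,y) := \nabla c(x)^\top Q\, \nabla c(x) + \nabla^2(yc)(x) = \nabla c(x)^\top Q\, \nabla c(x) + \sum_{i=1}^m y_i\nabla^2 c_i(x).
\]
Because $c$ is $\cC^3$-smooth, $\nabla c$ and each $\nabla^2 c_i$ are continuous in $x$, and $M$ depends linearly on $y$ with continuous coefficients, so $(x,y)\mapsto M(x,y)$ is continuous on $\R^{n+m}$. The hypothesis says $M(\bx,\by)$ is positive definite, equivalently $\lambda_{\min}(M(\bx,\by))>0$. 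Since the minimum-eigenvalue map is continuous on the space of symmetric matrices (by the Courant--Fischer characterization), there exists a neighborhood $\cN_1$ of $(\bx,\by)$ on which $\lambda_{\min}(M(x,y))>0$, which yields \eqref{eq:smoothposdefhess2}.

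For the claim $c(x)\in\intr{\dom{h}}$, the remark following \Cref{assum:smoothmr} supplies $\bc = c(\bx)\in\intr{\dom{h}}$, using \ref{assumitem:smoothmrRWPLQ} and \ref{assumitem:smoothmrK=1}. Since $\intr{\dom{h}}$ is open and $c$ is continuous, the preimage $c^{-1}(\intr{\dom{h}})$ is an open neighborhood of $\bx$; set $\cN_2 := c^{-1}(\intr{\dom{h}})\times \R^m$ and take $\cN := \cN_1 \cap \cN_2$.

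There is essentially no technical obstacle here: both assertions are open-condition arguments built from continuity of $\nabla c$, $\nabla^2 c$, and $\lambda_{\min}$. The one subtlety worth flagging is the use of $\bc\in\intr{\dom{h}}$, which is not assumed outright but follows from the Rockafellar--Wets PLQ representation together with $\cK(\bc)$ being a singleton, as noted in the remark. No variational or second-order calculus tools beyond basic continuity are needed.
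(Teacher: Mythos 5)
Your proof is correct and matches the approach the paper alludes to but does not spell out: the paper states Lemma~\ref{lem:smoothposdeflocally} (and its analogue \Cref{posdeflocally}) without proof, simply remarking that it follows ``by a continuity argument in $(x,y)$.'' Your write-up, via continuity of $(x,y)\mapsto \nabla c(x)^\top Q\nabla c(x)+\nabla^2(yc)(x)$, continuity of $\lambda_{\min}$, and openness of $c^{-1}(\intr{\dom{h}})$, is precisely that argument made explicit, and the observation that $c(\bx)\in\intr{\dom{h}}$ comes from the remark following \Cref{assum:smoothmr} is the right citation.
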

Our local analogue of the KKT mapping \eqref{kktS} is the following.
\begin{definition}
	Define $g:\R^{n+m}\to\R^{n+m}$ by
	\[
	g(x,y) := \begin{pmatrix}
	\nabla c(x)^\top y\\ y - Qc(x) - b
	\end{pmatrix},\quad G := \set{0}^{n+m},
	\]
	and set $\bm\bx := (\bx, \by)$. Then,
	\[
	\nabla g(x,y) = \begin{pmatrix}
	\nabla^2(y c)(x)  & \nabla c(x)^\top \\ -Q\nabla c(x) & I_m
	\end{pmatrix},\quad g(\bx,\by) = \begin{pmatrix}
	0 \\ 0
	\end{pmatrix}
	\]
\end{definition}
\Cref{assum:smoothmr} \ref{assumitem:smoothmrSOSC} implies $\nabla g(\bx,\by)$ is nonsingular. Consequently, and the Newton method \eqref{eq:geqnewton} corresponds to the classical Newton's method for solving the \emph{equation} $g(x,y)=0$. Namely,
\begin{equation}\label{eq:smoothNewton}
\text{Find }(x^{k+1},y^{k+1})\text{ such that }g(x^k,y^k)+\nabla g(x^k,y^k)\begin{pmatrix}
x^{k+1}-x^k\\
y^{k+1}-y^k
\end{pmatrix} = 0.
\end{equation}
The local quadratic convergence of the iteration \eqref{eq:smoothNewton} near $(\bx,\by)$ with $\nabla g(\bx,\by)$ is nonsingular is well-known, with \eqref{eq:smoothNewton} corresponding to the optimality conditions for \ref{introsubprob}. We conclude with the following theorem, which parallels \Cref{thm:finalmrthm}.
\begin{theorem}
	Let $\bx,\by, \bc:=c(\bx)$, and $\cM_{\bc}$ be as in \Cref{assum:smoothmr}. Then, there exists a neighborhood $\cN$ of $(\bx,\by)$ on which the conclusions of \Cref{lem:smoothposdeflocally} are satisfied such that if $(x^0,y^0)\in \cN$, then there exists a unique sequence $\set{(x^k,y^k)}_{k\in\bN}$ satisfying the optimality conditions of \ref{introsubprob} for all $k\in\bN$, with
	\begin{enumerate}[label=(\alph*)]
		\item $c(x^{k-1}) + \nabla c(x^{k-1})[x^k- x^{k-1}]\in \cM_{\bc}$,
		\item $\sd h(c(x^{k-1})+\nabla c(x^{k-1})[x^k-x^{k-1}])=\set{y^k}$,
		\item $H_{k-1}[x^k-x^{k-1}] + \nabla c(x^{k-1})^\top y^k = 0$,
		\item $x^k-x^{k-1}$ is a strong local minimizer of the model function  $\phi_{(x^{k-1},y^{k-1})}$,\label{item:smoothslmsubprobclaim} given by \Cref{def:appendixmodelfn}.
	\end{enumerate} 
	Moreover, the sequence $(x^k,y^k)$ converges to $(\bx,\by)$ at a quadratic rate.
\end{theorem}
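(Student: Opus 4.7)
The plan is to reduce this theorem to the classical Newton's method for solving the equation $g(x,y)=0$ at a regular zero, and then transfer the four geometric properties via the locality afforded by \Cref{lem:smoothposdeflocally}.

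First, I would verify that $\nabla g(\bx,\by)$ is nonsingular. Since $\bk=1$, the set $\dom h$ contains $\bc$ in its interior and $\sd h(\bc)=\set{\by}=\set{Q\bc+b}$, so the first--order condition $\nabla c(\bx)^\top\by=0$ holds and the directions of non-ascent satisfy $D(\bx)=\R^n$. Hence the second-order sufficient condition reduces to
$d^\top \nabla c(\bx)^\top Q\nabla c(\bx)d + d^\top\nabla^2(\by c)(\bx)d > 0$ for all $d\neq 0$, which, together with the block structure of $\nabla g(\bx,\by)$, shows nonsingularity (if $\nabla g(\bx,\by)\binom{d}{v}=0$, then $v=Q\nabla c(\bx)d$ and $(\nabla^2(\by c)(\bx) + \nabla c(\bx)^\top Q\nabla c(\bx))d=0$, forcing $d=0$ and hence $v=0$). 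By the classical local convergence theorem for Newton's method on smooth equations (equivalent to \cite[Theorem 6D.2]{dontchev2014implicit} applied to $G=\set{0}$), there is a neighborhood $\cN_0$ of $(\bx,\by)$ on which the iteration \eqref{eq:smoothNewton} is well-defined, stays in $\cN_0$, and converges quadratically to $(\bx,\by)$.

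Next, I would shrink $\cN_0$ to $\cN\subset\cN_0$ so that \Cref{lem:smoothposdeflocally} applies throughout $\cN$, and so that for every $(x,y)\in\cN$ and every $(x',y')\in\cN$ the linearized point $c(x')+\nabla c(x')[x-x']$ lies in $\intr\dom h$ (this is possible by continuity of $c$ and $\nabla c$ at $\bx$ together with the fact that $\bc\in\intr\dom h$). Since $\bk=1$ means $\cM_{\bc}=\ri(C_{k_0})=\intr\dom h$, claim (a) follows automatically for every Newton iterate $(x^{k-1},y^{k-1})\in\cN$ producing $x^k$. Claim (b) then follows from \Cref{plqformulae}: on $\intr\dom h$ the function $h$ coincides with the quadratic $\tfrac12\ip{c}{Qc}+\ip{b}{c}+\beta$, so $\sd h$ is the singleton $\set{Qc+b}$. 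The second row of \eqref{eq:smoothNewton} reads exactly $y^k=Q[c(x^{k-1})+\nabla c(x^{k-1})[x^k-x^{k-1}]]+b$, which gives (b), and the first row of \eqref{eq:smoothNewton} is precisely (c).

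Finally, to establish (d) I would invoke the model-function framework in the appendix referenced via \Cref{def:appendixmodelfn}, together with the analogs of \Cref{lem:modelsosc} and \Cref{lem:modelnonascent} in the $\bk=1$ setting. By (c), the vector $\delta:=x^k-x^{k-1}$ is a critical point of $\phi_{(x^{k-1},y^{k-1})}$, so strong local minimality reduces to the positive-definiteness inequality
\[
h''\bigl(c(x^{k-1})+\nabla c(x^{k-1})\delta;\nabla c(x^{k-1})d\bigr) + d^\top H_{k-1} d > 0 \quad\forall\, d\in\R^n\setminus\set{0}.
\]
Using \eqref{eq:plq2ndnonneg} and (a), the left-hand side equals $d^\top\nabla c(x^{k-1})^\top Q\nabla c(x^{k-1})d + d^\top\nabla^2(y^{k-1}c)(x^{k-1})d$, which is positive on $\R^n\setminus\set{0}$ by \Cref{lem:smoothposdeflocally}. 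Quadratic convergence of $(x^k,y^k)\to(\bx,\by)$ is inherited from the classical Newton statement already used. The main subtlety is the bookkeeping to ensure that the neighborhood $\cN$ simultaneously supports quadratic convergence, keeps the linearized points in $\intr\dom h$, and preserves the uniform positive-definiteness of the reduced Hessian; all three are open conditions at $(\bx,\by)$, so a single sufficiently small $\cN$ works.
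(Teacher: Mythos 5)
Your proposal is correct and takes essentially the same route the paper intends: the paper leaves this theorem's proof to the discussion preceding it (nonsingularity of $\nabla g(\bx,\by)$ from the second-order condition, classical quadratic convergence of \eqref{eq:smoothNewton}, and \Cref{lem:smoothposdeflocally} plus the appendix lemmas for (a)--(d)), and you have simply filled in those details. The only cosmetic slip is writing $\cM_{\bc}=\ri{C_{k_0}}=\intr{\dom{h}}$ — in general $\intr{C_{k_0}}$ is a proper subset of $\intr{\dom{h}}$ when $\cK$ has several pieces — but since $\cK(\bc)=\set{k_0}$ forces a neighborhood of $\bc$ in $\dom{h}$ to lie in $C_{k_0}$, the two sets agree locally near $\bc$, which is all your argument uses.
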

\section{Acknowledgement}
The authors thank Asen Dontchev for helpful discussions of the paper \cite{cibulka2016strong}.
\bibliographystyle{abbrv}
\bibliography{MOR_PLQ/MOR_PLQ.bib}
\section{Appendix}
\begin{lemma}
\label{lem:cqcvxsetlinmap}
	Suppose $C\subset \R^m$ is a nonempty, closed, convex set and $A\in \R^{n\times m}$. Consider the following equations:
\begin{align*}
	\tag{a}\Null{A} \cap \ri{C}&=\set{\by},\\
	\tag{b}\Null{A}\cap \parr{C} &= \{0\},\\
	\tag{c}\Null{A} \cap C &= \set{\by}.
\end{align*}
Then 
\(
	(a)\Longrightarrow (b)\Longrightarrow (c).
\)
\end{lemma}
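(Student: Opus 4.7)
The plan is to prove the two implications separately, both of which reduce to elementary facts about relative interiors of convex sets.

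For (a) $\Rightarrow$ (b), I would take an arbitrary $v \in \Null{A} \cap \parr{C}$ and show $v = 0$. The assumption (a) tells us in particular that $\by \in \ri{C}$. A standard fact from convex analysis (e.g.\ \cite[Theorem 6.1]{rockafellar2015convex}) is that $\ri{C}$ is open in $\aff{C}$, so for any direction $v \in \parr{C}$ there exists $\delta > 0$ with $\by + tv \in \ri{C}$ for all $|t| < \delta$. For such $t > 0$ we also have $A(\by + tv) = A\by + tAv = 0$, hence $\by + tv \in \Null{A} \cap \ri{C}$. The uniqueness in (a) then forces $\by + tv = \by$, which gives $v = 0$.

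For (b) $\Rightarrow$ (c), the natural reading of the lemma is that the point $\by$ appearing in (a) and (c) is the same; thus $\by$ is inherited from (a) as a member of $\ri{C} \cap \Null{A} \subset C \cap \Null{A}$, supplying the existence half of (c). For uniqueness, suppose $\by' \in \Null{A} \cap C$. Then $\by' - \by \in C - C \subset \parr{C}$ and $\by' - \by \in \Null{A}$, so (b) yields $\by' - \by = 0$. Hence $\Null{A} \cap C = \{\by\}$.

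The only nontrivial step is the first implication, and the single piece of technology it uses is the relative openness of $\ri{C}$ inside $\aff{C}$, which lets one perturb $\by$ in any direction of $\parr{C}$ while staying in $\ri{C}$. There is no genuine obstacle; the argument is essentially a one-line perturbation contradicting the singleton hypothesis.
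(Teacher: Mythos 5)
Your proof is correct. For $(b)\Rightarrow(c)$ you and the paper give essentially the same one-line argument (uniqueness via $y_1-y_2\in\Null{A}\cap\parr{C}=\{0\}$, with existence of $\by$ implicitly inherited from the setting); you are slightly more explicit than the paper about why $\by$ is available, which is a small but real improvement in clarity. For $(a)\Rightarrow(b)$, however, your route is genuinely different and simpler than the paper's. You invoke directly the fact that $\ri{C}$ is relatively open in $\aff{C}$: for $v\in\parr{C}$, the point $\by+tv$ lies in $\aff{C}$ for all $t$ and hence in $\ri{C}$ for small $|t|$, while $v\in\Null{A}$ keeps $\by+tv$ in $\Null{A}$, contradicting the singleton hypothesis unless $v=0$. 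The paper instead builds an explicit conical spanning set of $\parr{C}$ by directions $y_i-\by$ with $y_i\in\ri{C}$ (using \cite[Theorem 6.4]{rockafellar2015convex} to flip signs), writes an arbitrary $z\in\parr{C}$ as a nonnegative combination, normalizes to a convex combination, and lands at $\by+tz\in\ri{C}$ via \cite[Theorem 6.1]{rockafellar2015convex}. Both arguments rest on the same underlying principle --- you can perturb a relative-interior point in any direction of $\parr{C}$ and stay in $\ri{C}$ --- but your formulation applies it at the right level of abstraction and avoids the auxiliary spanning-set construction. There is no gap in your argument; if anything, it is the cleaner of the two.
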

\begin{proof}
	$[(a)\Rightarrow(b)]$
Since $\by\in C$, there exists an integer $n\geq1$ and points $\{y_1,\dotsc,y_n\}\subset \aff{C}$ that
	\(
		\mathrm{span}\set{y_i-\bar{y}}_{i=1}^n = \parr{C}.
	\)
	By convexity and the assumption $\bar{y}\in \ri{C}$, we can further assume $\{y_1,\dotsc,y_n\}\subset \ri{C}$.
	By \cite[Theorem 6.4]{rockafellar2015convex}, there exists $\set{z_1,\dotsc,z_n}\subset\ri{C}$ and $t_i>0$ such that, for all $i\in\set{1,\dotsc,n}$,
	\(
		y_i - \by  = -t_i(z_i-\by).
	\)
	Then, after relabeling, we may suppose $\{y_1,\dotsc,y_n\}\subset\ri{C}$ satisfies
	\begin{equation}
	\label{eq:parrcone}	
		\parr{C} =\bset{\sum_{i=1}^n \mu_i (y_i-\by)}{\mu_i\ge0, i\in\set{1,\dotsc,n}}
	\end{equation}
	Now suppose (b) does not hold. Then, there exists $0\ne z\in \Null{A}\cap \parr{C}$. By \eqref{eq:parrcone},
	\(
		z = \sum_i \mu_i(y_i-\by)
	\)
	with $\mu_i\ge0$ and $\sum_i\mu_i\ne0$. Define $t := \frac{1}{\sum_i \mu_i}$, and for $i\in\set{1,\dotsc,n}$, define $\lambda_i:= t\mu_i$. Then $\lambda_i\ge0$ for all $i\in\set{1,2\dotsc,n}$, with $\sum_{i=1}^n \lambda_i = 1$. Then by \cite[Theorem 6.1]{rockafellar2015convex}
	\(
		\bar{y} + tz = \bar{y} + \sum_i\lambda_i (y_i - \bar{y}) = \sum_i \lambda_i y_i \in \ri{C}.
	\)
	But then $\bar{y}$ and $\bar{y} + tz$ are two points in $\Null{A} \cap \ri{C}$, so (b) must hold.
	\\
	$[(b)\Rightarrow(c)]$
	Suppose $(b)$ and that there exists $y_1,y_2\in \Null{A} \cap C$. Then $y_1-y_2\in \Null{A}\cap \parr{C}=\set{0}$, so $y_1=y_2$. 
\end{proof}
\begin{theorem}[Normals Cones to Sets with Constraint Structure] \cite[Theorem 6.14]{rockafellar_wets_1998}
\label{thm:cq}
	Let
	\(
		C = \bset{x\in X}{F(x)\in Z}
	\)
	for closed convex sets $X\subset\R^n$ and $Z\subset\R^m$ and a $\cC^1$-mapping $F:\R^n\to\R^m$. Suppose $\bx\in C$ satisfies the constraint qualification
	\begin{equation}\label{eq:rtrCQ}
		[y\in \ncone{F(\bx)}{Z},\ -\nabla F(\bx)^\top y \in \ncone{\bx}{X}] \Longleftrightarrow y=(0,\dotsc,0).
	\end{equation}
	Then
	\(
		\ncone{\bx}{C}=\bset{\nabla F(\bx)^\top y + v}{y\in \ncone{F(\bx)}{Z}, v\in \ncone{\bx}{X}}.
	\)
\end{theorem}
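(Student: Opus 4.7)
The plan is to derive this normal cone formula by applying the subdifferential calculus of Rockafellar--Wets to the indicator function representation $\delta_C = \delta_X + \delta_Z \circ F$. Both $\delta_X$ and $\delta_Z$ are closed, proper, convex with $\partial \delta_X(\bx) = \ncone{\bx}{X}$ and $\partial \delta_Z(F(\bx)) = \ncone{F(\bx)}{Z}$, so that the target identity is equivalent to $\partial \delta_C(\bx) = \nabla F(\bx)^\top \partial \delta_Z(F(\bx)) + \partial \delta_X(\bx)$.

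For the easy inclusion "$\supset$", I would argue directly from the definition of the regular normal cone \eqref{eq:limitingnormals}. Fix $y \in \ncone{F(\bx)}{Z}$ and $v \in \ncone{\bx}{X}$, and let $x \in C$ be near $\bx$. Convexity of $Z$ gives $\ip{y}{F(x)-F(\bx)} \le 0$, so a first-order Taylor expansion of $F$ yields $\ip{\nabla F(\bx)^\top y}{x-\bx} \le o(\|x-\bx\|)$; convexity of $X$ with $x\in X$ gives $\ip{v}{x-\bx}\le 0$. Summing exhibits $\nabla F(\bx)^\top y + v \in \hat N(\bx|C) \subset \ncone{\bx}{C}$.

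For the harder inclusion "$\subset$", I would invoke the composite chain rule and sum rule for the limiting (and horizon) subdifferential of $\delta_C = \delta_X + \delta_Z \circ F$. Since $\delta_Z$ is closed convex, $\partial^\infty \delta_Z(F(\bx))=\ncone{F(\bx)}{Z}$ coincides with its limiting subdifferential. The chain rule for $\delta_Z\circ F$ applied at $\bx$, valid under the qualification $[y\in \ncone{F(\bx)}{Z},\, \nabla F(\bx)^\top y = 0]\Rightarrow y=0$, gives
\[
\partial(\delta_Z\circ F)(\bx)\subset \nabla F(\bx)^\top \ncone{F(\bx)}{Z},\quad \partial^\infty(\delta_Z\circ F)(\bx)\subset \nabla F(\bx)^\top \ncone{F(\bx)}{Z}.
\]
Then the sum rule for $\delta_X+(\delta_Z\circ F)$, valid under the qualification $[v\in\ncone{\bx}{X},\ w\in \nabla F(\bx)^\top \ncone{F(\bx)}{Z},\ v+w=0]\Rightarrow v=w=0$, delivers $\ncone{\bx}{C}\subset \ncone{\bx}{X}+\nabla F(\bx)^\top \ncone{F(\bx)}{Z}$. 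Setting $v=-\nabla F(\bx)^\top y$ in the second qualification shows the two qualifications together are exactly the single hypothesis \eqref{eq:rtrCQ}.

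The main obstacle is bookkeeping: one must carefully verify that the combined chain-rule plus sum-rule qualification collapses to the single symmetric form \eqref{eq:rtrCQ}. This amounts to checking that \eqref{eq:rtrCQ} simultaneously implies (i) the horizon subdifferential of $\delta_Z\circ F$ at $\bx$ is nontrivially unconstrained only for $y=0$, and (ii) that no nonzero pair $(v,w)$ from the two summands sums to zero; both are elementary once one unfolds what the qualification says about pairs $(y,v)$ with $v=-\nabla F(\bx)^\top y$. The inclusion "$\supset$" established first then closes the argument, yielding the stated equality.
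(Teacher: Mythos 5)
The paper does not prove this statement: it is quoted verbatim as \cite[Theorem 6.14]{rockafellar_wets_1998} and used as a black box, so there is no internal proof to compare against. Your argument is correct and is essentially the textbook proof of that theorem, transposed from the normal-cone calculus to the subdifferential calculus via $\delta_C=\delta_X+\delta_Z\circ F$: the direct computation for ``$\supset$'' correctly lands in $\hat N(\bx|C)\subset\ncone{\bx}{C}$ (note $C$ need not be convex, so the limiting cone is the right target), and your bookkeeping for ``$\subset$'' is sound --- \eqref{eq:rtrCQ} implies the chain-rule qualification (take $v=0\in\ncone{\bx}{X}$) and the sum-rule qualification (any cancelling pair has $v=-\nabla F(\bx)^\top y$, forcing $y=0$ and hence $v=0$), using that $\partial^\infty\delta_X(\bx)=\ncone{\bx}{X}$ and $\partial^\infty(\delta_Z\circ F)(\bx)\subset\nabla F(\bx)^\top\ncone{F(\bx)}{Z}$. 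Two cosmetic remarks: within Rockafellar--Wets the Chapter~10 chain and sum rules are themselves derived from the Chapter~6 normal-cone calculus, so your route is logically fine as a free-standing argument but runs ``backwards'' relative to that book's development; and the two-step decomposition can be collapsed into a single application of the preimage rule by writing $C=\widetilde F^{-1}(X\times Z)$ with $\widetilde F(x)=(x,F(x))$, for which the qualification is exactly \eqref{eq:rtrCQ}.
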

\begin{definition}[The model function at $\hx$]
	\label{def:appendixmodelfn}
Let $f$ be as in \ref{theprogram} and $\hx\in\dom{f}$. Suppose $f$ satisfies \eqref{eq:bcq} at $\hx$. Define $\bm u := (\hx,\hy),\ \hat H:=\nabla^2(\hy c)(\hx)$,
\[
	\psi(v,w):=h(v) + w,\text{ and }\Phi_{\bm u}(d) := \begin{pmatrix}
	c(\hx)+\nabla c(\hx)d \\ \frac{1}{2}d^\top \hat H d
	\end{pmatrix}.
\]
Then, for any $(v,w)\in \dom{h}\times\R$ and $(d,s)\in \R^n\times\R$,
\[
\nabla \Phi_{\bm u}(d) = \begin{pmatrix}
\nabla c(\hx) \\ d^\top \hat H
\end{pmatrix},\quad
\psi'((v,w);(d,s)) = h'(v;d)+s,\quad
\psi''((v,w);(d,s))=h''(v;d).
\]
Set
\(
	\phi_{\bm u}(d) := \psi(\Phi_{\bm u}(d)) = h(c(\hx)+\nabla c(\hx)d) + \frac{1}{2}d^\top \hat Hd.
\)
	By \Cref{thm:rockafellarplqcomposite}, $\phi_{\bm u}$ is piecewise linear-quadratic, though not necessarily convex because $\hat H$ may not be positive semi-definite. However, $\phi_{\bm u}$ is convex-composite with $\psi$ piecewise linear-quadratic convex. 
\end{definition}	
The following lemma shows that if $f$ satisfies \eqref{eq:bcq} at $\hx$, then the model function at $\hx$ satisfies its \eqref{eq:bcq} throughout its domain.
\begin{lemma}
	Let $f$ be as in \ref{theprogram}, and suppose $f$ satisfies \eqref{eq:bcq} at $\hx$. Then, $\phi_{\bm u}$ given in \Cref{def:appendixmodelfn} satisfies \eqref{eq:bcq} at all points 
	\(
		\bd\in\dom{\phi_{\bm u}}=\bset{d}{c(\hx)+\nabla c(\hx)d\in\dom{h}}.
	\)
\end{lemma}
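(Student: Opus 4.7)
The plan is to unwind the basic constraint qualification for the convex-composite representation $\phi_{\bm u}=\psi\circ\Phi_{\bm u}$ and then reduce the desired condition at an arbitrary $\bar d\in\dom{\phi_{\bm u}}$ back to the given (BCQ) at $\hat x$ by a one-line convexity argument that exploits the fact that the same matrix $\nabla c(\hat x)$ appears in both the Jacobian of $\Phi_{\bm u}$ and the null-space condition.

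First I would write (BCQ) for $\phi_{\bm u}$ at $\bar d$ explicitly. Since $\dom{\psi}=\dom{h}\times\R$, we have
\[
\ncone{\Phi_{\bm u}(\bar d)}{\dom{\psi}}=\ncone{c(\hat x)+\nabla c(\hat x)\bar d}{\dom{h}}\times\{0\},
\]
and $\nabla\Phi_{\bm u}(\bar d)^\top(y,s)=\nabla c(\hat x)^\top y+s\hat H\bar d$. The condition $\Null{\nabla\Phi_{\bm u}(\bar d)^\top}\cap\ncone{\Phi_{\bm u}(\bar d)}{\dom{\psi}}=\{0\}$ therefore forces $s=0$, so it collapses to the claim
\[
y\in\Null{\nabla c(\hat x)^\top}\cap\ncone{c(\hat x)+\nabla c(\hat x)\bar d}{\dom{h}}\;\Longrightarrow\;y=0.
\]

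To prove this implication, set $c_1:=c(\hat x)+\nabla c(\hat x)\bar d\in\dom{h}$ and take any such $y$. For any $c\in\dom{h}$ I would split
\[
\ip{y}{c-c(\hat x)}=\ip{y}{c-c_1}+\ip{y}{c_1-c(\hat x)}=\ip{y}{c-c_1}+\ip{\nabla c(\hat x)^\top y}{\bar d}.
\]
The first term is nonpositive because $y\in\ncone{c_1}{\dom{h}}$, and the second term vanishes because $y\in\Null{\nabla c(\hat x)^\top}$. Hence $\ip{y}{c-c(\hat x)}\le0$ for every $c\in\dom{h}$, i.e.\ $y\in\ncone{c(\hat x)}{\dom{h}}$. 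The assumed (BCQ) at $\hat x$ then yields $y=0$.

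There is no real obstacle here: the argument only uses convexity of $\dom{h}$ (polyhedrality is not needed) and the algebraic identity $\nabla c(\hat x)^\top y=0$. The one subtlety worth flagging in the written proof is the book-keeping for the extra coordinate introduced by the quadratic term $\tfrac12 d^\top\hat H d$ in $\Phi_{\bm u}$, which contributes nothing to the normal-cone side because $\dom{\psi}$ is a product with $\R$; this is precisely what makes the quadratic perturbation invisible to (BCQ).
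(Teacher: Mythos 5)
Your proof is correct and follows essentially the same route as the paper: both compute $\ncone{\Phi_{\bm u}(\bd)}{\dom{\psi}}=\ncone{c(\hx)+\nabla c(\hx)\bd}{\dom{h}}\times\set{0}$ to force the second component to vanish, and then use the orthogonality $\ip{v_1}{\nabla c(\hx)\bd}=0$ (from $v_1\in\Null{\nabla c(\hx)^\top}$) to transfer $v_1$ from $\ncone{c(\hx)+\nabla c(\hx)\bd}{\dom{h}}$ into $\ncone{c(\hx)}{\dom{h}}$, at which point \eqref{eq:bcq} at $\hx$ finishes. The only difference is that you spell out the one-line convexity estimate that the paper leaves implicit.
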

\begin{proof}
	Let $\bd\in \bset{d}{c(\hx)+\nabla c(\hx)d\in\dom{h}}$. By definition,
	\begin{align*}
		\Null{\nabla \Phi_{\bm u}(\bd)^\top}\!=\!\Null{\begin{pmatrix}\nabla c(\hx)^\top & \hat H d\end{pmatrix}}\text{ and }
		\ncone{\Phi_{\bm u}(\bd)}{\dom{\psi}}\!=\!\ncone{c(\hx)+\nabla c(\hx)\bd}{\dom{h}}\!\times\!\set{0}.
	\end{align*}
	Suppose $v=(v_1,v_2)\in 	\Null{\nabla \Phi_{\bm u}(\bd)^\top}\cap \ncone{\Phi_{\bm u}(\bd)}{\dom{\psi}} $. Then $v_2=0$, and
	\[
		v_1\in \Null{\nabla c(\hx)^\top}\cap \ncone{c(\hx)+\nabla c(\hx)\bd}{\dom{h}} \subset \Null{\nabla c(\hx)^\top}\cap  \ncone{c(\hx)}{\dom{h}} = \set{0},
	\]
	where the inclusion follows since $\ip{v_1}{\nabla c(\hx)\bd}=0$.
\end{proof}
\begin{lemma}
	\label{lem:modelsosc}	
	Let $\phi_{\bm u}$ be as in \Cref{def:appendixmodelfn}, and suppose $f$ satisfies \eqref{eq:bcq} at $\hx$. Consider the problem
	\begin{mini}
		{d}{\phi_{\bm u}(d)}{\tag{$\cP_{\phi_{\bm u}}$}}{\label{opt:modelsubproblem}}
	\end{mini}
Then, the cone of non-ascent directions $D_{\phi_{\bm u}}(\bd)$ at any $\bd\in\dom{\phi_{\bm u}}$ is given by
\begin{equation}
\label{eq:modelnonascent}
	D_{\phi_{\bm u}}(\bd)=\bset{\delta}{h'(c(\hx)+\nabla c(\hx)\bd;\nabla c(\hx)\delta) + \bd^\top \hat H\delta\le0}.
\end{equation}
Moreover, the second-order necessary and sufficient conditions of \Cref{thm:plqsonsc} applied to $\phi_{\bm u}$ are
\begin{enumerate}
	\item If $\phi_{\bm u}$ has a local minimum at $\bd$, then $0\in \hat H\bd + \nabla c(\hx)^\top \sd h(c(\hx)+\nabla c(\hx)\bd)$ and
	\[
		h''(c(\hx)+\nabla c(\hx)\bd;\nabla c(\hx)\delta) + \delta^\top \hat H \delta \ge 0,
	\]
	for all $\delta\in D_{\phi_{\bm u}}(\bd)$.
	\item If $0\in H\bd + \nabla c(\hx)^\top \sd h(c(\hx)+\nabla c(\hx)\bd)$ and
	\[
		h''(c(\hx)+\nabla c(\hx)\bd;\nabla c(\hx)\delta) + \delta^\top \hat H \delta > 0,	
	\]
	for all $\delta\in D_{\phi_{\bm u}}(\bd)\setminus\set{0}$, then $\bd$ is a strong local minimizer of $\phi_{\bm u}$.
\end{enumerate}
\end{lemma}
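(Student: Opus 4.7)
My plan is to view $\phi_{\bm u} = \psi \circ \Phi_{\bm u}$ as a convex-composite function, with PLQ convex outer function $\psi(v,w) = h(v) + w$ and $\cC^2$-smooth inner map $\Phi_{\bm u}$, and then simply invoke \Cref{thm:cvxcompfonc} and \Cref{thm:plqsonsc}. The preceding lemma (and its proof, which I would use verbatim) ensures that $\phi_{\bm u}$ satisfies \eqref{eq:bcq} at every $\bd\in\dom{\phi_{\bm u}}$, so these two theorems apply directly. The only substantive content is the chain-rule bookkeeping to translate their conclusions into $h,\nabla c(\hx),\hat H$.

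For the first claim I would apply the subderivative portion of \Cref{thm:cvxcompfonc} to $\phi_{\bm u}$ at $\bd$, giving $\dif \phi_{\bm u}(\bd)(\delta) = \psi'(\Phi_{\bm u}(\bd);\nabla\Phi_{\bm u}(\bd)\delta)$. Reading off from \Cref{def:appendixmodelfn} that $\nabla\Phi_{\bm u}(\bd)\delta = (\nabla c(\hx)\delta,\ \bd^\top \hat H\delta)$ and $\psi'((v,w);(d,s)) = h'(v;d)+s$, this expression equals $h'(c(\hx)+\nabla c(\hx)\bd;\nabla c(\hx)\delta) + \bd^\top \hat H\delta$. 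The formula \eqref{eq:modelnonascent} is then immediate from $D_{\phi_{\bm u}}(\bd)=\{\delta:\dif\phi_{\bm u}(\bd)(\delta)\le 0\}$.

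For the second-order statements I would apply \Cref{thm:plqsonsc} to $\phi_{\bm u}$ at $\bd$. Three ingredients need to be identified explicitly. \emph{(i)} Since $\psi$ decomposes additively, $\sd\psi(v,w) = \sd h(v)\times\{1\}$, and then $\nabla\Phi_{\bm u}(\bd)^\top(y,1) = \nabla c(\hx)^\top y + \hat H \bd$. Hence the stationarity condition $0\in\nabla\Phi_{\bm u}(\bd)^\top \sd\psi(\Phi_{\bm u}(\bd))$ becomes exactly $0\in \hat H\bd + \nabla c(\hx)^\top \sd h(c(\hx)+\nabla c(\hx)\bd)$. \emph{(ii)} The multiplier set $M_{\phi_{\bm u}}(\bd)$ consists of pairs $(y,1)$ with $y$ ranging over $\Null{\nabla c(\hx)^\top}$-affine-shift intersected with $\sd h(c(\hx)+\nabla c(\hx)\bd)$; since the $w$-component of every multiplier is the fixed constant $1$, the associated Lagrangian Hessian in $d$ is simply $\hat H$ (the linear-in-$d$ piece $y^\top[c(\hx)+\nabla c(\hx)d]$ contributes zero second derivative and $\frac{1}{2} d^\top\hat H d$ contributes $\hat H$). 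So the $\max$ in \eqref{eq:secondderivHessianexpansion} collapses to $\delta^\top \hat H\delta$ independent of the multiplier. \emph{(iii)} Again by the additive form of $\psi$, $\psi''(\Phi_{\bm u}(\bd);\nabla\Phi_{\bm u}(\bd)\delta) = h''(c(\hx)+\nabla c(\hx)\bd;\nabla c(\hx)\delta)$, as recorded in \Cref{def:appendixmodelfn}. Substituting these three identifications into parts (a) and (b) of \Cref{thm:plqsonsc} yields the stated necessary and sufficient conditions.

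There is no real obstacle: the argument is a routine chain-rule unpacking, and the only thing to be careful about is that the Hessian of the Lagrangian for $\phi_{\bm u}$ really is $\hat H$ (the $w$-slot multiplier being always $1$) and that \Cref{thm:plqsonsc} applies, which was guaranteed by the preceding lemma verifying \eqref{eq:bcq} for $\phi_{\bm u}$. Strictly speaking the theorem requires $\Phi_{\bm u}$ to be $\cC^2$, which it is since its components are polynomials in $d$.
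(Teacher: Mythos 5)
Your proposal is correct and follows the same route as the paper: use the preceding lemma to get \eqref{eq:bcq} for $\phi_{\bm u}$ throughout its domain, invoke \Cref{thm:cvxcompfonc} for the subderivative formula, and then apply \Cref{thm:plqsonsc} after identifying the multiplier set $M_{\phi_{\bm u}}(\bd) = \Null{\begin{pmatrix}\nabla c(\hx)^\top & \hat H\bd\end{pmatrix}} \cap (\sd h(c(\hx)+\nabla c(\hx)\bd)\times\{1\})$ and noting that $\nabla^2(y\Phi_{\bm u})(\bd) = y_2\hat H = \hat H$ since $y_2=1$. The chain-rule bookkeeping matches the paper's line by line.
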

\begin{proof}
	Since \eqref{eq:bcq} is satisfied at all points $d\in\dom{\phi_{\bm u}}$, the chain rule of \Cref{thm:cvxcompfonc} gives
	\[
		\sd \phi_{\bm u}(d) = \hat Hd + \nabla c(\hx)^\top\sd h(c(\hx)+\nabla c(\hx)d),
	\]
	\[
		\dif \phi_{\bm u}(d)(\delta) =  h'(c(\hx)+\nabla c(\hx)d;\nabla c(\hx)\delta) + d^\top \hat H \delta,
	\]
	which is \eqref{eq:modelnonascent}. The set of Lagrange multipliers for $\phi_{\bm u}$ becomes
	\begin{equation}
	\label{eq:modelmultiplierset}
	\begin{aligned}
		M_{\phi_{\bm u}}(d) &:= \Null{\nabla \Phi_{\bm u}(d)^\top}\cap \sd \psi(\Phi_{\bm u}(d))\\&=\Null{\begin{pmatrix}\nabla c(\hx)^\top & \hat H d\end{pmatrix}}\cap(\sd h(c(\hx)+ \nabla c(\hx)d)\times\set{1}),
	\end{aligned}
	\end{equation}
	so that
	\(
		\begin{pmatrix}
		y_1&y_2
		\end{pmatrix}\in M_{\phi_{\bm u}}(d) \Longleftrightarrow \begin{cases}
		\hat Hd + \nabla c(\hx)^\top y_1,\ 
		y_1\in\sd h(c(\hx)+\nabla c(\hx)d),\ 
		y_2=1.
		\end{cases}
	\)
	The Lagrangian \cite{burke1987second} is 
	\(
		L(d,y) := \ip{y}{\Phi_{\bm u}(d)} - \psi^\star(y),\quad y=(y_1,y_2)\in \R^m\times\R,
	\) with $\nabla^2(y\Phi_{\bm u})(d)=y_2\hat H$. Then, from \Cref{thm:plqsonsc}, for any $\delta\in\R^n$,
	{\small\[
		\psi''(\Phi_{\bm u}(d);\nabla \Phi_{\bm u}(d)\delta) + \max\bset{\ip{\delta}{\nabla^2(y\Phi_{\bm u})(d)\delta}}{y\in M_{\phi_{\bm u}}(d)}=
		h''(c(\hx)+\nabla c(\hx)\bd;\nabla c(\hx)\delta) + \delta^\top \hat H \delta.
	\]}
\end{proof}
The following lemma relates an active manifold at a solution to \ref{theprogram} to the directions of non-ascent for the model function \Cref{def:appendixmodelfn}. It is an immediate consequence of \Cref{thm:ps}, \Cref{lem:D(x)andA^Tc'(x)}, and \eqref{eq:modelnonascent}, and the proof is identical to \Cref{lem:D(x)andA^Tc'(x)}.
\begin{lemma}[Model non-ascent directions]
	\label{lem:modelnonascent}
	Let $f$ be as in \ref{theprogram}, $\bx\in\dom{f},\ \bc:=c(\bx),\ \bk=|\cK(\bc)|$, where $\cK(\bc)$ are the active indices given in \Cref{def:activeidx}. Let $(\hx,\hy)$ and $\phi_{\bm u}$ be as in \Cref{def:appendixmodelfn}, and let the active manifold $\cM_{\bc}$ be as in \eqref{eq:manifold}, with $\dom{h}$ given by the Rockafellar-Wets PLQ representation theorem. Suppose $0 = \hat H \bd + \nabla c(\hx)^\top \by,\ c(\hx)+ \nabla c(\hx)\bd\in\cM_{\bc}$, and $\by\in\ri{\sd h(c(\hx)+\nabla c(\hx)\bd)}$. Then, $\phi_{\bm u}$ satisfies \eqref{eq:strictcriticality} at $\bd$ for $(\by,1)$, and
		
	if $\bk\ge2$, then, in the notation of \Cref{def:AmatrixPs},
	$D_{\phi_{\bm u}}(\bd) = \Null{A^\top \nabla c(\hx)}$.
	
	if $\bk=1$,
	then, 
	$D_{\phi_{\bm u}}(\bd) = \R^n$.
\end{lemma}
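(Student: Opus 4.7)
The plan is to parallel the proof of \Cref{lem:D(x)andA^Tc'(x)} verbatim, with $f$ replaced by the convex-composite model function $\phi_{\bm u}=\psi\circ\Phi_{\bm u}$, and to treat the cases $\bk\ge 2$ and $\bk=1$ separately. The outer function $\psi$ is trivially piecewise linear-quadratic convex, with $\dom{\psi}=\dom{h}\times\R$ and $\sd\psi(v,w)=\sd h(v)\times\set{1}$; the inner map $\Phi_{\bm u}$ is $\cC^2$-smooth; and since $f$ satisfies \eqref{eq:bcq} at $\hx$, the preceding lemma of the appendix shows $\phi_{\bm u}$ satisfies \eqref{eq:bcq} at every point of its domain, in particular at $\bd$. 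Hence the chain-rule formulas for $\sd\phi_{\bm u}(\bd)$ and $\dif\phi_{\bm u}(\bd)(\cdot)$ recorded in \Cref{lem:modelsosc} are available.

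For the strict-criticality claim, I would read off the multiplier set of $\phi_{\bm u}$ at $\bd$ from \eqref{eq:modelmultiplierset}: it consists of the pairs $(y_1,1)$ with $y_1\in\sd h(c(\hx)+\nabla c(\hx)\bd)$ and $\hat H\bd+\nabla c(\hx)^\top y_1=0$. The hypotheses $0=\hat H\bd+\nabla c(\hx)^\top \by$ and $\by\in\ri{\sd h(c(\hx)+\nabla c(\hx)\bd)}$ then put $(\by,1)$ in the relative interior of this set. Uniqueness---i.e.\ equality with $\set{(\by,1)}$ in \eqref{eq:strictcriticality}---reduces, after subtracting $(\by,1)$ from a second candidate, to showing $\Null{\nabla c(\hx)^\top}\cap\parr{\sd h(c(\hx)+\nabla c(\hx)\bd)}=\set{0}$, which follows from \Cref{thm:ps} (applied at $c(\hx)+\nabla c(\hx)\bd\in\cM_{\bc}$ via the $k$-strict complementarity supplied by $\by\in\ri{\sd h(\cdot)}$ together with \Cref{risdiffstrict}) combined with the nondegeneracy of $\cM_{\bc}$ and the discussion in \Cref{remark:nondegenandtransversality}.

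For $D_{\phi_{\bm u}}(\bd)$ when $\bk\ge 2$, I would plug the subderivative formula \eqref{eq:modelnonascent} into the union-over-pieces representation of \eqref{eq:plqnonascent} for $\phi_{\bm u}$ at $\bd$ and copy the two-direction argument of \Cref{lem:D(x)andA^Tc'(x)} word for word, with $(\by,1)$ and $\nabla\Phi_{\bm u}(\bd)$ playing the roles of $\by$ and $\nabla c(\bx)$. The key structural input is that by \Cref{activepolyhedraequal}, \Cref{Mfacialrep}, and \Cref{def:AmatrixPs}, the matrix $A$ attached to $\cM_{\bc}$ is the same at the shifted point $c(\hx)+\nabla c(\hx)\bd$ as at $\bc$; the extra ``$\bd^\top\hat H\delta$'' term produced by the auxiliary row of $\nabla\Phi_{\bm u}(\bd)^\top$ is absorbed by the stationarity $\hat H\bd=-\nabla c(\hx)^\top\by$, and the ``$A$-block'' in $A_\psi=\begin{pmatrix}A\\ 0\end{pmatrix}$ collapses $A_\psi^\top\nabla\Phi_{\bm u}(\bd)$ to $A^\top\nabla c(\hx)$, yielding $D_{\phi_{\bm u}}(\bd)=\Null{A^\top\nabla c(\hx)}$. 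When $\bk=1$ the active manifold reduces to $\intr{C_{k_0}}$, so $h$ is smooth at $c(\hx)+\nabla c(\hx)\bd$ with the unique subgradient $\by$; together with stationarity this gives $h'(c(\hx)+\nabla c(\hx)\bd;\nabla c(\hx)\delta)+\bd^\top\hat H\delta=\ip{\by}{\nabla c(\hx)\delta}-\ip{\by}{\nabla c(\hx)\delta}=0$ identically in $\delta$, so the non-ascent inequality is automatic and $D_{\phi_{\bm u}}(\bd)=\R^n$.

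The main obstacle is the clean transfer of the active-manifold bookkeeping from the base point $\bc$ to the shifted point $c(\hx)+\nabla c(\hx)\bd$: one has to invoke \Cref{activepolyhedraequal} and \Cref{Mfacialrep} to identify the common matrix $A$, multipliers $\bmu_j$, and sign patterns $P_j$ across $\cM_{\bc}$, so that the subgradient decomposition $y=\lambda_0(c)+\bar A\cU(c)$ of \Cref{sdaveraging} and the positive perturbation trick ``$\by+\epsilon Aw\in\sd h$'' that drives the $(\subset)$ direction of \Cref{lem:D(x)andA^Tc'(x)} remain valid at the shifted point. Once this is verified, the proof truly is identical to that of \Cref{lem:D(x)andA^Tc'(x)}.
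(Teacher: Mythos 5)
Your proposal is correct and follows exactly the route the paper intends: the paper's own proof consists of the single remark that the argument is ``identical to \Cref{lem:D(x)andA^Tc'(x)}'' via \Cref{thm:ps} and \eqref{eq:modelnonascent}, and you have carried out precisely that transfer (chain rule for $\phi_{\bm u}=\psi\circ\Phi_{\bm u}$, the multiplier set \eqref{eq:modelmultiplierset}, the perturbation $\by+\epsilon Aw$, and the collapse of $A_\psi^\top\nabla\Phi_{\bm u}(\bd)$ to $A^\top\nabla c(\hx)$), including the $\bk=1$ case. The only loose end---that the transversality $\Null{\nabla c(\hx)^\top}\cap\Ran{A}=\set{0}$ is needed at $\hx$ rather than at $\bx$, which is supplied by \Cref{posdeflocally} in the setting where the lemma is invoked---is one the paper glosses over equally.
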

\end{document}